\let\@fnsymbol\@arabic
\tikzset{every picture/.style={line width=0.75pt}} 
\tikzset{
    pattern size/.store in=\mcSize, 
    pattern size = 5pt,
    pattern thickness/.store in=\mcThickness, 
    pattern thickness = 0.3pt,
    pattern radius/.store in=\mcRadius, 
    pattern radius = 1pt
}
\pgfpoint{\mcSize}{\mcSize}}
\numberwithin{equation}{section}
\theoremstyle{plain}
\newtheorem{theorem}{Theorem}[section]
\newtheorem{proposition}[theorem]{Proposition}
\newtheorem{algo}{Algorithm}
\theoremstyle{definition}
\theoremstyle{remark}
\newtheorem*{remark}{Remark}
\newcommand{\abs}[1]{\left\vert#1\right\vert}
\newcommand{\kl}[1]{\left(#1\right)}
\newcommand{\Kl}[1]{\left\{#1\right\}}
\newcommand{\R}{\mathbb{R}} 
\newcommand{\C}{\mathbb{C}}
\newcommand{\Z}{\mathbb{Z}}
\newcommand{\F}{\mathcal{F}}
\newcommand{\FI}{\mathcal{F}^{-1}}
\newcommand{\xv}{\boldsymbol{x}}
\newcommand{\xiv}{\boldsymbol{\xi}}
\newcommand{\sv}{\boldsymbol{s}}
\newcommand{\svjk}{\sv_{j,k}}
\newcommand{\svzz}{\sv_{0,0}}
\newcommand{\phijk}{\phi_{j,k}}
\newcommand{\vphi}{\varphi}
\newcommand{\Tjk}{T_{j,k}}
\newcommand{\Ojk}{\Omega_{j,k}}
\newcommand{\cjk}{c_{j,k}}
\renewcommand{\Im}{I^m}
\newcommand{\Imjk}{I^m_{j,k}}
\newcommand{\Em}{E^m}
\newcommand{\Emjk}{E^m_{j,k}}
\newcommand{\Emzz}{E^m_{0,0}}
\newcommand{\Emp}{E^m_p}
\newcommand{\Rt}{{\R^2}}
\newcommand{\phiw}{\phi_{w}}
\newcommand{\phir}{\phi_{r}}
\newcommand{\phiwd}{\phi_{w}^\delta}
\renewcommand{\mod}{\, \operatorname{mod}}
\newcommand{\OTF}{\operatorname{OTF}}
\title{On Phase Unwrapping via Digital Wavefront Sensors}
\author{
Simon Hubmer\footnote{Johannes Kepler University Linz, Institute of Industrial Mathematics, Altenbergerstra{\ss}e~69, 4040 Linz, Austria, (simon.hubmer@jku.at), Corresponding author.} ,
Victoria Laidlaw\footnote{Johannes Kepler University Linz, Institute of Industrial Mathematics, Altenbergerstra{\ss}e~69, 4040 Linz, Austria, (victoria.laidlaw@indmath.uni-linz.ac.at)} ,
Ronny Ramlau\footnote{Johannes Kepler University Linz, Institute of Industrial Mathematics, Altenbergerstra{\ss}e~69, 4040 Linz, Austria, (ronny.ramlau@jku.at)}\,\,\textsuperscript{,}\footnote{Johann Radon Institute Linz, Altenbergerstra{\ss}e~69, 4040 Linz, Austria, (ronny.ramlau@ricam.oeaw.ac.at)} ,
\\
Ekaterina Sherina\footnote{University of Vienna, Faculty of Mathematics, Oskar Morgenstern-Platz 1, 1090 Vienna, Austria (ekaterina.sherina@univie.ac.at)}\,\,\textsuperscript{,}\footnote{Christian Doppler Laboratory for Mathematical Modeling and Simulation of Next Generations of Ultrasound Devices (MaMSi), Oskar Morgenstern-Platz 1, 1090 Vienna, Austria} ,
Bernadett Stadler\footnote{Johann Radon Institute for Computational and Applied Mathematics, Altenbergerstra{\ss}e~69, 4040 Linz, Austria, (bernadett.stadler@indmath.uni-linz.ac.at)}
}
\begin{document}

\maketitle

\begin{abstract}

In this paper, we derive a new class of methods for the classic 2D phase unwrapping problem of recovering a phase function from its wrapped form. For this, we consider the wrapped phase as a wavefront aberration in an optical system, and use reconstruction methods for (digital) wavefront sensors for its recovery. The key idea is that mathematically, common wavefront sensors are insensitive to whether an incoming wavefront is wrapped or not. However, typical reconstructors for these sensors are optimized to compute smooth wavefronts. Thus, digitally ``propagating'' a wrapped phase through such a sensor and then applying one of these reconstructors results in a smooth unwrapped phase. First, we show how this principle can be applied to derive phase unwrapping algorithms based on digital Shack-Hartmann and Fourier-type wavefront sensors. Then, we numerically test our approach on an unwrapping problem appearing in a free-space optical communications project currently under development, and compare the results to those obtained with other state-of-the-art algorithms.

\smallskip
\noindent \textbf{Keywords.} Phase unwrapping, Digital wavefront sensors, Mathematical imaging



\end{abstract}


\section{Introduction}\label{sect_intro}

In this paper, we consider the classic 2D phase unwrapping problem \cite{Ghiglia_1998}, i.e.,
    \begin{equation}\label{prob_unwrap}
        \text{Given} \quad \phiw(\xiv) :=  \phi(\xiv) \mod 2 \pi \,, \quad \text{find} \quad \phi(\xiv) \,,
    \end{equation}
where the wrapped phase $\phiw$ and the unknown phase $\phi$ are real-valued scalar functions defined for all $\xiv \in \Omega \subseteq \Rt$; see Figure~\ref{fig_example_wrapping} for an example on a disc-shaped domain $\Omega$. For this to be a meaningful problem, one typically aims to compute (reconstruct) a \emph{smooth} unwrapped phase $\phir$ from the given wrapped phase $\phiw$ as an approximation to the original non-wrapped phase $\phi$, in particular without any (or only few) $2\pi$ jumps. The phase unwrapping problem \eqref{prob_unwrap} has applications in numerous fields such as astronomy, biology, or medical imaging, where measurements of the phase of an electric field are often only available in wrapped form. This is because phase information commonly appears in the exponential form $e^{i\phi}$, from which $\phi$ is only determined up to $2\pi$ jumps. Hence, the phase unwrapping problem \eqref{prob_unwrap} can also be equivalently rephrased as
    \begin{equation}\label{prob_unwrap_exp}
        \text{Given} \quad e^{i \phi(\xiv)}  \,, \quad \text{find} \quad \phi(\xiv) \,.
    \end{equation}

\begin{figure}
    \centering
    \includegraphics[trim = {4cm 3.5cm 4cm 3cm}, clip = true, width=\textwidth]{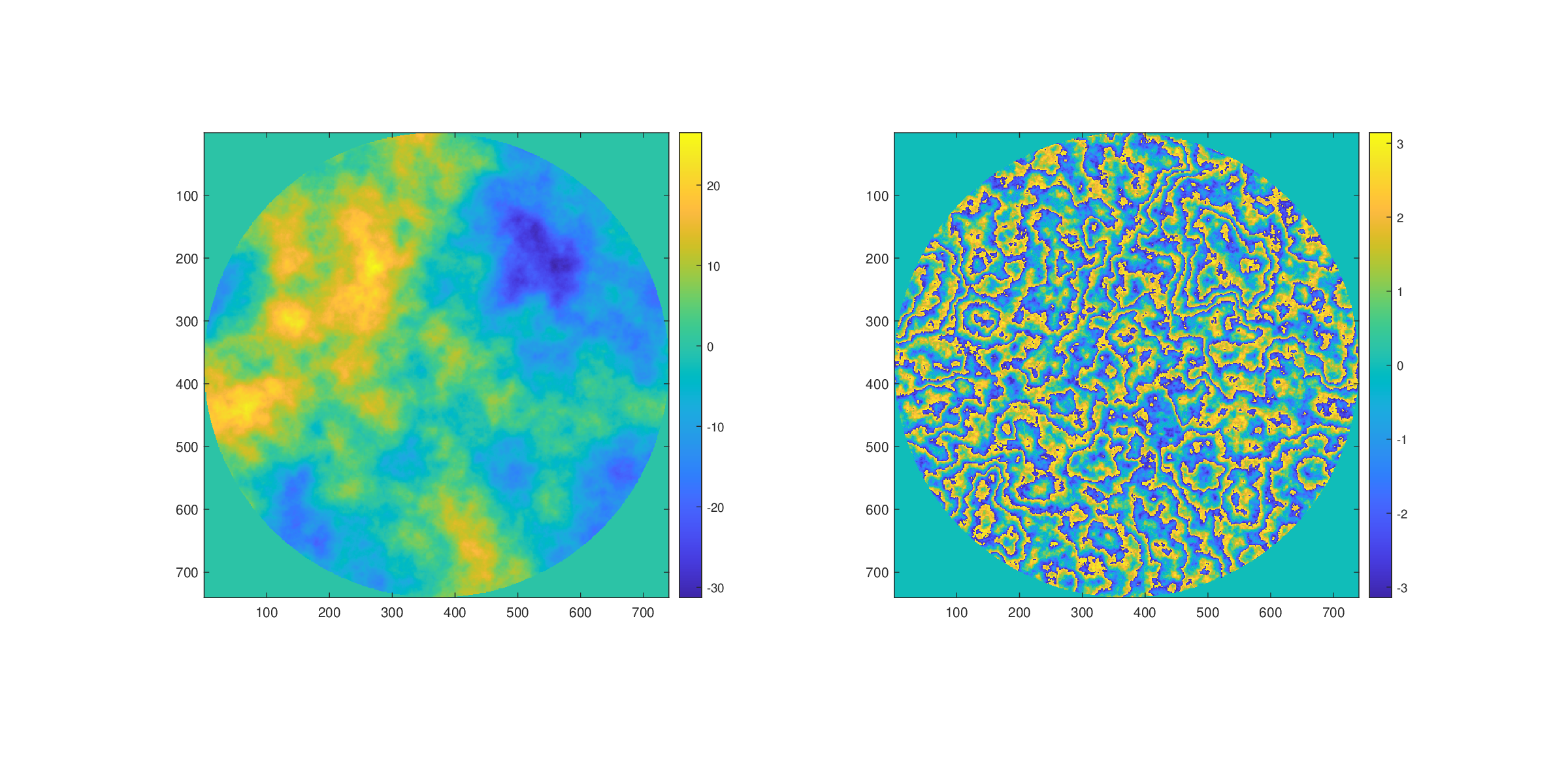}
    \caption{Example of a non-wrapped phase $\phi$ and its wrapped form $\phiw$.}
    \label{fig_example_wrapping}
\end{figure}

Over the years, many different methods have been developed to solve the phase unwrapping problem; see e.g. \cite{Ghiglia_1998,Takeda_1996,Judge_1994,Gens_2003,Kaplan_2007,Huang2022,Wang2022,Zhang2019}. These methods work either in the continuous setting of \eqref{prob_unwrap} and \eqref{prob_unwrap_exp}, or on the actual pixel-discretization of a given wrapped phase, in both cases with the goal to obtain smooth reconstructions without $2\pi$ jumps. In general, one distinguishes between local and global phase unwrapping algorithms. Local methods aim to correct the phase gradient locally, i.e., they follow a certain path and change absolute jumps greater than $\pi$ to their $2\pi$ complement. Such algorithms mainly differ in the choice of the locations where the corrections are applied. A simple strategy is used by the MATLAB built-in function \textit{unwrap()}, which unwraps along the first non-singleton dimension of the phase. More advanced algorithms, such as the one described in \cite{Herraez_2002}, define a reliability function by using a-priori assumptions about the solution, and unwrap the most reliable pixels. Besides local methods, there also exist global phase unwrapping strategies, which aim to reconstruct a phase which satisfies certain a-priori criteria such as smoothness. These are for example based on solving the discrete Poisson equation (PE) \cite{Ghiglia_1989,Ghiglia_1994,Zhao_2020}, or, more recently, on the transport of intensity equation (TIE) \cite{Pandey_2016,Zhao_2019,Martinez_2017}. In recent years, phase unwrapping methods based on deep learning have become popular, see e.g.\ \cite{Huang2022,Wang2022,Zhang2019}. For a more detailed discussion of different phase unwrapping methodologies we refer to Section~\ref{subsect_impl}.

In this paper, we consider a new class of phase unwrapping methods, which are based on mathematical observations of the working principles of certain types of wavefront sensors (WFSs). These sensors are typically used to measure optical wavefront aberrations, and are an integral part of adaptive optics (AO) systems used e.g.\ in astronomy or medical imaging \cite{Roddier_1999,Pircher_Zawadzki_2017}. Within the commonly used framework of Fourier optics, wavefront aberrations $\phi$ usually enter models of imaging systems in exponential factors of the form $e^{i\phi}$ in the Fourier plane of the optical system. Using various optical elements, such as lenslet arrays or pyramidal prisms, wavefront sensors obtain implicit measurements of $\phi$ (e.g.\ its gradient $\nabla_{\xiv} \, \phi$). Using these measurements, the wavefront aberration $\phi$ is then computed via suitable reconstruction algorithms. Crucially, since wavefront aberrations only enter in these sensors via the exponential factor $e^{i\phi}$, the obtained measurements are the same independently of whether $\phi$ contains $2\pi$ jumps or not. This leads directly to the following key idea for phase unwrapping: One can take a wrapped phase $\phiw$ and digitally propagate it through a suitable wavefront sensor, using standard tools of Fourier optics. This provides virtual wavefront measurements, from which one computes $\phi$ using any reconstruction algorithm for the corresponding wavefront sensor. Since these algorithms are designed to produce smooth wavefront reconstructions, the result of this procedure will then be an unwrapped phase $\phi$. The fact that these algorithms are typically optimized with respect to speed and reconstruction quality for aberrations with certain properties then also directly translates to wrapped phases with similar properties.

The aim of this paper is to present the idea of phase unwrapping using digital wavefront sensors outlined above in a detailed and  mathematically rigorous way. In particular, we focus on phase unwrapping using digital Shack-Hartmann and Fourier-type wavefront sensors, which are commonly employed in adaptive optics both in astronomy and ophthalmology. The latter is a very broad class of sensors, and includes for example the well-known 3- and 4-sided pyramid wavefront sensors (PWFS). Both the Shack-Hartmann and Fourier-type wavefront sensors have been successfully used for reconstructing wavefront aberrations resulting from atmospheric turbulence, and thus their digital versions should be well suited for unwrapping atmospheric turbulence profiles. This is relevant e.g.\ in free-space optical communications (FSOC) \cite{Calvo_etal_2019,Chan_2006}, where such turbulence profiles need to be unwrapped repeatedly and accurately. Hence, the numerical experiments presented in this paper have a strong focus on this relevant application problem. In particular, we shall see that our approach matches, and in some cases outperforms, the reconstruction quality of currently used state-of-the-art methods, and this also at a comparable computational cost. 

The outline of this paper is as follows: In Section~\ref{sect_phybackg} we review the necessary physical background on wavefront sensors, which we then use in Section~\ref{sect_digWFS} to introduce a new class of phase unwrapping methods based on digital wavefront sensors. These methods are then numerically tested and compared to state-of-the-art algorithms in Section~\ref{sect_num_exp}, which is followed by a short conclusion in Section~\ref{sect_concl}.

\section{Physical background on wavefront sensors}\label{sect_phybackg}

In this section, we review some necessary background on WFSs, which are a key component of current and upcoming AO systems. In general, AO systems consist of one or more WFSs and deformable mirrors (DMs), and are designed to perform real-time corrections of dynamic wavefront aberrations. The aberrations are corrected by converting wavefront measurements obtained by the WFSs into DM actuator commands such that after reflection on the DMs, an incoming wavefront is restored to its non-aberrated (typically planar) form. For the determination of optimal DM actuator commands, precise phase measurements across the entire wavefront are essential. Hence, reconstructing a wavefront from WFS data and calculating DM actuator commands are two crucial inverse problem in AO. 

WFSs have been commonly applied in astronomical AO systems correcting for wavefront aberrations caused by atmospheric turbulence~\cite{Roddier_1999,RoWe96}. For ground-based telescopes, AO systems are employed to perform diffraction-limited observations. With the new generation of Extremely Large Telescopes (ELTs), the development of advanced AO systems has recently gained much attention by researchers. Additionally, AO systems are implemented to enable FSOC, such as optical laser communications between spacecraft and ground-based stations. There, the goal is to equip future space missions with a communication rate orders of magnitude greater than existing channels.
Furthermore, in ophthalmic AO similar instruments are employed for the early detection of anomalies and  diseases in medical diagnostics. Recently, AO systems have also been applied for high- or super-resolution imaging of biological structure, and function in a wide range of microscopic modalities~\cite{AO_micro_2023}.

Depending on the type of AO instrument and its application, reference light sources are typically required for correction. In astronomy, these can either be astronomical or artificial stars, while in other applications lasers are commonly used. Furthermore, the source of aberrations can also differ between applications. For example, while in astronomical imaging aberrations are associated to atmospheric turbulence~\cite{Roddier_1999,RoWe96}, in retinal imaging distortions of the laser beam are caused by imperfect light propagation through the cornea, lens, and vitreous body of the eye~\cite{Liang97}.

\subsection{The Shack-Hartmann wavefront sensor}\label{subsect_SHWFS}

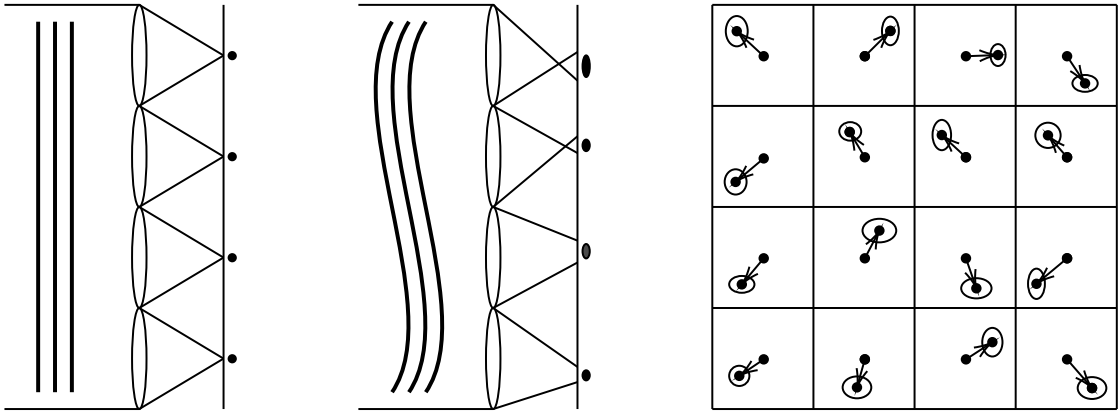
\begin{figure}[ht!]
    \centering
    \begin{tikzpicture}[x=0.75pt,y=0.75pt,yscale=-0.85,xscale=0.85]
    
        \draw  [fill={rgb, 255:red, 0; green, 0; blue, 0 }  ,fill opacity=1 ] (363.01,56.42) .. controls (363.01,52.88) and (363.97,50) .. (365.16,50) .. controls (366.34,50) and (367.3,52.88) .. (367.3,56.42) .. controls (367.3,59.97) and (366.34,62.84) .. (365.16,62.84) .. controls (363.97,62.84) and (363.01,59.97) .. (363.01,56.42) -- cycle ;
        \draw  [fill={rgb, 255:red, 74; green, 74; blue, 74 }  ,fill opacity=1 ] (363,166.28) .. controls (363,163.92) and (363.96,162) .. (365.14,162) .. controls (366.32,162) and (367.28,163.92) .. (367.28,166.28) .. controls (367.28,168.65) and (366.32,170.56) .. (365.14,170.56) .. controls (363.96,170.56) and (363,168.65) .. (363,166.28) -- cycle ;
        \draw  [fill={rgb, 255:red, 0; green, 0; blue, 0 }  ,fill opacity=1 ] (363,240) .. controls (363,238.34) and (363.96,237) .. (365.14,237) .. controls (366.32,237) and (367.28,238.34) .. (367.28,240) .. controls (367.28,241.65) and (366.32,242.99) .. (365.14,242.99) .. controls (363.96,242.99) and (363,241.65) .. (363,240) -- cycle ;
        \draw   (95.71,110) .. controls (95.71,93.43) and (97.63,80) .. (100,80) .. controls (102.37,80) and (104.29,93.43) .. (104.29,110) .. controls (104.29,126.57) and (102.37,140) .. (100,140) .. controls (97.63,140) and (95.71,126.57) .. (95.71,110) -- cycle ;
        \draw   (95.71,170) .. controls (95.71,153.43) and (97.63,140) .. (100,140) .. controls (102.37,140) and (104.29,153.43) .. (104.29,170) .. controls (104.29,186.57) and (102.37,200) .. (100,200) .. controls (97.63,200) and (95.71,186.57) .. (95.71,170) -- cycle ;
        \draw   (95.71,230) .. controls (95.71,213.43) and (97.63,200) .. (100,200) .. controls (102.37,200) and (104.29,213.43) .. (104.29,230) .. controls (104.29,246.57) and (102.37,260) .. (100,260) .. controls (97.63,260) and (95.71,246.57) .. (95.71,230) -- cycle ;
        \draw   (305.71,50) .. controls (305.71,33.43) and (307.63,20) .. (310,20) .. controls (312.37,20) and (314.29,33.43) .. (314.29,50) .. controls (314.29,66.57) and (312.37,80) .. (310,80) .. controls (307.63,80) and (305.71,66.57) .. (305.71,50) -- cycle ;
        \draw   (305.71,110) .. controls (305.71,93.43) and (307.63,80) .. (310,80) .. controls (312.37,80) and (314.29,93.43) .. (314.29,110) .. controls (314.29,126.57) and (312.37,140) .. (310,140) .. controls (307.63,140) and (305.71,126.57) .. (305.71,110) -- cycle ;
        \draw   (305.71,170) .. controls (305.71,153.43) and (307.63,140) .. (310,140) .. controls (312.37,140) and (314.29,153.43) .. (314.29,170) .. controls (314.29,186.57) and (312.37,200) .. (310,200) .. controls (307.63,200) and (305.71,186.57) .. (305.71,170) -- cycle ;
        \draw   (305.71,230) .. controls (305.71,213.43) and (307.63,200) .. (310,200) .. controls (312.37,200) and (314.29,213.43) .. (314.29,230) .. controls (314.29,246.57) and (312.37,260) .. (310,260) .. controls (307.63,260) and (305.71,246.57) .. (305.71,230) -- cycle ;
        \draw  [color={rgb, 255:red, 0; green, 0; blue, 0 }  ,draw opacity=1 ][fill={rgb, 255:red, 0; green, 0; blue, 0 }  ,fill opacity=1 ] (468,50.5) .. controls (468,49.12) and (469.12,48) .. (470.5,48) .. controls (471.88,48) and (473,49.12) .. (473,50.5) .. controls (473,51.88) and (471.88,53) .. (470.5,53) .. controls (469.12,53) and (468,51.88) .. (468,50.5) -- cycle ;
        \draw  [fill={rgb, 255:red, 0; green, 0; blue, 0 }  ,fill opacity=1 ] (528,50.5) .. controls (528,49.12) and (529.12,48) .. (530.5,48) .. controls (531.88,48) and (533,49.12) .. (533,50.5) .. controls (533,51.88) and (531.88,53) .. (530.5,53) .. controls (529.12,53) and (528,51.88) .. (528,50.5) -- cycle ;
        \draw  [fill={rgb, 255:red, 0; green, 0; blue, 0 }  ,fill opacity=1 ] (588,50.5) .. controls (588,49.12) and (589.12,48) .. (590.5,48) .. controls (591.88,48) and (593,49.12) .. (593,50.5) .. controls (593,51.88) and (591.88,53) .. (590.5,53) .. controls (589.12,53) and (588,51.88) .. (588,50.5) -- cycle ;
        \draw  [fill={rgb, 255:red, 0; green, 0; blue, 0 }  ,fill opacity=1 ] (648,50.5) .. controls (648,49.12) and (649.12,48) .. (650.5,48) .. controls (651.88,48) and (653,49.12) .. (653,50.5) .. controls (653,51.88) and (651.88,53) .. (650.5,53) .. controls (649.12,53) and (648,51.88) .. (648,50.5) -- cycle ;
        \draw  [fill={rgb, 255:red, 0; green, 0; blue, 0 }  ,fill opacity=1 ] (468,111.17) .. controls (468,109.79) and (469.12,108.67) .. (470.5,108.67) .. controls (471.88,108.67) and (473,109.79) .. (473,111.17) .. controls (473,112.55) and (471.88,113.67) .. (470.5,113.67) .. controls (469.12,113.67) and (468,112.55) .. (468,111.17) -- cycle ;
        \draw  [fill={rgb, 255:red, 0; green, 0; blue, 0 }  ,fill opacity=1 ] (528,110.5) .. controls (528,109.12) and (529.12,108) .. (530.5,108) .. controls (531.88,108) and (533,109.12) .. (533,110.5) .. controls (533,111.88) and (531.88,113) .. (530.5,113) .. controls (529.12,113) and (528,111.88) .. (528,110.5) -- cycle ;
        \draw  [fill={rgb, 255:red, 0; green, 0; blue, 0 }  ,fill opacity=1 ] (588,110.5) .. controls (588,109.12) and (589.12,108) .. (590.5,108) .. controls (591.88,108) and (593,109.12) .. (593,110.5) .. controls (593,111.88) and (591.88,113) .. (590.5,113) .. controls (589.12,113) and (588,111.88) .. (588,110.5) -- cycle ;
        \draw  [fill={rgb, 255:red, 0; green, 0; blue, 0 }  ,fill opacity=1 ] (648,110.5) .. controls (648,109.12) and (649.12,108) .. (650.5,108) .. controls (651.88,108) and (653,109.12) .. (653,110.5) .. controls (653,111.88) and (651.88,113) .. (650.5,113) .. controls (649.12,113) and (648,111.88) .. (648,110.5) -- cycle ;
        \draw  [fill={rgb, 255:red, 0; green, 0; blue, 0 }  ,fill opacity=1 ] (468,170.5) .. controls (468,169.12) and (469.12,168) .. (470.5,168) .. controls (471.88,168) and (473,169.12) .. (473,170.5) .. controls (473,171.88) and (471.88,173) .. (470.5,173) .. controls (469.12,173) and (468,171.88) .. (468,170.5) -- cycle ;
        \draw  [fill={rgb, 255:red, 0; green, 0; blue, 0 }  ,fill opacity=1 ] (528,170.5) .. controls (528,169.12) and (529.12,168) .. (530.5,168) .. controls (531.88,168) and (533,169.12) .. (533,170.5) .. controls (533,171.88) and (531.88,173) .. (530.5,173) .. controls (529.12,173) and (528,171.88) .. (528,170.5) -- cycle ;
        \draw  [fill={rgb, 255:red, 0; green, 0; blue, 0 }  ,fill opacity=1 ] (588,170.5) .. controls (588,169.12) and (589.12,168) .. (590.5,168) .. controls (591.88,168) and (593,169.12) .. (593,170.5) .. controls (593,171.88) and (591.88,173) .. (590.5,173) .. controls (589.12,173) and (588,171.88) .. (588,170.5) -- cycle ;
        \draw  [fill={rgb, 255:red, 0; green, 0; blue, 0 }  ,fill opacity=1 ] (648,170.5) .. controls (648,169.12) and (649.12,168) .. (650.5,168) .. controls (651.88,168) and (653,169.12) .. (653,170.5) .. controls (653,171.88) and (651.88,173) .. (650.5,173) .. controls (649.12,173) and (648,171.88) .. (648,170.5) -- cycle ;
        \draw  [fill={rgb, 255:red, 0; green, 0; blue, 0 }  ,fill opacity=1 ] (468,230.5) .. controls (468,229.12) and (469.12,228) .. (470.5,228) .. controls (471.88,228) and (473,229.12) .. (473,230.5) .. controls (473,231.88) and (471.88,233) .. (470.5,233) .. controls (469.12,233) and (468,231.88) .. (468,230.5) -- cycle ;
        \draw  [fill={rgb, 255:red, 0; green, 0; blue, 0 }  ,fill opacity=1 ] (528,230.5) .. controls (528,229.12) and (529.12,228) .. (530.5,228) .. controls (531.88,228) and (533,229.12) .. (533,230.5) .. controls (533,231.88) and (531.88,233) .. (530.5,233) .. controls (529.12,233) and (528,231.88) .. (528,230.5) -- cycle ;
        \draw  [fill={rgb, 255:red, 0; green, 0; blue, 0 }  ,fill opacity=1 ] (588,230.5) .. controls (588,229.12) and (589.12,228) .. (590.5,228) .. controls (591.88,228) and (593,229.12) .. (593,230.5) .. controls (593,231.88) and (591.88,233) .. (590.5,233) .. controls (589.12,233) and (588,231.88) .. (588,230.5) -- cycle ;
        \draw  [fill={rgb, 255:red, 0; green, 0; blue, 0 }  ,fill opacity=1 ] (648,230.5) .. controls (648,229.12) and (649.12,228) .. (650.5,228) .. controls (651.88,228) and (653,229.12) .. (653,230.5) .. controls (653,231.88) and (651.88,233) .. (650.5,233) .. controls (649.12,233) and (648,231.88) .. (648,230.5) -- cycle ;
        \draw  [fill={rgb, 255:red, 255; green, 255; blue, 255 }  ,fill opacity=1 ] (448,35.63) .. controls (448,30.66) and (450.91,26.63) .. (454.5,26.63) .. controls (458.09,26.63) and (461,30.66) .. (461,35.63) .. controls (461,40.6) and (458.09,44.63) .. (454.5,44.63) .. controls (450.91,44.63) and (448,40.6) .. (448,35.63) -- cycle ;
        \draw  [fill={rgb, 255:red, 255; green, 255; blue, 255 }  ,fill opacity=1 ] (540.67,35.47) .. controls (540.67,30.77) and (542.91,26.97) .. (545.67,26.97) .. controls (548.43,26.97) and (550.67,30.77) .. (550.67,35.47) .. controls (550.67,40.16) and (548.43,43.97) .. (545.67,43.97) .. controls (542.91,43.97) and (540.67,40.16) .. (540.67,35.47) -- cycle ;
        \draw  [fill={rgb, 255:red, 255; green, 255; blue, 255 }  ,fill opacity=1 ] (605,49.8) .. controls (605,46.21) and (607.01,43.3) .. (609.5,43.3) .. controls (611.99,43.3) and (614,46.21) .. (614,49.8) .. controls (614,53.39) and (611.99,56.3) .. (609.5,56.3) .. controls (607.01,56.3) and (605,53.39) .. (605,49.8) -- cycle ;
        \draw  [fill={rgb, 255:red, 255; green, 255; blue, 255 }  ,fill opacity=1 ] (653.67,66.63) .. controls (653.67,63.87) and (657.02,61.63) .. (661.17,61.63) .. controls (665.31,61.63) and (668.67,63.87) .. (668.67,66.63) .. controls (668.67,69.39) and (665.31,71.63) .. (661.17,71.63) .. controls (657.02,71.63) and (653.67,69.39) .. (653.67,66.63) -- cycle ;
        \draw  [fill={rgb, 255:red, 255; green, 255; blue, 255 }  ,fill opacity=1 ] (447.33,125.13) .. controls (447.33,120.99) and (450.24,117.63) .. (453.83,117.63) .. controls (457.42,117.63) and (460.33,120.99) .. (460.33,125.13) .. controls (460.33,129.28) and (457.42,132.63) .. (453.83,132.63) .. controls (450.24,132.63) and (447.33,129.28) .. (447.33,125.13) -- cycle ;
        \draw  [fill={rgb, 255:red, 255; green, 255; blue, 255 }  ,fill opacity=1 ] (515.33,95.13) .. controls (515.33,92.1) and (518.24,89.63) .. (521.83,89.63) .. controls (525.42,89.63) and (528.33,92.1) .. (528.33,95.13) .. controls (528.33,98.17) and (525.42,100.63) .. (521.83,100.63) .. controls (518.24,100.63) and (515.33,98.17) .. (515.33,95.13) -- cycle ;
        \draw  [fill={rgb, 255:red, 255; green, 255; blue, 255 }  ,fill opacity=1 ] (570.67,97.3) .. controls (570.67,92.33) and (573.13,88.3) .. (576.17,88.3) .. controls (579.2,88.3) and (581.67,92.33) .. (581.67,97.3) .. controls (581.67,102.27) and (579.2,106.3) .. (576.17,106.3) .. controls (573.13,106.3) and (570.67,102.27) .. (570.67,97.3) -- cycle ;
        \draw  [fill={rgb, 255:red, 255; green, 255; blue, 255 }  ,fill opacity=1 ] (631.67,97.47) .. controls (631.67,93.32) and (635.02,89.97) .. (639.17,89.97) .. controls (643.31,89.97) and (646.67,93.32) .. (646.67,97.47) .. controls (646.67,101.61) and (643.31,104.97) .. (639.17,104.97) .. controls (635.02,104.97) and (631.67,101.61) .. (631.67,97.47) -- cycle ;
        \draw  [fill={rgb, 255:red, 255; green, 255; blue, 255 }  ,fill opacity=1 ] (450,185.97) .. controls (450,183.21) and (453.36,180.97) .. (457.5,180.97) .. controls (461.64,180.97) and (465,183.21) .. (465,185.97) .. controls (465,188.73) and (461.64,190.97) .. (457.5,190.97) .. controls (453.36,190.97) and (450,188.73) .. (450,185.97) -- cycle ;
        \draw  [fill={rgb, 255:red, 255; green, 255; blue, 255 }  ,fill opacity=1 ] (529.13,154.03) .. controls (529.13,150.17) and (533.61,147.03) .. (539.13,147.03) .. controls (544.66,147.03) and (549.13,150.17) .. (549.13,154.03) .. controls (549.13,157.9) and (544.66,161.03) .. (539.13,161.03) .. controls (533.61,161.03) and (529.13,157.9) .. (529.13,154.03) -- cycle ;
        \draw  [fill={rgb, 255:red, 255; green, 255; blue, 255 }  ,fill opacity=1 ] (587.67,188.3) .. controls (587.67,184.99) and (591.7,182.3) .. (596.67,182.3) .. controls (601.64,182.3) and (605.67,184.99) .. (605.67,188.3) .. controls (605.67,191.61) and (601.64,194.3) .. (596.67,194.3) .. controls (591.7,194.3) and (587.67,191.61) .. (587.67,188.3) -- cycle ;
        \draw  [fill={rgb, 255:red, 255; green, 255; blue, 255 }  ,fill opacity=1 ] (627.33,185.63) .. controls (627.33,180.66) and (629.57,176.63) .. (632.33,176.63) .. controls (635.09,176.63) and (637.33,180.66) .. (637.33,185.63) .. controls (637.33,190.6) and (635.09,194.63) .. (632.33,194.63) .. controls (629.57,194.63) and (627.33,190.6) .. (627.33,185.63) -- cycle ;
        \draw  [fill={rgb, 255:red, 255; green, 255; blue, 255 }  ,fill opacity=1 ] (450,240.3) .. controls (450,236.99) and (452.69,234.3) .. (456,234.3) .. controls (459.31,234.3) and (462,236.99) .. (462,240.3) .. controls (462,243.61) and (459.31,246.3) .. (456,246.3) .. controls (452.69,246.3) and (450,243.61) .. (450,240.3) -- cycle ;
        \draw  [fill={rgb, 255:red, 255; green, 255; blue, 255 }  ,fill opacity=1 ] (517.33,247.13) .. controls (517.33,243.54) and (521.14,240.63) .. (525.83,240.63) .. controls (530.53,240.63) and (534.33,243.54) .. (534.33,247.13) .. controls (534.33,250.72) and (530.53,253.63) .. (525.83,253.63) .. controls (521.14,253.63) and (517.33,250.72) .. (517.33,247.13) -- cycle ;
        \draw  [fill={rgb, 255:red, 255; green, 255; blue, 255 }  ,fill opacity=1 ] (600.17,220.3) .. controls (600.17,215.61) and (602.85,211.8) .. (606.17,211.8) .. controls (609.48,211.8) and (612.17,215.61) .. (612.17,220.3) .. controls (612.17,224.99) and (609.48,228.8) .. (606.17,228.8) .. controls (602.85,228.8) and (600.17,224.99) .. (600.17,220.3) -- cycle ;
        \draw  [fill={rgb, 255:red, 255; green, 255; blue, 255 }  ,fill opacity=1 ] (656.8,247.53) .. controls (656.8,243.94) and (660.61,241.03) .. (665.3,241.03) .. controls (669.99,241.03) and (673.8,243.94) .. (673.8,247.53) .. controls (673.8,251.12) and (669.99,254.03) .. (665.3,254.03) .. controls (660.61,254.03) and (656.8,251.12) .. (656.8,247.53) -- cycle ;
        \draw  [fill={rgb, 255:red, 0; green, 0; blue, 0 }  ,fill opacity=1 ] (519,95.5) .. controls (519,94.12) and (520.12,93) .. (521.5,93) .. controls (522.88,93) and (524,94.12) .. (524,95.5) .. controls (524,96.88) and (522.88,98) .. (521.5,98) .. controls (520.12,98) and (519,96.88) .. (519,95.5) -- cycle ;
        \draw  [color={rgb, 255:red, 0; green, 0; blue, 0 }  ,draw opacity=1 ][fill={rgb, 255:red, 0; green, 0; blue, 0 }  ,fill opacity=1 ] (452,35.63) .. controls (452,34.25) and (453.12,33.13) .. (454.5,33.13) .. controls (455.88,33.13) and (457,34.25) .. (457,35.63) .. controls (457,37.01) and (455.88,38.13) .. (454.5,38.13) .. controls (453.12,38.13) and (452,37.01) .. (452,35.63) -- cycle ;
        \draw  [color={rgb, 255:red, 0; green, 0; blue, 0 }  ,draw opacity=1 ][fill={rgb, 255:red, 0; green, 0; blue, 0 }  ,fill opacity=1 ] (451.33,125.13) .. controls (451.33,123.75) and (452.45,122.63) .. (453.83,122.63) .. controls (455.21,122.63) and (456.33,123.75) .. (456.33,125.13) .. controls (456.33,126.51) and (455.21,127.63) .. (453.83,127.63) .. controls (452.45,127.63) and (451.33,126.51) .. (451.33,125.13) -- cycle ;
        \draw  [color={rgb, 255:red, 0; green, 0; blue, 0 }  ,draw opacity=1 ][fill={rgb, 255:red, 0; green, 0; blue, 0 }  ,fill opacity=1 ] (455,185.97) .. controls (455,184.59) and (456.12,183.47) .. (457.5,183.47) .. controls (458.88,183.47) and (460,184.59) .. (460,185.97) .. controls (460,187.35) and (458.88,188.47) .. (457.5,188.47) .. controls (456.12,188.47) and (455,187.35) .. (455,185.97) -- cycle ;
        \draw  [color={rgb, 255:red, 0; green, 0; blue, 0 }  ,draw opacity=1 ][fill={rgb, 255:red, 0; green, 0; blue, 0 }  ,fill opacity=1 ] (543.17,35.47) .. controls (543.17,34.09) and (544.29,32.97) .. (545.67,32.97) .. controls (547.05,32.97) and (548.17,34.09) .. (548.17,35.47) .. controls (548.17,36.85) and (547.05,37.97) .. (545.67,37.97) .. controls (544.29,37.97) and (543.17,36.85) .. (543.17,35.47) -- cycle ;
        \draw  [color={rgb, 255:red, 0; green, 0; blue, 0 }  ,draw opacity=1 ][fill={rgb, 255:red, 0; green, 0; blue, 0 }  ,fill opacity=1 ] (607,49.8) .. controls (607,48.42) and (608.12,47.3) .. (609.5,47.3) .. controls (610.88,47.3) and (612,48.42) .. (612,49.8) .. controls (612,51.18) and (610.88,52.3) .. (609.5,52.3) .. controls (608.12,52.3) and (607,51.18) .. (607,49.8) -- cycle ;
        \draw  [color={rgb, 255:red, 0; green, 0; blue, 0 }  ,draw opacity=1 ][fill={rgb, 255:red, 0; green, 0; blue, 0 }  ,fill opacity=1 ] (629.83,185.63) .. controls (629.83,184.25) and (630.95,183.13) .. (632.33,183.13) .. controls (633.71,183.13) and (634.83,184.25) .. (634.83,185.63) .. controls (634.83,187.01) and (633.71,188.13) .. (632.33,188.13) .. controls (630.95,188.13) and (629.83,187.01) .. (629.83,185.63) -- cycle ;
        \draw  [color={rgb, 255:red, 0; green, 0; blue, 0 }  ,draw opacity=1 ][fill={rgb, 255:red, 0; green, 0; blue, 0 }  ,fill opacity=1 ] (536.63,154.03) .. controls (536.63,152.65) and (537.75,151.53) .. (539.13,151.53) .. controls (540.51,151.53) and (541.63,152.65) .. (541.63,154.03) .. controls (541.63,155.41) and (540.51,156.53) .. (539.13,156.53) .. controls (537.75,156.53) and (536.63,155.41) .. (536.63,154.03) -- cycle ;
        \draw  [color={rgb, 255:red, 0; green, 0; blue, 0 }  ,draw opacity=1 ][fill={rgb, 255:red, 0; green, 0; blue, 0 }  ,fill opacity=1 ] (573.67,97.3) .. controls (573.67,95.92) and (574.79,94.8) .. (576.17,94.8) .. controls (577.55,94.8) and (578.67,95.92) .. (578.67,97.3) .. controls (578.67,98.68) and (577.55,99.8) .. (576.17,99.8) .. controls (574.79,99.8) and (573.67,98.68) .. (573.67,97.3) -- cycle ;
        \draw  [color={rgb, 255:red, 0; green, 0; blue, 0 }  ,draw opacity=1 ][fill={rgb, 255:red, 0; green, 0; blue, 0 }  ,fill opacity=1 ] (523.33,247.13) .. controls (523.33,245.75) and (524.45,244.63) .. (525.83,244.63) .. controls (527.21,244.63) and (528.33,245.75) .. (528.33,247.13) .. controls (528.33,248.51) and (527.21,249.63) .. (525.83,249.63) .. controls (524.45,249.63) and (523.33,248.51) .. (523.33,247.13) -- cycle ;
        \draw  [color={rgb, 255:red, 0; green, 0; blue, 0 }  ,draw opacity=1 ][fill={rgb, 255:red, 0; green, 0; blue, 0 }  ,fill opacity=1 ] (453.5,240.3) .. controls (453.5,238.92) and (454.62,237.8) .. (456,237.8) .. controls (457.38,237.8) and (458.5,238.92) .. (458.5,240.3) .. controls (458.5,241.68) and (457.38,242.8) .. (456,242.8) .. controls (454.62,242.8) and (453.5,241.68) .. (453.5,240.3) -- cycle ;
        \draw  [color={rgb, 255:red, 0; green, 0; blue, 0 }  ,draw opacity=1 ][fill={rgb, 255:red, 0; green, 0; blue, 0 }  ,fill opacity=1 ] (658.67,66.63) .. controls (658.67,65.25) and (659.79,64.13) .. (661.17,64.13) .. controls (662.55,64.13) and (663.67,65.25) .. (663.67,66.63) .. controls (663.67,68.01) and (662.55,69.13) .. (661.17,69.13) .. controls (659.79,69.13) and (658.67,68.01) .. (658.67,66.63) -- cycle ;
        \draw  [color={rgb, 255:red, 0; green, 0; blue, 0 }  ,draw opacity=1 ][fill={rgb, 255:red, 0; green, 0; blue, 0 }  ,fill opacity=1 ] (636.67,97.47) .. controls (636.67,96.09) and (637.79,94.97) .. (639.17,94.97) .. controls (640.55,94.97) and (641.67,96.09) .. (641.67,97.47) .. controls (641.67,98.85) and (640.55,99.97) .. (639.17,99.97) .. controls (637.79,99.97) and (636.67,98.85) .. (636.67,97.47) -- cycle ;
        \draw  [color={rgb, 255:red, 0; green, 0; blue, 0 }  ,draw opacity=1 ][fill={rgb, 255:red, 0; green, 0; blue, 0 }  ,fill opacity=1 ] (594.17,188.3) .. controls (594.17,186.92) and (595.29,185.8) .. (596.67,185.8) .. controls (598.05,185.8) and (599.17,186.92) .. (599.17,188.3) .. controls (599.17,189.68) and (598.05,190.8) .. (596.67,190.8) .. controls (595.29,190.8) and (594.17,189.68) .. (594.17,188.3) -- cycle ;
        \draw  [color={rgb, 255:red, 0; green, 0; blue, 0 }  ,draw opacity=1 ][fill={rgb, 255:red, 0; green, 0; blue, 0 }  ,fill opacity=1 ] (603.67,220.3) .. controls (603.67,218.92) and (604.79,217.8) .. (606.17,217.8) .. controls (607.55,217.8) and (608.67,218.92) .. (608.67,220.3) .. controls (608.67,221.68) and (607.55,222.8) .. (606.17,222.8) .. controls (604.79,222.8) and (603.67,221.68) .. (603.67,220.3) -- cycle ;
        \draw  [color={rgb, 255:red, 0; green, 0; blue, 0 }  ,draw opacity=1 ][fill={rgb, 255:red, 0; green, 0; blue, 0 }  ,fill opacity=1 ] (662.8,247.53) .. controls (662.8,246.15) and (663.92,245.03) .. (665.3,245.03) .. controls (666.68,245.03) and (667.8,246.15) .. (667.8,247.53) .. controls (667.8,248.91) and (666.68,250.03) .. (665.3,250.03) .. controls (663.92,250.03) and (662.8,248.91) .. (662.8,247.53) -- cycle ;
        \draw  [fill={rgb, 255:red, 0; green, 0; blue, 0 }  ,fill opacity=1 ] (153,50.15) .. controls (153,48.96) and (153.96,48) .. (155.15,48) .. controls (156.33,48) and (157.29,48.96) .. (157.29,50.15) .. controls (157.29,51.33) and (156.33,52.29) .. (155.15,52.29) .. controls (153.96,52.29) and (153,51.33) .. (153,50.15) -- cycle ;
        \draw  [fill={rgb, 255:red, 0; green, 0; blue, 0 }  ,fill opacity=1 ] (363,103.43) .. controls (363,101.53) and (363.96,100) .. (365.14,100) .. controls (366.32,100) and (367.28,101.53) .. (367.28,103.43) .. controls (367.28,105.32) and (366.32,106.85) .. (365.14,106.85) .. controls (363.96,106.85) and (363,105.32) .. (363,103.43) -- cycle ;
        \draw   (95.71,50) .. controls (95.71,33.43) and (97.63,20) .. (100,20) .. controls (102.37,20) and (104.29,33.43) .. (104.29,50) .. controls (104.29,66.57) and (102.37,80) .. (100,80) .. controls (97.63,80) and (95.71,66.57) .. (95.71,50) -- cycle ;
        \draw  [fill={rgb, 255:red, 0; green, 0; blue, 0 }  ,fill opacity=1 ] (153,110.15) .. controls (153,108.96) and (153.96,108) .. (155.15,108) .. controls (156.33,108) and (157.29,108.96) .. (157.29,110.15) .. controls (157.29,111.33) and (156.33,112.29) .. (155.15,112.29) .. controls (153.96,112.29) and (153,111.33) .. (153,110.15) -- cycle ;
        \draw  [fill={rgb, 255:red, 0; green, 0; blue, 0 }  ,fill opacity=1 ] (153,170.15) .. controls (153,168.96) and (153.96,168) .. (155.15,168) .. controls (156.33,168) and (157.29,168.96) .. (157.29,170.15) .. controls (157.29,171.33) and (156.33,172.29) .. (155.15,172.29) .. controls (153.96,172.29) and (153,171.33) .. (153,170.15) -- cycle ;
        \draw  [fill={rgb, 255:red, 0; green, 0; blue, 0 }  ,fill opacity=1 ] (153,230.15) .. controls (153,228.96) and (153.96,228) .. (155.15,228) .. controls (156.33,228) and (157.29,228.96) .. (157.29,230.15) .. controls (157.29,231.33) and (156.33,232.29) .. (155.15,232.29) .. controls (153.96,232.29) and (153,231.33) .. (153,230.15) -- cycle ;
        \draw  [fill={rgb, 255:red, 0; green, 0; blue, 0 }  ,fill opacity=1 ] (528,50.5) .. controls (528,49.12) and (529.12,48) .. (530.5,48) .. controls (531.88,48) and (533,49.12) .. (533,50.5) .. controls (533,51.88) and (531.88,53) .. (530.5,53) .. controls (529.12,53) and (528,51.88) .. (528,50.5) -- cycle ;
        \draw  [fill={rgb, 255:red, 0; green, 0; blue, 0 }  ,fill opacity=1 ] (588,110.5) .. controls (588,109.12) and (589.12,108) .. (590.5,108) .. controls (591.88,108) and (593,109.12) .. (593,110.5) .. controls (593,111.88) and (591.88,113) .. (590.5,113) .. controls (589.12,113) and (588,111.88) .. (588,110.5) -- cycle ;
        \draw  [fill={rgb, 255:red, 0; green, 0; blue, 0 }  ,fill opacity=1 ] (648,110.5) .. controls (648,109.12) and (649.12,108) .. (650.5,108) .. controls (651.88,108) and (653,109.12) .. (653,110.5) .. controls (653,111.88) and (651.88,113) .. (650.5,113) .. controls (649.12,113) and (648,111.88) .. (648,110.5) -- cycle ;
        \draw  [fill={rgb, 255:red, 0; green, 0; blue, 0 }  ,fill opacity=1 ] (648,170.5) .. controls (648,169.12) and (649.12,168) .. (650.5,168) .. controls (651.88,168) and (653,169.12) .. (653,170.5) .. controls (653,171.88) and (651.88,173) .. (650.5,173) .. controls (649.12,173) and (648,171.88) .. (648,170.5) -- cycle ;
        \draw  [fill={rgb, 255:red, 0; green, 0; blue, 0 }  ,fill opacity=1 ] (528,230.5) .. controls (528,229.12) and (529.12,228) .. (530.5,228) .. controls (531.88,228) and (533,229.12) .. (533,230.5) .. controls (533,231.88) and (531.88,233) .. (530.5,233) .. controls (529.12,233) and (528,231.88) .. (528,230.5) -- cycle ;
        \draw  [fill={rgb, 255:red, 0; green, 0; blue, 0 }  ,fill opacity=1 ] (519,95.5) .. controls (519,94.12) and (520.12,93) .. (521.5,93) .. controls (522.88,93) and (524,94.12) .. (524,95.5) .. controls (524,96.88) and (522.88,98) .. (521.5,98) .. controls (520.12,98) and (519,96.88) .. (519,95.5) -- cycle ;
        \draw  [color={rgb, 255:red, 0; green, 0; blue, 0 }  ,draw opacity=1 ][fill={rgb, 255:red, 0; green, 0; blue, 0 }  ,fill opacity=1 ] (543.17,35.47) .. controls (543.17,34.09) and (544.29,32.97) .. (545.67,32.97) .. controls (547.05,32.97) and (548.17,34.09) .. (548.17,35.47) .. controls (548.17,36.85) and (547.05,37.97) .. (545.67,37.97) .. controls (544.29,37.97) and (543.17,36.85) .. (543.17,35.47) -- cycle ;
        \draw  [color={rgb, 255:red, 0; green, 0; blue, 0 }  ,draw opacity=1 ][fill={rgb, 255:red, 0; green, 0; blue, 0 }  ,fill opacity=1 ] (662.8,247.53) .. controls (662.8,246.15) and (663.92,245.03) .. (665.3,245.03) .. controls (666.68,245.03) and (667.8,246.15) .. (667.8,247.53) .. controls (667.8,248.91) and (666.68,250.03) .. (665.3,250.03) .. controls (663.92,250.03) and (662.8,248.91) .. (662.8,247.53) -- cycle ;

        \draw (100,260) -- (20,260);
        \draw (150,20) -- (150,260);
        \draw [line width=1.5] (50,30) -- (50,250);
        \draw (310,20) -- (360,65);
        \draw (310,80) -- (360,108);
        \draw (310,140) -- (360,160);
        \draw (310,200) -- (360,235);
        \draw (310,80) -- (360,48);
        \draw [line width=0.75] (310,140) -- (360,98.04);
        \draw (310,200) -- (360,173);
        \draw (360,244) -- (310,260);
        \draw (440,20) -- (440,260);
        \draw (440,20) -- (680,20);
        \draw (680,20) -- (680,260);
        \draw (440,260) -- (680,260);
        \draw (500,20) -- (500,260);
        \draw (560,20) -- (560,260);
        \draw (620,20) -- (620,260);
        \draw (440,80) -- (680,80);
        \draw (440,140) -- (680,140);
        \draw (440,200) -- (680,200);
        \draw (100,80) -- (150,110.05);
        \draw (100,140) -- (150,170.05);
        \draw (100,200) -- (150,230.05);
        \draw (100,80) -- (150,50.05);
        \draw (100,140) -- (150,110.05);
        \draw (100,200) -- (150,170.05);
        \draw (100,260) -- (150,230.05);
        \draw [line width=0.75] (100,20) -- (20,20);
        \draw (100,20) -- (150,50.05);  
        \draw [line width=1.5] (40,30) -- (40,250);
        \draw [line width=1.5] (60,30) -- (60,250);
        \draw (360,20) -- (360,260);
        \draw [line width=0.75] (310,20) -- (230,20);
        \draw [line width=0.75] (310,260) -- (230,260);

        \draw [line width=1.5] (270,250) .. controls (303.66,199.65) and (236.3,81.28) .. (270,30);
        \draw [line width=1.5] (260,250) .. controls (293.66,199.65) and (226.3,81.28) .. (260,30);
        \draw [line width=1.5] (250,250) .. controls (283.66,199.65) and (216.3,81.28) .. (250,30);

        \draw (650.5,170.5) -- (633.87,184.35) ;
        \draw [shift={(632.33,185.63)}, rotate = 320.2] [color={rgb, 255:red, 0; green, 0; blue, 0 }  ][line width=0.75]    (10.93,-3.29) .. controls (6.95,-1.4) and (3.31,-0.3) .. (0,0) .. controls (3.31,0.3) and (6.95,1.4) .. (10.93,3.29)   ;
        \draw (530.5,230.5) -- (526.37,245.21) ;
        \draw [shift={(525.83,247.13)}, rotate = 285.67] [color={rgb, 255:red, 0; green, 0; blue, 0 }  ][line width=0.75]    (10.93,-3.29) .. controls (6.95,-1.4) and (3.31,-0.3) .. (0,0) .. controls (3.31,0.3) and (6.95,1.4) .. (10.93,3.29)   ;
        \draw (530.5,110.5) -- (522.53,97.21) ;
        \draw [shift={(521.5,95.5)}, rotate = 59.04] [color={rgb, 255:red, 0; green, 0; blue, 0 }  ][line width=0.75]    (10.93,-3.29) .. controls (6.95,-1.4) and (3.31,-0.3) .. (0,0) .. controls (3.31,0.3) and (6.95,1.4) .. (10.93,3.29)   ;
        \draw (470.5,50.5) -- (455.97,36.99) ;
        \draw [shift={(454.5,35.63)}, rotate = 42.9] [color={rgb, 255:red, 0; green, 0; blue, 0 }  ][line width=0.75]    (10.93,-3.29) .. controls (6.95,-1.4) and (3.31,-0.3) .. (0,0) .. controls (3.31,0.3) and (6.95,1.4) .. (10.93,3.29)   ;
        \draw    (530.5,50.5) -- (544.25,36.87) ;
        \draw [shift={(545.67,35.47)}, rotate = 135.25] [color={rgb, 255:red, 0; green, 0; blue, 0 }  ][line width=0.75]    (10.93,-3.29) .. controls (6.95,-1.4) and (3.31,-0.3) .. (0,0) .. controls (3.31,0.3) and (6.95,1.4) .. (10.93,3.29)   ;
        \draw    (590.5,50.5) -- (607.5,49.87) ;
        \draw [shift={(609.5,49.8)}, rotate = 177.89] [color={rgb, 255:red, 0; green, 0; blue, 0 }  ][line width=0.75]    (10.93,-3.29) .. controls (6.95,-1.4) and (3.31,-0.3) .. (0,0) .. controls (3.31,0.3) and (6.95,1.4) .. (10.93,3.29)   ;
        \draw    (470.5,111.17) -- (455.37,123.85) ;
        \draw [shift={(453.83,125.13)}, rotate = 320.04] [color={rgb, 255:red, 0; green, 0; blue, 0 }  ][line width=0.75]    (10.93,-3.29) .. controls (6.95,-1.4) and (3.31,-0.3) .. (0,0) .. controls (3.31,0.3) and (6.95,1.4) .. (10.93,3.29)   ;
        \draw    (650.5,50.5) -- (660.06,64.97) ;
        \draw [shift={(661.17,66.63)}, rotate = 236.53] [color={rgb, 255:red, 0; green, 0; blue, 0 }  ][line width=0.75]    (10.93,-3.29) .. controls (6.95,-1.4) and (3.31,-0.3) .. (0,0) .. controls (3.31,0.3) and (6.95,1.4) .. (10.93,3.29)   ;
        \draw    (470.5,170.5) -- (458.79,184.44) ;
        \draw [shift={(457.5,185.97)}, rotate = 310.05] [color={rgb, 255:red, 0; green, 0; blue, 0 }  ][line width=0.75]    (10.93,-3.29) .. controls (6.95,-1.4) and (3.31,-0.3) .. (0,0) .. controls (3.31,0.3) and (6.95,1.4) .. (10.93,3.29)   ;
        \draw    (530.5,170.5) -- (538.2,155.8) ;
        \draw [shift={(539.13,154.03)}, rotate = 117.67] [color={rgb, 255:red, 0; green, 0; blue, 0 }  ][line width=0.75]    (10.93,-3.29) .. controls (6.95,-1.4) and (3.31,-0.3) .. (0,0) .. controls (3.31,0.3) and (6.95,1.4) .. (10.93,3.29)   ;
        \draw    (590.5,170.5) -- (596.01,186.41) ;
        \draw [shift={(596.67,188.3)}, rotate = 250.89] [color={rgb, 255:red, 0; green, 0; blue, 0 }  ][line width=0.75]    (10.93,-3.29) .. controls (6.95,-1.4) and (3.31,-0.3) .. (0,0) .. controls (3.31,0.3) and (6.95,1.4) .. (10.93,3.29)   ;
        \draw    (590.5,110.5) -- (577.64,98.65) ;
        \draw [shift={(576.17,97.3)}, rotate = 42.64] [color={rgb, 255:red, 0; green, 0; blue, 0 }  ][line width=0.75]    (10.93,-3.29) .. controls (6.95,-1.4) and (3.31,-0.3) .. (0,0) .. controls (3.31,0.3) and (6.95,1.4) .. (10.93,3.29)   ;
        \draw    (650.5,110.5) -- (640.48,98.98) ;
        \draw [shift={(639.17,97.47)}, rotate = 48.99] [color={rgb, 255:red, 0; green, 0; blue, 0 }  ][line width=0.75]    (10.93,-3.29) .. controls (6.95,-1.4) and (3.31,-0.3) .. (0,0) .. controls (3.31,0.3) and (6.95,1.4) .. (10.93,3.29)   ;
        \draw    (470.5,230.5) -- (457.66,239.18) ;
        \draw [shift={(456,240.3)}, rotate = 325.95] [color={rgb, 255:red, 0; green, 0; blue, 0 }  ][line width=0.75]    (10.93,-3.29) .. controls (6.95,-1.4) and (3.31,-0.3) .. (0,0) .. controls (3.31,0.3) and (6.95,1.4) .. (10.93,3.29)   ;
        \draw    (590.5,230.5) -- (604.49,221.39) ;
        \draw [shift={(606.17,220.3)}, rotate = 146.93] [color={rgb, 255:red, 0; green, 0; blue, 0 }  ][line width=0.75]    (10.93,-3.29) .. controls (6.95,-1.4) and (3.31,-0.3) .. (0,0) .. controls (3.31,0.3) and (6.95,1.4) .. (10.93,3.29)   ;
        \draw    (650.5,230.5) -- (663.99,246.02) ;
        \draw [shift={(665.3,247.53)}, rotate = 229.01] [color={rgb, 255:red, 0; green, 0; blue, 0 }  ][line width=0.75]    (10.93,-3.29) .. controls (6.95,-1.4) and (3.31,-0.3) .. (0,0) .. controls (3.31,0.3) and (6.95,1.4) .. (10.93,3.29)   ;
        
    \end{tikzpicture}
    \caption{Schematic depiction of the physical principle of a SH-WFS. Left and center: Cross cuts parallel to the optical axis for unperturbed and perturbed wavefronts. Right: Front view onto the sensor array and blurred/shifted images of the distant point source. See Section~\ref{subsect_SHWFS} for details. Image taken from \cite{Hubmer_Sherina_Ramlau_Pircher_Leitgeb_2023}.}
    \label{fig_Shack_Hartmann_WFS}
\end{figure}

The Shack-Hartmann wavefront sensor (SH-WFS) is one of the most commonly used WFSs in AO \cite{Ellerbroek_Vogel_2009,Platt_Shack_2001,Primot_2003}. It consists of a quadratic array of small lenses, also called lenslets. Through these, incoming light has to pass before reaching a CCD photon detector located in their focal plane. The detector itself is subdivided into a regular grid of so-called subapertures, and each lenslet focuses a part of the incoming light onto one of the subapertures; cf.~Figure~\ref{fig_Shack_Hartmann_WFS}. Now assume that the incoming light originates from a distant point source such as an astronomical or artificial star, or a distant laser, and that no aberrations are present. In this case, the incoming wavefront of the light source is plane, and thus each lenslet focuses the light onto a (diffraction-limited) point onto its corresponding subaperture on the CCD detector; cf.~Figure~\ref{fig_Shack_Hartmann_WFS}(left). On the other hand, if due to aberrations the wavefront is not plane, then these points are in fact shifted away from the center, and their shape is blurred; cf.~Figure~\ref{fig_Shack_Hartmann_WFS}(middle). The SH-WFS effectively measures the change in position of the centers of mass of these blurred points with respect to the centers of the subapertures; cf.~Figure~\ref{fig_Shack_Hartmann_WFS}(right). These relative shifts can mathematically be connected to the average gradient (slope) of the wavefront aberration on each subaperture, from which an approximation of the incoming wavefront can then be computed numerically \cite{Roddier_1999}. This is a non-trivial inverse problem, for which many different reconstruction approaches have been proposed; see e.g.\ \cite{Zhariy_Neubauer_Rosensteiner_Ramlau_2011,Rosensteiner_2011_01,Rosensteiner_2011_02}. These typically leverage physical priors such as smoothness or statistical assumptions on the wavefront aberrations, and are usually optimized in terms of numerical efficiency to satisfy stringent real-time requirements \cite{Stadler2022,Tallon_Bechet_TallonBosc_LeLouran_Thiebaut_Marchetti_2011}.

\subsection{Fourier-type wavefront sensors}\label{subsect_FWFS}

\begin{figure}[ht!]
    \centering
    \begin{tikzpicture}[x=0.75pt,y=0.75pt,yscale=-0.85,xscale=0.85]
    
        \draw (176.25,150) .. controls (176.25,94.77) and (182.41,50) .. (190,50) .. controls (197.59,50) and (203.75,94.77) .. (203.75,150) .. controls (203.75,205.23) and (197.59,250) .. (190,250) .. controls (182.41,250) and (176.25,205.23) .. (176.25,150) -- cycle;
        \draw (476.25,150) .. controls (476.25,94.77) and (482.41,50) .. (490,50) .. controls (497.59,50) and (503.75,94.77) .. (503.75,150) .. controls (503.75,205.23) and (497.59,250) .. (490,250) .. controls (482.41,250) and (476.25,205.23) .. (476.25,150) -- cycle;
    
        \draw (340,90) -- (340,120);
        \draw (340,160) -- (340,190);
        \draw (190,50) -- (190,250);
        \draw (490,50) -- (490,250);
        \draw [ultra thick] (650,50) -- (650,250);
        
        \draw (40,130) .. controls (60.8,100.3) and (19.8,79.3) .. (40,50);
        \draw (40,250) .. controls (60.8,220.3) and (19.8,199.3) .. (40,170);
        \draw (640,130) .. controls (660.8,100.3) and (619.8,79.3) .. (640,50);
        \draw (640,250) .. controls (660.8,220.3) and (619.8,199.3) .. (640,170);

        \draw (100,270) -- (42,270);
        \draw [shift={(40,270)}, rotate = 360][color={rgb, 255:red, 0; green, 0; blue, 0}][line width=0.75] (10.93,-3.29) .. controls (6.95,-1.4) and (3.31,-0.3) .. (0,0) .. controls (3.31,0.3) and (6.95,1.4) .. (10.93,3.29);
        \draw (250,270) -- (192,270);
        \draw [shift={(190,270)}, rotate = 360][color={rgb, 255:red, 0; green, 0; blue, 0}][line width=0.75] (10.93,-3.29) .. controls (6.95,-1.4) and (3.31,-0.3) .. (0,0) .. controls (3.31,0.3) and (6.95,1.4) .. (10.93,3.29);
        \draw (400,270) -- (342,270);
        \draw [shift={(340,270)}, rotate = 360][color={rgb, 255:red, 0; green, 0; blue, 0}][line width=0.75] (10.93,-3.29) .. controls (6.95,-1.4) and (3.31,-0.3) .. (0,0) .. controls (3.31,0.3) and (6.95,1.4) .. (10.93,3.29);
        \draw (550,270) -- (492,270);
        \draw [shift={(490,270)}, rotate = 360][color={rgb, 255:red, 0; green, 0; blue, 0}][line width=0.75] (10.93,-3.29) .. controls (6.95,-1.4) and (3.31,-0.3) .. (0,0) .. controls (3.31,0.3) and (6.95,1.4) .. (10.93,3.29);
        \draw (130,270) -- (188,270);
        \draw [shift={(190,270)}, rotate = 180] [color={rgb, 255:red, 0; green, 0; blue, 0}][line width=0.75] (10.93,-3.29) .. controls (6.95,-1.4) and (3.31,-0.3) .. (0,0) .. controls (3.31,0.3) and (6.95,1.4) .. (10.93,3.29);
        \draw (280,270) -- (338,270);
        \draw [shift={(340,270)}, rotate = 180] [color={rgb, 255:red, 0; green, 0; blue, 0}][line width=0.75] (10.93,-3.29) .. controls (6.95,-1.4) and (3.31,-0.3) .. (0,0) .. controls (3.31,0.3) and (6.95,1.4) .. (10.93,3.29) ;
        \draw (430,270) -- (488,270);
        \draw [shift={(490,270.3)}, rotate = 180.29] [color={rgb, 255:red, 0; green, 0; blue, 0}][line width=0.75] (10.93,-3.29) .. controls (6.95,-1.4) and (3.31,-0.3) .. (0,0) .. controls (3.31,0.3) and (6.95,1.4) .. (10.93,3.29);
        \draw (580,270) -- (638,270);
        \draw [shift={(640,270)}, rotate = 180] [color={rgb, 255:red, 0; green, 0; blue, 0}][line width=0.75] (10.93,-3.29) .. controls (6.95,-1.4) and (3.31,-0.3) .. (0,0) .. controls (3.31,0.3) and (6.95,1.4) .. (10.93,3.29);

        \draw (110,258.4) node [anchor=north west][inner sep=0.75pt]{$f$};
        \draw (260,258.4) node [anchor=north west][inner sep=0.75pt]{$f$};
        \draw (410,258.4) node [anchor=north west][inner sep=0.75pt]{$f$};
        \draw (560,258.4) node [anchor=north west][inner sep=0.75pt]{$f$};
    
        \draw (172,23) node [anchor=north west][inner sep=0.75pt][align=left]{Lens};
        \draw (472,23) node [anchor=north west][inner sep=0.75pt][align=left]{Lens};
        \draw (290,23) node [anchor=north west][inner sep=0.75pt][align=left]{Fourier plane};
        \draw (2,23) node [anchor=north west][inner sep=0.75pt][align=left]{Image plane};
        \draw (562,23) node [anchor=north west][inner sep=0.75pt][align=left]{Conjugate plane};
        \draw (280,63.4) node [anchor=north west][inner sep=0.75pt]{Optical filtering};
        \draw (323,125) node [anchor=north west][inner sep=0.75pt]{\Huge $\Delta$};
        \draw (305,220) node [anchor=north west][inner sep=0.75pt]{$\triangleq$ OTF};
        \draw (275,195) node [anchor=north west][inner sep=0.75pt][align=left]{Optical element};
        \draw (2,138.4) node [anchor=north west][inner sep=0.75pt]{Incident field};
        \draw (540,140) node [anchor=north west][inner sep=0.75pt]{Filtered field};
        \draw (680,80) node [anchor=north west][inner sep=0.75pt][rotate=270]{Camera/Detector};

    \end{tikzpicture}
    \caption{Schematic depiction of a Fourier-type WFS measurement system; cf.~\cite{HuNeuSha_2023}.}
    \label{fig_fwfs}
\end{figure}
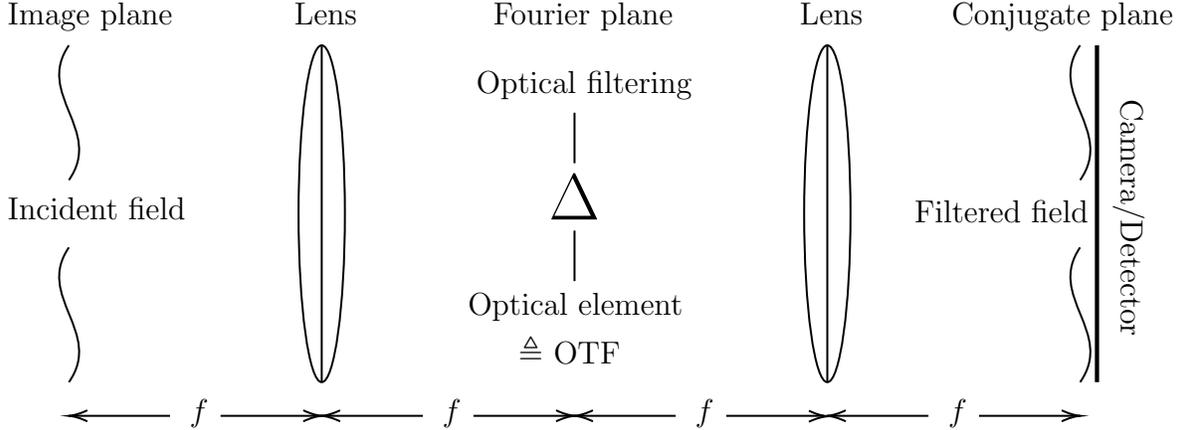

Next, we consider the large class of nonlinear Fourier-type WFSs~\cite{Fauv16,Potiron2019}, which are included in the instrument design of many current and upcoming AO systems. Fourier-type wavefront sensing is based on optical Fourier filtering with a suitable optical element, such as a multi-facet glass prism, located in the focal plane; cf.~Figure~\ref{fig_fwfs}. The optical element splits the electromagnetic field into several (potentially overlapping) beams, each of which representing a differently filtered image of the entrance pupil. The number of beams after the optical element corresponds to the number of its facets dividing the light. The intensity patterns are then measured by a camera in the conjugate pupil plane. Especially in astronomical AO systems, an additional modulation component is often added to the Fourier-type WFS setup due to nonlinearity issues. This means that the electromagnetic field is moved around the apex of the optical element in order to improve the light distribution to all faces of the optical element \cite{ShaHuRa20}, influencing system complexity, cost-effectiveness, and sensitivity~\cite{RaFa99,Veri04}. 

\begin{figure}[ht!]
    \centering
    \includegraphics[width=\textwidth]{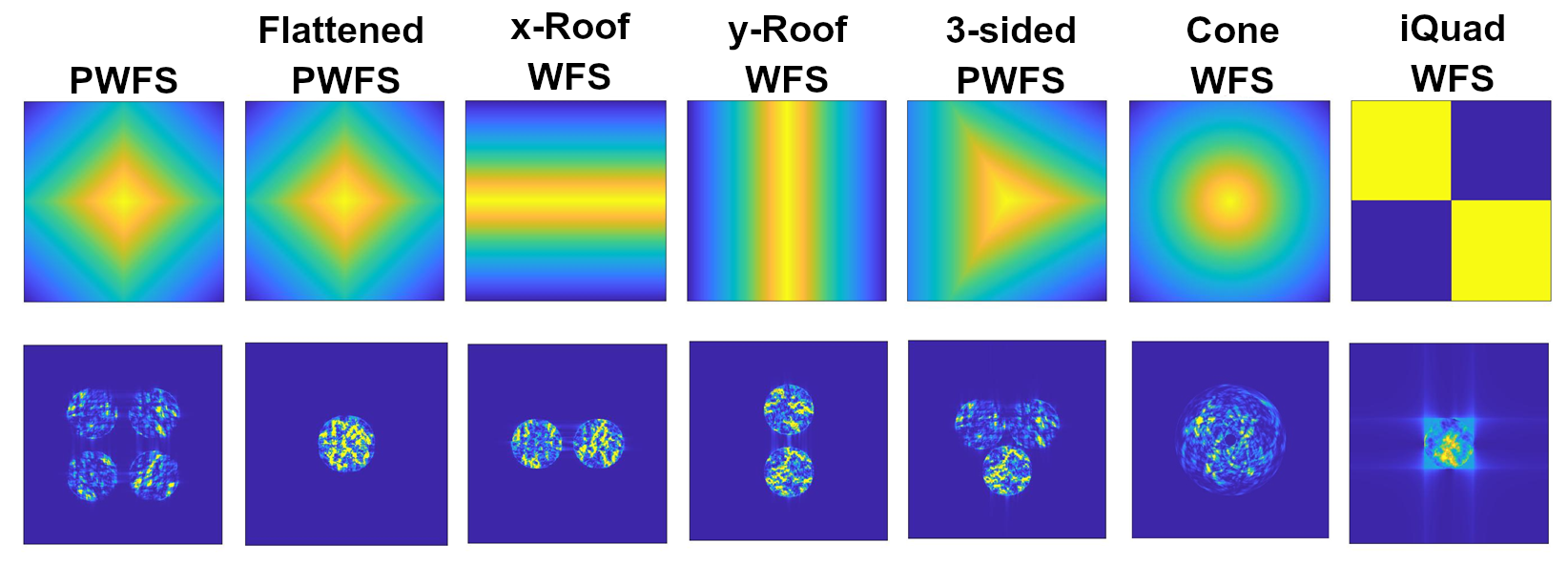}
    \caption{Optical transfer function (top) and corresponding intensity image (bottom) of several Fourier-type WFSs with the phase $\phi$ from Figure~\ref{fig_example_wrapping} as the incoming wavefront aberration. The first two columns refer to PWFSs with different apex angles.}
  \label{fig_otfs}
\end{figure}

The key component of a Fourier-type WFS is the optical element in its focal/Fourier plane. Different realisations of Fourier-type WFSs correspond to different optical elements which split the light in the focal plane, e.g.\ an $n$-sided pyramidal prism. The most common example in AO instruments is the $4$-sided glass pyramidal prism, typically referred to as the pyramid WFS (PWFS)~\cite{Raga96,RaDi02}. The prism splits the electromagnetic field into $4$ beams that are imaged in the conjugate pupil plane. Moreover, $2$-sided (roof WFS), $3$-sided, and $6$-sided prisms, as well as the extension of the idea to an infinite number of faces (cone WFS) have been investigated in more detail for astronomical AO~\cite{Clare_ao4elt5,ClareCone2020,engler2017,Veri04}. 
A Fourier-type WFSs for which the optical element does not have the shape of a prism is the Zernike WFS. It has a small circular hole in the center of the mask of the optical element~\cite{Zernike1934}. The iQuad WFS is another Fourier-type WFS. Its focal plane is divided into $4$-quadrants around the origin, i.e., it has a Cartesian structure where each quadrant is phase shifted with its two neighbours~\cite{FaHuShaRaLAM19_AO4ELTproc}. As we will discuss in Section~\ref{subsect_digFWFS}, the different optical elements can mathematically be characterized via their optical transfer functions (OTFs) \cite{Fauv16}, examples of which are given in Table~\ref{table_psi} below. Visualizations of the OTFs corresponding to the 3-sided WFS, cone WFS, iQuad WFS, and different PWFSs and roof WFSs are depicted in Figure~\ref{fig_otfs}. Note that the apex angle of the prism influences the separation distance between the intensity patterns in the conjugate pupil plane (see Figure~\ref{fig_otfs}, first three columns from the left).

\section{Digital wavefront sensors for phase unwrapping}\label{sect_digWFS}

In this section, we present the general concept of phase unwrapping using digital WFSs, and introduce two classes of unwrapping algorithms based on (digital) Shack-Hartmann and Fourier-type WFSs. For this, we first have to recall some results from Fourier optics.

\subsection{Fourier transforms and Fourier optics}\label{subsect_Fourier}

In its most general form, the propagation of an electric field (light) through an optical system is described by Maxwell's equations. In many situations, this propagation can be approximated to a high degree of accuracy using the tools of Fourier optics, a detailed overview of which can e.g.\ be found in the seminal work \cite{Goodman_2005}. Here, we only require the Fourier optics description of the optical setup depicted in Figure~\ref{fig_image_formation_model}, which consists of two properly aligned lenses and serves as a generalized model of an imaging system.

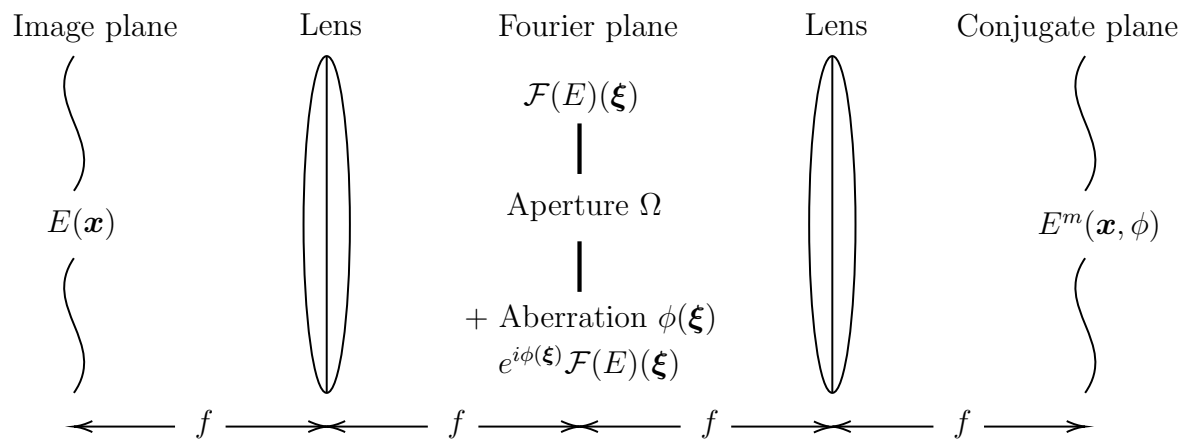
\begin{figure}[ht!]
    \centering
    \begin{tikzpicture}[x=0.75pt,y=0.75pt,yscale=-0.85,xscale=0.85]
    
        \draw (176.25,150) .. controls (176.25,94.77) and (182.41,50) .. (190,50) .. controls (197.59,50) and (203.75,94.77) .. (203.75,150) .. controls (203.75,205.23) and (197.59,250) .. (190,250) .. controls (182.41,250) and (176.25,205.23) .. (176.25,150) -- cycle;
        \draw (476.25,150) .. controls (476.25,94.77) and (482.41,50) .. (490,50) .. controls (497.59,50) and (503.75,94.77) .. (503.75,150) .. controls (503.75,205.23) and (497.59,250) .. (490,250) .. controls (482.41,250) and (476.25,205.23) .. (476.25,150) -- cycle;
    
        \draw [ultra thick](340,90) -- (340,120);
        \draw [ultra thick](340,160) -- (340,190);
        \draw (190,50) -- (190,250);
        \draw (490,50) -- (490,250);
    
        \draw (40,130) .. controls (60.8,100.3) and (19.8,79.3) .. (40,50);
        \draw (40,250) .. controls (60.8,220.3) and (19.8,199.3) .. (40,170);
        \draw (640,130) .. controls (660.8,100.3) and (619.8,79.3) .. (640,50);
        \draw (640,250) .. controls (660.8,220.3) and (619.8,199.3) .. (640,170);

        \draw (100,270) -- (42,270);
        \draw [shift={(40,270)}, rotate = 360][color={rgb, 255:red, 0; green, 0; blue, 0}][line width=0.75] (10.93,-3.29) .. controls (6.95,-1.4) and (3.31,-0.3) .. (0,0) .. controls (3.31,0.3) and (6.95,1.4) .. (10.93,3.29);
        \draw (250,270) -- (192,270);
        \draw [shift={(190,270)}, rotate = 360][color={rgb, 255:red, 0; green, 0; blue, 0}][line width=0.75] (10.93,-3.29) .. controls (6.95,-1.4) and (3.31,-0.3) .. (0,0) .. controls (3.31,0.3) and (6.95,1.4) .. (10.93,3.29);
        \draw (400,270) -- (342,270);
        \draw [shift={(340,270)}, rotate = 360][color={rgb, 255:red, 0; green, 0; blue, 0}][line width=0.75] (10.93,-3.29) .. controls (6.95,-1.4) and (3.31,-0.3) .. (0,0) .. controls (3.31,0.3) and (6.95,1.4) .. (10.93,3.29);
        \draw (550,270) -- (492,270);
        \draw [shift={(490,270)}, rotate = 360][color={rgb, 255:red, 0; green, 0; blue, 0}][line width=0.75] (10.93,-3.29) .. controls (6.95,-1.4) and (3.31,-0.3) .. (0,0) .. controls (3.31,0.3) and (6.95,1.4) .. (10.93,3.29);
        \draw (130,270) -- (188,270);
        \draw [shift={(190,270)}, rotate = 180] [color={rgb, 255:red, 0; green, 0; blue, 0}][line width=0.75] (10.93,-3.29) .. controls (6.95,-1.4) and (3.31,-0.3) .. (0,0) .. controls (3.31,0.3) and (6.95,1.4) .. (10.93,3.29);
        \draw (280,270) -- (338,270);
        \draw [shift={(340,270)}, rotate = 180] [color={rgb, 255:red, 0; green, 0; blue, 0}][line width=0.75] (10.93,-3.29) .. controls (6.95,-1.4) and (3.31,-0.3) .. (0,0) .. controls (3.31,0.3) and (6.95,1.4) .. (10.93,3.29) ;
        \draw (430,270) -- (488,270);
        \draw [shift={(490,270.3)}, rotate = 180.29] [color={rgb, 255:red, 0; green, 0; blue, 0}][line width=0.75] (10.93,-3.29) .. controls (6.95,-1.4) and (3.31,-0.3) .. (0,0) .. controls (3.31,0.3) and (6.95,1.4) .. (10.93,3.29);
        \draw (580,270) -- (638,270);
        \draw [shift={(640,270)}, rotate = 180] [color={rgb, 255:red, 0; green, 0; blue, 0}][line width=0.75] (10.93,-3.29) .. controls (6.95,-1.4) and (3.31,-0.3) .. (0,0) .. controls (3.31,0.3) and (6.95,1.4) .. (10.93,3.29);

        \draw (110,258.4) node [anchor=north west][inner sep=0.75pt]{$f$};
        \draw (260,258.4) node [anchor=north west][inner sep=0.75pt]{$f$};
        \draw (410,258.4) node [anchor=north west][inner sep=0.75pt]{$f$};
        \draw (560,258.4) node [anchor=north west][inner sep=0.75pt]{$f$};
    
        \draw (172,23) node [anchor=north west][inner sep=0.75pt][align=left]{Lens};
        \draw (472,23) node [anchor=north west][inner sep=0.75pt][align=left]{Lens};
        \draw (290,23) node [anchor=north west][inner sep=0.75pt][align=left]{Fourier plane};
        \draw (2,23) node [anchor=north west][inner sep=0.75pt][align=left]{Image plane};
        \draw (562,23) node [anchor=north west][inner sep=0.75pt][align=left]{Conjugate plane};
        \draw (305,63.4) node [anchor=north west][inner sep=0.75pt]{$\F( E)(\xiv)$};
        \draw (290,220) node [anchor=north west][inner sep=0.75pt]{$e^{i \phi(\xiv)}\F(E)(\xiv)$};
        \draw (295,130) node [anchor=north west][inner sep=0.75pt]{Aperture $\Omega$};
        \draw (270,195) node [anchor=north west][inner sep=0.75pt][align=left]{+ Aberration $\phi(\xiv)$};
        \draw (22,138.4) node [anchor=north west][inner sep=0.75pt]{$E(\xv)$};
        \draw (610,140) node [anchor=north west][inner sep=0.75pt]{$E^{m}(\xv,\phi)$};

    \end{tikzpicture}
    \caption{Schematic depiction of two-lens imaging system corresponding to \eqref{model_aperture}.}
    \label{fig_image_formation_model}
\end{figure}

First, we recall the Fourier transform $\F$ and its inverse $\FI$, defined by
    \begin{equation*}
        \F(f)(\xiv) := \int_{\Rt} f(\xv) e^{-2\pi i \xv \cdot \xiv } \, d \xv \,,
        \qquad
        \text{and}
        \qquad 
        \FI( g )(\xv) := \int_{\Rt} g(\xiv) e^{2\pi i \xiv \cdot \xv } \, d \xiv \,,
    \end{equation*}
for all (square) integrable functions $f : \Rt \to \C$. Next, assume that the complex valued electric field $E : \Rt \to \C$, initially located at the focal length $f$ from the first lens, enters the optical system from the left as depicted in Figure~\ref{fig_image_formation_model}. Furthermore, assume that in the Fourier plane the resulting field is modulated by the multiplicative factor $e^{i \phi}$, where the function $\phi : \Rt \to \R$ represents either some wavefront aberration or any other type of phase modulation (e.g., induced by an optical element or a spatial light modulator). According to Fourier optics, the electric field $\Em : \Rt \to \C$ exiting the optical system at the focal length $f$ after the second lens can be computed by
    \begin{equation}\label{model}
        \Em(\xv,\phi) = \FI\kl{ e^{i \phi(\xiv)} \F(E)(\xiv) }(\xv) \,, 
        \qquad
        \forall \, \xv \in \R^2 \,.
    \end{equation}
If an aperture is present in the Fourier plane, \eqref{model} becomes
    \begin{equation}\label{model_aperture}
        \Em(\xv,\phi) = \FI\kl{ \chi_\Omega(\xiv) e^{i \phi(\xiv)} \F(E)(\xiv) }(\xv) \,, 
        \qquad
        \forall \, \xv \in \R^2 \,.
    \end{equation}
where $\chi_\Omega$ is the characteristic function of the domain $\Omega \subset \Rt$ representing the shape of the aperture. The intensity corresponding to the electric field $\Em$ is defined by
    \begin{equation*}
        \Im(\xv,\phi) := \abs{ \Em(\xv,\phi) }^2 \,.
    \end{equation*}
In the idealized case, where the incoming light $E$ stems from an infinitely distant point source, i.e., $E \equiv \delta_0$ or $\F(E) \equiv 1$, we thus obtain that
    \begin{equation}\label{intensity_point_source}
        \Im(\xv,\phi) = \abs{ \Em(\xv,\phi) }^2 = \abs{ \FI\kl{ \chi_\Omega(\xiv) e^{i \phi(\xiv)} }(\xv) }^2 \,, 
        \qquad
        \forall \, \xv \in \R^2 \,.
    \end{equation}
Hence, in case of no aberrations, i.e., $\phi \equiv 0$, the intensity $\Im = \abs{ \FI\kl{ \chi_\Omega } }^2$ of the point source is exactly the diffraction limited point-spread function (PSF) of the system.

\subsection{Key ideas and generic unwrapping algorithm}\label{subsect_keyidea}

We now return to the phase unwrapping problem \eqref{prob_unwrap} and derive a generic unwrapping algorithm based on the concept of digital WFSs. The key observation here is that
    \begin{equation*}
        e^{i \phiw(\xiv)} \overset{\eqref{prob_unwrap}}{=} e^{i \kl{\phi(\xiv) \mod 2 \pi}} = e^{i\phi(\xiv)} \,,
        \qquad
        \forall \, \xiv \in \Omega \subseteq \R^2 \,,
    \end{equation*}
and thus, by its definition \eqref{model_aperture}, there follows the equivalence of the electric fields
    \begin{equation*}
        \Em(\xv,\phi) = \Em(\xv,\phiw) \,, 
        \qquad
        \forall \, \xv \in \R^2 \,.
    \end{equation*}
This means that when the phase $\phi$ and its wrapped form $\phiw$ are considered as wavefront aberrations in the optical system corresponding to Figure~\ref{fig_image_formation_model}, their effect on the outgoing electric field $\Em$ is equivalent. Consequently, any WFS using the electric field $\Em$ as an input for measuring the wavefront aberration is unable to distinguish between the non-wrapped and wrapped phases $\phi$ and $\phiw$, respectively. Crucially, though, most wavefront reconstructors are optimized for reconstructing aberrations with certain smoothness properties, in particular those designed for the SH and Fourier-type WFSs in astronomical applications. Hence, the result of applying such a reconstructor to $\Em(\xv,\phiw)$ essentially yields a smooth and unwrapped phase, denoted by $\phir$. 

From here, only two further observations are necessary. The first one is that WFSs are typically based on the assumption that the incoming light stems from a sufficiently distant point source, which corresponds to $E \equiv \delta_0$ or $\F(E) \equiv 1$. This is necessary, since otherwise in \eqref{model_aperture} the (non-zero) phase of $\F(E)$ mixes with the aberration $\phi$, making a subsequent separation of the two difficult \cite{Hubmer_Sherina_Ramlau_Pircher_Leitgeb_2023}. Secondly, an electric field $\Em$ aberrated by a given wrapped phase $\phiw$ can be obtained \emph{completely digitally}. For example, in the case of a distant point source one can simply define, cf.\ \eqref{intensity_point_source},
    \begin{equation}\label{def_Emp}
        \Emp(\xv,\phiw) :=  \FI\kl{ \chi_\Omega e^{i \phiw} }(\xv) \,, 
        \qquad
        \forall \, \xv \in \Rt \,.
    \end{equation}
In order to digitally obtain WFS measurements, one can then simply apply the WFS forward model on which the subsequently employed reconstruction algorithm is based.  

Combining these key ideas and observations, we can thus formulate the following generic phase unwrapping algorithm, which is the basis for our further considerations.

\begin{algo}[Generic digital WFS phase unwrapping algorithm]\label{algo_generic}
Given the wrapped phase $\phiw(\xiv)$ for all $\xiv \in \Omega \subset \Rt$, compute the unwrapped phase $\phir(\xiv)$ via the steps:
\begin{enumerate}
    \item Compute $\Emp(x,\phiw)$ as in \eqref{def_Emp}, i.e., $\Emp(\xv,\phiw) =  \FI\kl{ \chi_\Omega e^{i \phiw} }(\xv)$.
    \item Choose a WFS and create corresponding sensor measurements using $\Emp(x,\phiw)$.
    \item Choose a WFS reconstructor and run it on the measurements to obtain $\phir$.
\end{enumerate}
\end{algo}

As the name suggests, Algorithm~\ref{algo_generic} is more of a general concept than a directly implementable algorithm. In particular, the concrete realization of Steps 2 and 3 will strongly impact the overall complexity and performance of the resulting method. Two possible choices, based on both SH- and Fourier-type WFSs, are considered below.

\begin{remark}
The phase unwrapping problem can be considered both in a continuous and a discrete setting. Our generic Algorithm~\ref{algo_generic} and its specific instances discussed below are operating in the continuous setting corresponding of problem \eqref{prob_unwrap}. Besides the greater generality of the continuous setting, a main reason for this is that the underlying physical models of the considered (digital) wavefront sensors operate in the continuous setting as well. When our proposed algorithms are applied in the discrete setting of numerical computations, the involved operations need to be replaced by their discrete counterparts. This essentially concerns the continuous Fourier transform, which has to be replaced by the (fast) discrete Fourier transform, as is done in the numerical experiments presented in Section~\ref{sect_num_exp}. Note that this situation is the same for the global comparison algorithms based on the PE and the TIE, which were introduced above and are discussed in detail in Section~\ref{subsect_impl}. These methods are formulated in terms of (continuous) partial differential equations, which are discretized in numerical applications.  
\end{remark}

\subsection{The digital Shack-Hartmann wavefront sensor}\label{subsect_digSHWFS}

In this section, we consider the specific choice of the SH-WFS in Step~2 of Algorithm~\ref{algo_generic}. In particular, we discuss how corresponding WFS measurements can be calculated completely digitally, following the mathematical framework derived in \cite{Hubmer_Sherina_Ramlau_Pircher_Leitgeb_2023}.

As discussed in Section~\ref{subsect_SHWFS}, the SH-WFS works via a lenslet array, each lenslet of which focuses the incoming light of a distant point source onto a corresponding subaperture of detector. Mathematically, this can be modelled by splitting the aperture domain $\Omega$ in \eqref{model_aperture} into a number of smaller subdomains $\Ojk \subset \Omega$, defined by
    \begin{equation}\label{def_Ojk}
        \Ojk := 
        \Kl{\xiv \in \R^2 \, \vert \, \Tjk^{-1}(\xiv) \in [ -J/2, J/2 ) \times [-K/2, K/2) } \,, 
    \end{equation}
for indices $(j,k) \in \Lambda \subset \Z^2$, where $J, K \in \R^+$ and the translation operator $\Tjk$ is defined by
    \begin{equation*}
        \Tjk(\xiv) := (\xi_1 + jJ, \xi_2 + k K) \,,
        \qquad
        \forall \, \xiv \in \R^2 \,.
    \end{equation*}
Note that we implicitly assume that the domain $\Omega$ can be split evenly into the subdomains $\Ojk$ for a certain, potentially infinite, set of indices $\Lambda$. Next, let $\chi_{j,k}$ denote the characteristic functions of $\Ojk$ and define
    \begin{equation}\label{def_Emjk}
		\Emjk(\xv,\phi) := \FI\kl{ \chi_{0,0}(\xiv) \F(\Em(\cdot,\phi))(\Tjk(\xiv))}(\xv)\,,
		\qquad
		\forall \xv \in \R^2 \,.
	\end{equation}
This definition of $\Emjk$ corresponds to the Fourier optics description of the electric fields arriving at each of the subapertures of the SH-WFS, and is illustrated in Figure~\ref{fig_subimage_definition}.

\begin{figure}
    \centering
    \begin{tikzpicture}[x=0.75pt,y=0.75pt,yscale=-0.85,xscale=0.85]

    
        \draw [line width=1.5] (10,190) -- (130,190)(51,90) -- (51,220) (123,185) -- (130,190) -- (123,195) (46,97) -- (51,90) -- (56,97);
    
        \draw [draw opacity=0][pattern=_7gn8i7dj0,pattern size=15pt,pattern thickness=0.75pt,pattern radius=0pt, pattern color={rgb, 255:red, 0; green, 0; blue, 0}] (20,100) -- (120,100) -- (120,210) -- (20,210) -- cycle;
    
        \draw [color={rgb, 255:red, 255; green, 255; blue, 255}, draw opacity=1][fill={rgb, 255:red, 255; green, 255; blue, 255},fill opacity=1] (100,100) -- (130,100) -- (130,130) -- (100,130) -- cycle;

        \draw (107,101.4) node [anchor=north west][inner sep=0.75pt]{$\Em$};
        \draw (22,223) node [anchor=north west][inner sep=0.75pt][align=left]{Image plane};
    
    
        \draw (152,133.4) node [anchor=north west][inner sep=0.75pt]{$\F$};
        \draw (140,160) -- (178,160);
        \draw [shift={(180,160)}, rotate = 180][color={rgb, 255:red, 0; green, 0; blue, 0}][line width=0.75] (10.93,-3.29) .. controls (6.95,-1.4) and (3.31,-0.3) .. (0,0) .. controls (3.31,0.3) and (6.95,1.4) .. (10.93,3.29);

    
        \draw [line width=1.5] (190,190) -- (310,190)(231,90) -- (231,220) (303,185) -- (310,190) -- (303,195) (226,97) -- (231,90) -- (236,97);
    
        \draw [draw opacity=0][pattern=_7gn8i7dj0,pattern size=15pt,pattern thickness=0.75pt,pattern radius=0pt, pattern color={rgb, 255:red, 0; green, 0; blue, 0}][dash pattern={on 4.5pt off 4.5pt}] (200,100) -- (300,100) -- (300,210) -- (200,210) -- cycle;
    
        \draw [color={rgb, 255:red, 0; green, 0; blue, 0}, draw opacity=1][dash pattern={on 0.84pt off 2.51pt}] (276,115) -- (306,115) -- (306,145) -- (276,145) -- cycle;
        \draw [color={rgb, 255:red, 0; green, 0; blue, 0}, draw opacity=1][dash pattern={on 0.84pt off 2.51pt}] (246,115) -- (276,115) -- (276,145) -- (246,145) -- cycle;
        \draw [color={rgb, 255:red, 0; green, 0; blue, 0}, draw opacity=1][dash pattern={on 0.84pt off 2.51pt}] (216,115) -- (246,115) -- (246,145) -- (216,145) -- cycle;
        \draw [color={rgb, 255:red, 0; green, 0; blue, 0}, draw opacity=1][dash pattern={on 0.84pt off 2.51pt}] (216,145) -- (246,145) -- (246,175) -- (216,175) -- cycle;
        \draw [color={rgb, 255:red, 0; green, 0; blue, 0}, draw opacity=1][dash pattern={on 0.84pt off 2.51pt}] (246,145) -- (276,145) -- (276,175) -- (246,175) -- cycle;
        \draw [color={rgb, 255:red, 0; green, 0; blue, 0}, draw opacity=1][dash pattern={on 0.84pt off 2.51pt}] (276,145) -- (306,145) -- (306,175) -- (276,175) -- cycle;
        \draw [color={rgb, 255:red, 0; green, 0; blue, 0}, draw opacity=1][dash pattern={on 0.84pt off 2.51pt}] (216,175) -- (246,175) -- (246,205) -- (216,205) -- cycle;
        \draw [color={rgb, 255:red, 0; green, 0; blue, 0}, draw opacity=1][dash pattern={on 0.84pt off 2.51pt}] (246,175) -- (276,175) -- (276,205) -- (246,205) -- cycle;
        \draw [color={rgb, 255:red, 0; green, 0; blue, 0}, draw opacity=1][dash pattern={on 0.84pt off 2.51pt}] (276,175) -- (306,175) -- (306,205) -- (276,205) -- cycle;

        \draw (450,150) -- (431.41,168.59);
        \draw [shift={(430,170)}, rotate = 315][color={rgb, 255:red, 0; green, 0; blue, 0}][line width=0.75] (10.93,-3.29) .. controls (6.95,-1.4) and (3.31,-0.3) .. (0,0) .. controls (3.31,0.3) and (6.95,1.4) .. (10.93,3.29);
    
        \draw [color={rgb, 255:red, 255; green, 255; blue, 255}, draw opacity=1][fill={rgb, 255:red, 255; green, 255; blue, 255}, fill opacity=1] (270,100) -- (300,100) -- (300,130) -- (270,130) -- cycle;

        \draw (271,102.4) node [anchor=north west][inner sep=0.75pt]{$\F(\Em)$};
        \draw (202,223) node [anchor=north west][inner sep=0.75pt][align=left]{Fourier plane};
    
    
        \draw (330,133) node [anchor=north west][inner sep=0.75pt][align=left]{cutout};
        \draw (320,130) .. controls (359.6,100.3) and (399.2,158.81) .. (438.8,130.87);
        \draw [shift={(440,130)}, rotate = 503.13] [color={rgb, 255:red, 0; green, 0; blue, 0}][line width=0.75] (10.93,-3.29) .. controls (6.95,-1.4) and (3.31,-0.3) .. (0,0) .. controls (3.31,0.3) and (6.95,1.4) .. (10.93,3.29);

    
        \draw [line width=1.5] (370,190) -- (490,190)(411,90) -- (411,220) (483,185) -- (490,190) -- (483,195) (406,97) -- (411,90) -- (416,97);
    
        \draw (425,88.4) node [anchor=north west][inner sep=0.75pt][font=\small]{$\chi_{j,k} \F(\Em)$};
        \draw (452,153) node [anchor=north west][inner sep=0.75pt][align=left]{shift};
        \draw (382,223) node [anchor=north west][inner sep=0.75pt][align=left]{Fourier plane};
    
        \draw [color={rgb, 255:red, 0; green, 0; blue, 0}, draw opacity=1][pattern=_7gn8i7dj0,pattern size=15pt,pattern thickness=0.75pt,pattern radius=0pt, pattern color={rgb, 255:red, 0; green, 0; blue, 0}][dash pattern={on 4.5pt off 4.5pt}] (396,175) -- (426,175) -- (426,205) -- (396,205) -- cycle;
    
        \draw [color={rgb, 255:red, 0; green, 0; blue, 0}, draw opacity=1][pattern=_7gn8i7dj0,pattern size=15pt,pattern thickness=0.75pt,pattern radius=0pt, pattern color={rgb, 255:red, 0; green, 0; blue, 0}] (456,115) -- (486,115) -- (486,145) -- (456,145) -- cycle;

    
        \draw (515,133.4) node [anchor=north west][inner sep=0.75pt]{$\F$};
        \draw (510,160) -- (548,160);
        \draw [shift={(550,160)}, rotate = 180] [color={rgb, 255:red, 0; green, 0; blue, 0}][line width=0.75] (10.93,-3.29) .. controls (6.95,-1.4) and (3.31,-0.3) .. (0,0) .. controls (3.31,0.3) and (6.95,1.4) .. (10.93,3.29);
    
    
        \draw [line width=1.5] (560,190) -- (680,190)(601,90) -- (601,220) (673,185) -- (680,190) -- (673,195) (596,97) -- (601,90) -- (606,97);

        \draw [draw opacity=0][pattern=_7gn8i7dj0,pattern size=15pt,pattern thickness=0.75pt,pattern radius=0pt, pattern color={rgb, 255:red, 0; green, 0; blue, 0}] (573,100) -- (673,100) -- (673,210) -- (573,210) -- cycle;

        \draw [color={rgb, 255:red, 255; green, 255; blue, 255},draw opacity=1][fill={rgb, 255:red, 255; green, 255; blue, 255 },fill opacity=1] (643,100) -- (673,100) -- (673,130) -- (643,130) -- cycle;

        \draw (647,101.4) node [anchor=north west][inner sep=0.75pt]{$\Emjk$};
        \draw (572,223) node [anchor=north west][inner sep=0.75pt][align=left] {Image plane};

    \end{tikzpicture}
    \caption{Illustration of the definition of the electric fields $\Emjk$ given in \eqref{def_Emjk}.}
    \label{fig_subimage_definition}
\end{figure}
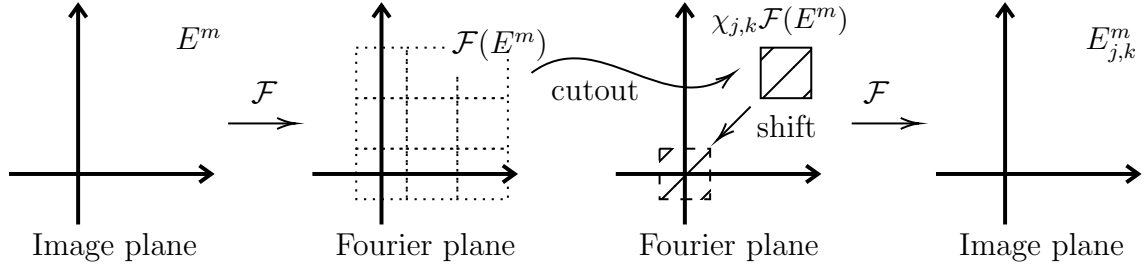

Next, note that from the defining model equation \eqref{model_aperture} for $\Em$ it follows that if $E$ is a point source, i.e., if it satisfies $\F(E) \equiv 1$, then there holds
    \begin{equation*}
    \begin{split}
        \Emjk(\xv,\phi) & = 
        \FI\kl{\chi_{0,0}(\xiv) e^{i \phi(\Tjk(\xiv))}\F(E)(\Tjk(\xiv))}(\xv) 
        =
        \FI\kl{\chi_{0,0}(\xiv) e^{i \phi(\Tjk(\xiv))}}(\xv) \,.
    \end{split}
    \end{equation*}
Comparing this with \eqref{intensity_point_source}, we observe that the corresponding intensity $\Imjk := \abs{\Emjk}^2$ essentially amounts to a distorted version of $\abs{\FI\kl{\chi_{0,0}}}^2$, the diffraction-limited PSF. Furthermore, we see that the distortion itself is proportional to the local aberration $\phi$ on the subdomain $\Ojk$, or in a first-order approximation to its average gradient. In the following proposition, we see that this average gradient is responsible for the shifts of the focal spots on the subapertures of the SH-WFS, as discussed in Section~\ref{subsect_SHWFS}.

\begin{proposition}
Let $\Emjk$ be defined as in \eqref{def_Emjk}, let $\Em$ satisfy \eqref{model_aperture} and assume that $\F(E) \equiv 1$. Furthermore, assume that on each subdomain $\Ojk$ the wavefront aberration $\phi$ can be reasonably well approximated by a linear function, i.e., there holds
    \begin{equation}
        \phijk(\xiv) := \phi(\Tjk(\xiv)) \approx 2\pi \kl{ \cjk +  \svjk \cdot \xiv } \,,
        \qquad
        \forall \, \xiv \in \Omega_{0,0} \,. 
    \end{equation}
Then for all indices $j,k$ and all $\xv \in \Rt$ there holds
    \begin{equation}\label{subimages_shifted}
        \abs{\Emjk(\xv  - (\svjk - \svzz),\phi) }
        \approx
        \abs{  \Em_{0,0}(\xv ,\phi) } \,.
    \end{equation}
\end{proposition}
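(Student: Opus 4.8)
The plan is to reduce both sides of \eqref{subimages_shifted} to one and the same expression by exploiting that, under the linear approximation, the slope $\svjk$ acts on the subaperture field $\Emjk$ purely as a rigid translation (via Fourier modulation–translation duality), while the piston $\cjk$ and any higher‑order remainder contribute only a unimodular prefactor that disappears upon taking absolute values.

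First I would invoke the point‑source hypothesis $\F(E)\equiv 1$ together with \eqref{model_aperture} and \eqref{def_Emjk}, which — exactly as already recorded in the excerpt just before the proposition — gives
\[
    \Emjk(\xv,\phi) = \FI\kl{ \chi_{0,0}(\xiv)\, e^{i \phijk(\xiv)} }(\xv)\,,
    \qquad \phijk(\xiv) := \phi(\Tjk(\xiv))\,.
\]
Inserting the assumed linearization $\phijk(\xiv) \approx 2\pi\kl{\cjk + \svjk \cdot \xiv}$ and pulling the $\xiv$‑independent factor $e^{2\pi i \cjk}$ out of the integral yields
\[
    \Emjk(\xv,\phi) \approx e^{2\pi i \cjk}\, \FI\kl{ \chi_{0,0}(\xiv)\, e^{2\pi i\, \svjk \cdot \xiv} }(\xv)\,.
\]
Next I would apply the modulation–translation property of the inverse Fourier transform: with the sign convention fixed in Section~\ref{subsect_Fourier}, multiplying the argument by $e^{2\pi i\,\svjk\cdot\xiv}$ translates $\FI(\chi_{0,0})$, i.e.\ $\FI\kl{\chi_{0,0}(\cdot)e^{2\pi i\svjk\cdot(\cdot)}}(\xv) = \FI(\chi_{0,0})(\xv+\svjk)$. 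Hence
\[
    \Emjk(\xv,\phi) \approx e^{2\pi i \cjk}\, \FI(\chi_{0,0})(\xv + \svjk)\,,
\]
and specializing to $(j,k)=(0,0)$ gives $\Emzz(\xv,\phi) \approx e^{2\pi i c_{0,0}}\, \FI(\chi_{0,0})(\xv + \svzz)$. Evaluating the general relation at the shifted point $\xv - (\svjk - \svzz)$ turns the argument of $\FI(\chi_{0,0})$ into $\xv + \svzz$, so $\Emjk(\xv-(\svjk-\svzz),\phi)$ and $\Emzz(\xv,\phi)$ agree up to the unimodular constants $e^{2\pi i \cjk}$ and $e^{2\pi i c_{0,0}}$; taking $\abs{\cdot}$ removes both and produces \eqref{subimages_shifted}.

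There is no genuine analytic obstacle here: every step is an exact identity apart from the controlled linearization error in $\phijk$. The only points needing care are (i) the bookkeeping of the Fourier sign convention, which is precisely what dictates that the compensating shift is $\svjk-\svzz$ and not its negative, and (ii) making rigorous the sense in which the $\approx$ of the hypothesis propagates to the $\approx$ of the conclusion — one would phrase this as: if the linearization error of $\phijk$ is uniformly small on $\Ozz$, then, since $\abs{e^{i\psi_1}-e^{i\psi_2}}\le\abs{\psi_1-\psi_2}$ and $\FI$ maps $L^1(\Omega_{0,0})$ functions to bounded continuous functions, the error on $\abs{\Emjk}$ is uniformly small in $\xv$. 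It is also worth remarking that the argument uses only that the \emph{linear} part enters through modulation; the constant $\cjk$ and the discarded quadratic-and-higher remainder would, if retained, sit inside the modulus and be exactly the quantities the approximation drops.
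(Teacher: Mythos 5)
Your argument is correct. Note, however, that the paper does not actually write out a proof at all: it disposes of the proposition in one line by citing Proposition~3.8 of the external reference \cite{Hubmer_Sherina_Ramlau_Pircher_Leitgeb_2023} (specialized to $\F(E)\equiv 1$), where the corresponding computation is carried out in a more general setting that does not assume a point source. What you supply is therefore a self-contained derivation of the special case the paper needs, and it is the natural one: starting from the identity $\Emjk(\xv,\phi)=\FI\kl{\chi_{0,0}(\xiv)e^{i\phijk(\xiv)}}(\xv)$ already recorded in the text, you factor out the piston term $e^{2\pi i\cjk}$, apply the modulation--translation rule $\FI\kl{\chi_{0,0}(\cdot)e^{2\pi i\svjk\cdot(\cdot)}}(\xv)=\FI(\chi_{0,0})(\xv+\svjk)$ (the sign is right for the paper's convention $\FI(g)(\xv)=\int g(\xiv)e^{2\pi i\xiv\cdot\xv}\,d\xiv$), and observe that evaluating at $\xv-(\svjk-\svzz)$ aligns both sides to $\FI(\chi_{0,0})(\xv+\svzz)$ up to unimodular constants killed by $\abs{\cdot}$. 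Your closing remark on how the hypothesis' $\approx$ propagates to the conclusion, via $\abs{e^{i\psi_1}-e^{i\psi_2}}\le\abs{\psi_1-\psi_2}$ and the boundedness of $\FI$ on $L^1(\Ozz)$, is a genuine addition: neither the proposition's statement nor the cited source is quoted here with that quantitative control, so making it explicit is what turns the heuristic $\approx$ into a statement with an error bound. The trade-off is simply that the paper's citation inherits the more general result (arbitrary incident fields $E$), whereas your computation is tied to $\F(E)\equiv 1$ — which is all that is used here.
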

\begin{proof}
This assertion follows directly from \cite[Proposition~3.8]{Hubmer_Sherina_Ramlau_Pircher_Leitgeb_2023} using $\F(E) \equiv 1$.    
\end{proof}

From \eqref{subimages_shifted} one can see that the intensity images on each subaperture of the CCD detector of the SH-WFS are essentially shifted versions of each other, and that the relative shifts $(\svjk - \svzz)$ correspond to the average slopes of the local wavefront aberration. These relative shifts can e.g.\ be estimated via matching the subimages $\abs{\Emjk}$ and $\abs{\Em_{0,0}}$; see e.g.\ \cite{Kumar_Drexler_Leitgeb_2013,Kumar_Wurster_Salas_Ginner_Drexler_Leitgeb_2017,Hubmer_Sherina_Ramlau_Pircher_Leitgeb_2023}. Alternatively, as suggested in \cite{Hubmer_Sherina_Ramlau_Pircher_Leitgeb_2023} they can also be calculated via
    \begin{equation}\label{subimages_slopes}
        \svjk - \svzz
        \, \approx \,
        \frac{\int_\Rt \xv \abs{\Emzz(\xv,\phi) } \, d\xv}{\int_\Rt \abs{\Emzz(\xv,\phi) } \, d\xv}
        -
        \frac{\int_\Rt \xv \abs{  \Emjk(\xv,\phi) } \, d\xv}{\int_\Rt \abs{\Emjk(\xv,\phi) } \, d\xv} \,,
    \end{equation}
i.e., via comparing the centers of mass of the subimages $\abs{\Emjk}$, as is commonly done for the SH-WFS \cite{Gilles_Ellerbroek_2006}. Once the relative shifts $(\svjk - \svzz)$ have been found, what remains is to compute an approximation $\phi$ from these measurements, i.e., Step~3 in Algorithm~\ref{algo_generic}. For this, any wavefront reconstruction method for the SH-WFS can be used \cite{Roddier_1999}. In the numerical experiments presented below, the fast and stable Cumulative Reconstructor with Domain Decomposition (CuReD) 
\cite{Zhariy_Neubauer_Rosensteiner_Ramlau_2011,Rosensteiner_2011_01,Rosensteiner_2011_02} is used. 

Summarizing the above considerations, we thus arrive at the following digital SH-WFS phase unwrapping algorithm as a special case of the generic Algorithm~\ref{algo_generic}.

\begin{algo}[Digital SH-WFS phase unwrapping algorithm]\label{algo_dSH_WFS}
Given the wrapped phase $\phiw(\xiv)$ for all $\xiv \in \Omega \subset \Rt$, compute the unwrapped phase $\phir(\xiv)$ via the following steps:
\begin{enumerate}
    \item Select $J,K \in \R^+$ and subdomains $\Ojk \subset \Omega$ with $\bigcup_{j,k} \Ojk = \Omega$ as in \eqref{def_Ojk}.
    \item For each $j,k$ compute the fields $\Emjk$ via \eqref{def_Emjk} with $\Em(\xv,\phi) = \Emp(\xv,\phiw)$, i.e.,
        \begin{equation*}
        \begin{split}
            \Emjk(\xv,\phiw) = \FI\kl{ \chi_{0,0}(\xiv) e^{i \phiw(\Tjk(\xiv))} }(\xv) \,.
        \end{split}
        \end{equation*}
    \item Estimate the relative slopes $(\svjk-\svzz)$ from the fields $\Emjk$ either through image fitting using \eqref{subimages_shifted}, or compute them directly via the center of mass relation \eqref{subimages_slopes}.
    \item Reconstruct the wavefront aberration $\phi$ from the relative slopes $(\svjk-\svzz)$ using any wavefront reconstruction algorithm for the SH-WFS such as CuReD.
    \item Select the reconstructed wavefront aberration as the unwrapped phase estimate $\phir$.
\end{enumerate}
\end{algo}

\begin{remark}
As with the actual SH-WFS, the number and thus the size of the subapertures $\Ojk$ has a direct impact on the overall reconstruction quality in the digital case. On the one hand, if the number of subapertures is too small, and thus their relative size too large, then the reconstructed wavefront will be comparatively smoother than the original wavefront, leading to insufficient resolution of finer details. On the other hand, if too many and thus too small apertures are chosen, then the reconstruction becomes susceptible to noise. See the results presented in Figure~\ref{fig_OCTOPUS_results_SH} for an illustrative example. Hence, a proper balance between the number and size of the subapertures needs to be struck to address this inherent ill-posedness effect. Doing so in an algorithmic way is rather difficult, requiring estimates of both the amount of noise and the expected smoothness of the original wavefront aberration. While not entirely circumventing these issues, the use of additional regularization in the wavefront reconstruction step, or alternatively the use of an ``averaged'' digital SH-WFS, can be beneficial here \cite{Hubmer_Sherina_Ramlau_Pircher_Leitgeb_2023}.
\end{remark}

\subsection{Digital Fourier-type wavefront sensors}\label{subsect_digFWFS}

In this section, we consider the choice of Fourier-type WFSs in Step~2 of Algorithm~\ref{algo_generic}. As we shall see below, these WFSs have a simple mathematical description in terms of Fourier optics, and thus the computation of digital WFS measurements required in this step is even more straightforward than for the digital SH-WFS considered above.

First, compare the two-lens imaging system depicted in Figure~\ref{fig_image_formation_model} with the schematic depiction of a Fourier-type WFS in Figure~\ref{fig_fwfs}. The main difference is that instead of the wavefront aberration $\phi$ and the aperture $\Omega$, in Figure~\ref{fig_fwfs} there is an optical element characterized by an OTF. However, these two settings are closely related: As in Figure~\ref{fig_image_formation_model} let $E$ denote the incident field entering the Fourier-type WFS, and let $\Em$ denote the filtered field at the conjugate plane. Then, following the Fourier-optics description of this measurement system setting (cf.~\cite{HuNeuSha_2023,Fauv16,Goodman_2005}), we obtain that
    \begin{equation}\label{model_pyramid_OTF}
        \Em(\xv) =  \FI\kl{ \OTF_\psi(\xiv) \F(E)(\xiv) }(\xv) \,,
    \end{equation}
where, depending on the shape $\psi$ of the chosen optical element, the OTF is given by
    \begin{equation}\label{eq_helper_1}
        \OTF_\psi(\xiv) := e^{i\psi(\xiv)} \,.
    \end{equation}
Comparing this with \eqref{model}, we can see that the shape function $\psi$ now takes the role of the wavefront aberration $\phi$. For the different optical elements shown in Figure~\ref{fig_otfs}, the corresponding shape functions $\psi$ are listed in Table~\ref{table_psi}.

\begin{table}[ht!]
\centering
    \begin{tabular}{|c|c|c|c|}
        \hline  
        & 4-sided PWFS
        & x-Roof WFS
        & 3-sided PWFS 
        \\
		\hline 
        $\psi(\xi_1,\xi_2)=$ 
        & $c(|\xi_1|+|\xi_2|)$ 
        & $c|\xi_1|$ 
        & $\begin{cases} 
            -2c\xi_1 \,, & \frac{-\pi}{3}< \arctan{\frac{\xi_1}{\xi_2}} < \frac{\pi}{3} \\ c(\xi_1-\sqrt{3}\xi_2)\,, 
            & \frac{\pi}{3}< \arctan{\frac{\xi_1}{\xi_2}} < \pi \\ c(\xi_1+\sqrt{3}\xi_2) \,, & \text{otherwise}
            \end{cases}$ 
        \\
        \hline 
        & Cone WFS 
        & y-Roof WFS
        & iQuad WFS 
        \\
		\hline 
        $\psi(\xi_1,\xi_2)=$ 
        & $c\sqrt{|\xi_1|^2+|\xi_2|^2}$ 
        & $c|\xi_2|$ 
        &  $\begin{cases} 
            \frac{\pi}{2}\,, & \xi_1 \xi_2 < 0 \,, \\ 0 \,, & \text{otherwise} 
            \end{cases}$ \\
        \hline 
	\end{tabular}
    \caption{Shape functions $\psi(\xiv)$, with $\xiv = (\xi_1,\xi_2)$, corresponding to the Fourier-type WFSs depicted in Figure~\ref{fig_fwfs}. The constant $c>0$ relates to the apex angle of the pyramidal prisms. The typical choice $c \approx 1$ is used in the numerical examples presented in Section~\ref{sect_num_exp}.}
    \label{table_psi}
\end{table}

For Fourier-type WFSs in astronomical settings, one typically assumes \cite{Fauv16} that
    \begin{equation}\label{eq_helper_2}
        E(\xv) = E(\xv,\phi) := \chi_\Omega(\xv) e^{-i \phi(\xv)} \,, 
    \end{equation}
where $\phi$ is the wavefront aberration to be reconstructed, and the domain $\Omega$ corresponds to the aperture of the imaging system. This is physically realized by an additional lens placed in front of the imaging system in Figure~\ref{fig_fwfs} in case the incoming light stems from a sufficiently distant point source. With this, \eqref{model_pyramid_OTF} becomes
    \begin{equation}\label{model_pyramid}
        \Em_\psi(\xv,\phi) := \Em(\xv) = \FI\kl{ e^{i \psi(\xiv)} \F\kl{\chi_\Omega e^{-i \phi}}(\xiv) }(\xv) \,,
    \end{equation}
where we have used \eqref{eq_helper_1} and \eqref{eq_helper_2}. Finally, the camera/detector in the conjugate plane of the imaging system in Figure~\ref{fig_fwfs} measures the intensity of the field $\Em$, i.e.,
    \begin{equation}\label{pyramid_intensity}
        \Im_\psi(\xv,\phi) := \abs{ \Em_\psi(\xv,\phi) }^2
        =
        \abs{\FI\kl{ e^{i \psi(\xiv)} \F\kl{\chi_\Omega e^{-i \phi}}(\xiv) }(\xv)}^2 \,.
    \end{equation}

For the reconstruction of the aberration $\phi$ from the intensity measurements $I_\psi$, a number of different reconstruction algorithms have been proposed. Most of these depend heavily on the shape of the chosen optical element, i.e., on the concrete form of $\psi$, and are often based on linearizations of \eqref{pyramid_intensity}. An overview of reconstruction methods for the most commonly used 4-sided PWFS can, e.g., be found in \cite{ShaHuRa20}. Additionally, AI-based methods such as Deep Optics PWFS have recently been proposed \cite{Vera_AO4ELT7proc}. For arbitrary choices of $\psi$, including those which do not correspond to a pyramidal prism but can, e.g., be realized by a spatial light modulator, the NOnlinear Pyramid Extension (NOPE)~\cite{HuNeuSha_2023} has been proposed.   

Summarizing the above considerations the following digital Fourier-type WFS phase unwrapping algorithm is proposed as a special case of the generic Algorithm~\ref{algo_generic}.

\begin{algo}[Digital Fourier-type WFS phase unwrapping algorithm]\label{algo_dFb_WFS}
Given the wrapped phase $\phiw(\xiv)$ for all $\xiv \in \Omega \subset \Rt$, compute the unwrapped phase $\phir(\xiv)$ via:
\begin{enumerate}
    \item Compute the intensity measurements $\Im_\psi$ via \eqref{pyramid_intensity} with $\phi = \phiw$, i.e., 
    \begin{equation*}
        \Im_\psi(\xv,\phiw) = \abs{\FI\kl{ e^{i \psi(\xiv)} \F\kl{\chi_\Omega e^{-i \phiw}}(\xiv) }(\xv)}^2 \,.
    \end{equation*}
    \item Reconstruct the wavefront aberration $\phi$ from the intensity measurements $\Im_\psi$ using any wavefront reconstruction algorithm for Fourier-type WFSs such as the NOPE.
    \item Select the reconstructed wavefront aberration as the unwrapped phase estimate $\phir$.
\end{enumerate}
\end{algo}

As noted above, the 4-sided PWFS is the most commonly used Fourier-type WFS in practice. In \cite{Shat13}, the Preprocessed Cumulative Reconstructor with Domain Decomposition (PCuReD), based on CuReD for the SH-WFS discussed above, is suggested as a suitable and highly efficient reconstructor, which gives rise to the following algorithm.

\begin{algo}[Digital 4-sided PWFS phase unwrapping algorithm]\label{algo_dP_WFS}
Given the wrapped phase $\phiw(\xiv)$ for all $\xiv =(\xi_1,\xi_2)\in \Omega \subset \Rt$, compute the unwrapped phase $\phir(\xiv)$ via:
\begin{enumerate}
    \item Compute the intensity measurements $\Im_\psi$ via \eqref{pyramid_intensity} with $\phi = \phiw$ and choosing $\psi$ corresponding to the 4-sided PWFS in Table~\ref{table_psi} for some $c>0$, i.e., 
    \begin{equation*}
        \Im_\psi(\xv,\phiw) = \abs{\FI\kl{ e^{i c(|\xi_1| + |\xi_2|)} \F\kl{\chi_\Omega e^{-i \phiw}}(\xiv) }(\xv)}^2 \,.
    \end{equation*}
    \item Reconstruct the aberration $\phi$ from the intensity measurements $\Im_\psi$ using PCuReD.
    \item Select the reconstructed wavefront aberration as the unwrapped phase estimate $\phir$.
\end{enumerate}
\end{algo}

\begin{remark}
Additional extensions of the Fourier-type WFS unwrapping algorithms are possible by incorporating the physical concept of modulation. Especially in astronomical AO systems, modulation of the electric field is often added to the Fourier-type WFS setup. E.g.\ by moving around the apex of the optical element, the light distribution to all faces of the optical element is improved, helping with certain nonlinearity issues \cite{ShaHuRa20}.

Mathematically, this modulation is included in the Fourier-optics model \eqref{model_pyramid} via adding a time-dependent modulation function $\vphi^{\mod}_{t}(\xv)$ in the image plane, i.e.,
    \begin{equation}\label{model_pyramid_modulated}
        \Em_\psi(\xv,\phi,t) = \FI\kl{ e^{i \psi(\xiv)} \F\kl{\chi_\Omega e^{-i (\phi + \vphi^{\mod}_{t})}}(\xiv) }(\xv) \,.
    \end{equation}
In the case of a round aperture $\Omega$ with diameter $D$, a circular modulation by a radius $r$ around the apex is e.g.\ described by the modulation function
    \begin{equation}\label{i3}
        \vphi^{\mod}_{t}(\xv) = \frac{2\pi r}{D} \kl{ \xv_1 \cos(2\pi t) + \xv_2 \sin(2\pi t)} \,.
    \end{equation}
The resulting intensity on the detector is then modelled by the time average
    \begin{equation}\label{i4}
        \Tilde{I}^m_\psi(\xv,\phi) := \frac{1}{T}\int_0^T \abs{\Em_\psi(\xv,\phi,t)}^2 \,dt  \,,
    \end{equation}
where $T$ denotes the duration of one full modulation period. Other modulation scenarios are also possible~\cite{ShaHuRa20}. Hence, one can again follow the concept for phase unwrapping discussed above, i.e., computing $\Tilde{I}^m_\psi(\xv,\phiw)$ and using an appropriate wavefront reconstructor to obtain an unwrapped phase $\phir$. This then leads to a complete new class of modulated Fourier-type WFS phase unwrapping algorithms.
\end{remark}

\section{Numerical experiments}\label{sect_num_exp}
In this section, we numerically evaluate our algorithms introduced in Section~\ref{sect_digWFS} on a real-world phase unwrapping problem appearing in FSOC, and compare the results to those obtained with state-of-the-art algorithms.

In FSOC, laser beams are used for the wireless transmission of data in, e.g., optical laser communications between satellites and ground-based stations. Thanks to high bandwidths, laser communication provides large and fast data transfer rates. However, when the laser signal travels through the Earth's atmosphere, it gets disrupted by turbulence. Recent research in FSOC focuses on using AO to overcome this problem; see e.g.\ \cite{Osborn_2021,Farley_2022,Stotts_2021,Martinez_2019,Torres_2022}. In the design phase of AO systems for FSOC, simulations of laser light propagation including diffraction and interference are required. To this end, physical optics propagation is commonly employed \cite{Basden_2018}, which uses Fourier transforms to compute phase angles. As these phase angles contain $2\pi$ ambiguities, fast and accurate phase unwrapping algorithms are of great interest for solving this problem.

The digital WFS phase unwrapping algorithms described in Section~\ref{sect_digWFS} were all implemented in MATLAB. As mentioned above, for the digital SH-WFS based unwrapping the CuReD method is used in Step~4 of Algorithm~\ref{algo_dSH_WFS}. This algorithm requires neither parameter tuning nor an initial guess \cite{Zhariy_Neubauer_Rosensteiner_Ramlau_2011,Rosensteiner_2011_01,Rosensteiner_2011_02}. Similarly, for Step~2 in Algorithm~\ref{algo_dP_WFS} the PCuReD algorithm is used. Finally, in Step~2 of Algorithm~\ref{algo_dFb_WFS}, the NOPE algorithm is used, which does require an initial guess, for which either zero, or an approximation computed via PCuReD has been used \cite{HuNeuSha_2023}. This is denoted below by ``zero starting'' and ``linear starting'', respectively. Furthermore, the NOPE also has an optional smoothness parameter $s$, which, unless noted otherwise, is set to the canonical choice $s=11/6$ as suggested in \cite{HuNeuSha_2023}; see also \cite{Kolmogorov_1991}.

\subsection{Comparison with existing methods}\label{subsect_impl}

To validate the performance of our proposed algorithms, we compare them to a number of commonly used local and global phase unwrapping approaches, which are based on line integration of an estimate of the phase gradient. While the former evaluate the integral along a single path, the latter average integrals along different paths.

Among the local methods, we use the built-in MATLAB unwrapping function, as well as the algorithm proposed in \cite{Herraez_2002}. The MATLAB built-in unwrapping works in one-dimension. If the input is a matrix, the function operates column-wise. The phase is unwrapped by adding multiples of $2\pi$ to the phase whenever the jump between two consecutive pixels is greater than or equal to $\pi$ radians. The method proposed in \cite{Herraez_2002} is more advanced, and is based on unwrapping most reliable pixels (MRP). Its MATLAB implementation is available in \cite{Firman_2023}. As a first step, the algorithm calculates the reliability of each pixel using second order derivatives. Then, it follows a non-continuous path to perform the unwrapping process, in which pixels with higher reliability are unwrapped first. This algorithm is robust, fast, and is used inside AO simulation tools. However, in situations where the noise level is high the quality of the unwrapping suffers.

In contrast to local methods, global methods utilize the wrapped values together with some additional (global) assumptions, such as the probability distribution of the noise or the regularity of the phase field. Among those methods, we select one algorithm based on the PE \cite{Zhao_2020} and one based on the TIE \cite{Zhao_2019}. For both methods, MATLAB codes are available in \cite{Zhao_2023_1} and \cite{Zhao_2023_2}, respectively. In these approaches, first an auxiliary complex field is generated using the wrapped phase. Then, a two-dimensional second-order elliptic partial differential equation (PDE) is solved, which uses the longitudinal intensity derivative of this field as an input. As the names suggest, in the case of the PE algorithm the Poisson equation is solved, while in the case of the TIE the transport of intensity equation is solved. The resulting phase profile is then automatically in unwrapped form, since it has been obtained as the solution of a PDE rather than as the argument of a complex-valued function. Those approaches have already been successfully applied in, e.g., interferometry and digital holography applications \cite{Pandey_2016}.

In recent years, deep learning methods have become more and more popular for phase unwrapping. Promising results have been shown in the literature; see e.g. \cite{Zhang2019,Wang2022,Huang2022}. However, the training phase of neuronal networks requires a lot of data samples, experience, and time. Moreover, it is not so straight forward to include the neuronal network within the AO simulation tool we are using, and would also require a new network training process adapted to our discussed application. Hence, we do not further consider those approaches in the framework of this paper.

\subsection{Numerical results}\label{subsect_num_res}

In this section, we present the results of our digital WFS phase unwrapping approaches applied to two sets of data, the first with a known ground truth and the second coming from a real-world example without ground truth.

\subsubsection*{Numerical experiment 1: Unwrapping with known ground truth}

\begin{figure}[ht!]
    \centering
    \includegraphics[width=0.3\textwidth, trim = {12cm 0.5cm 11cm 1cm}, clip=true]{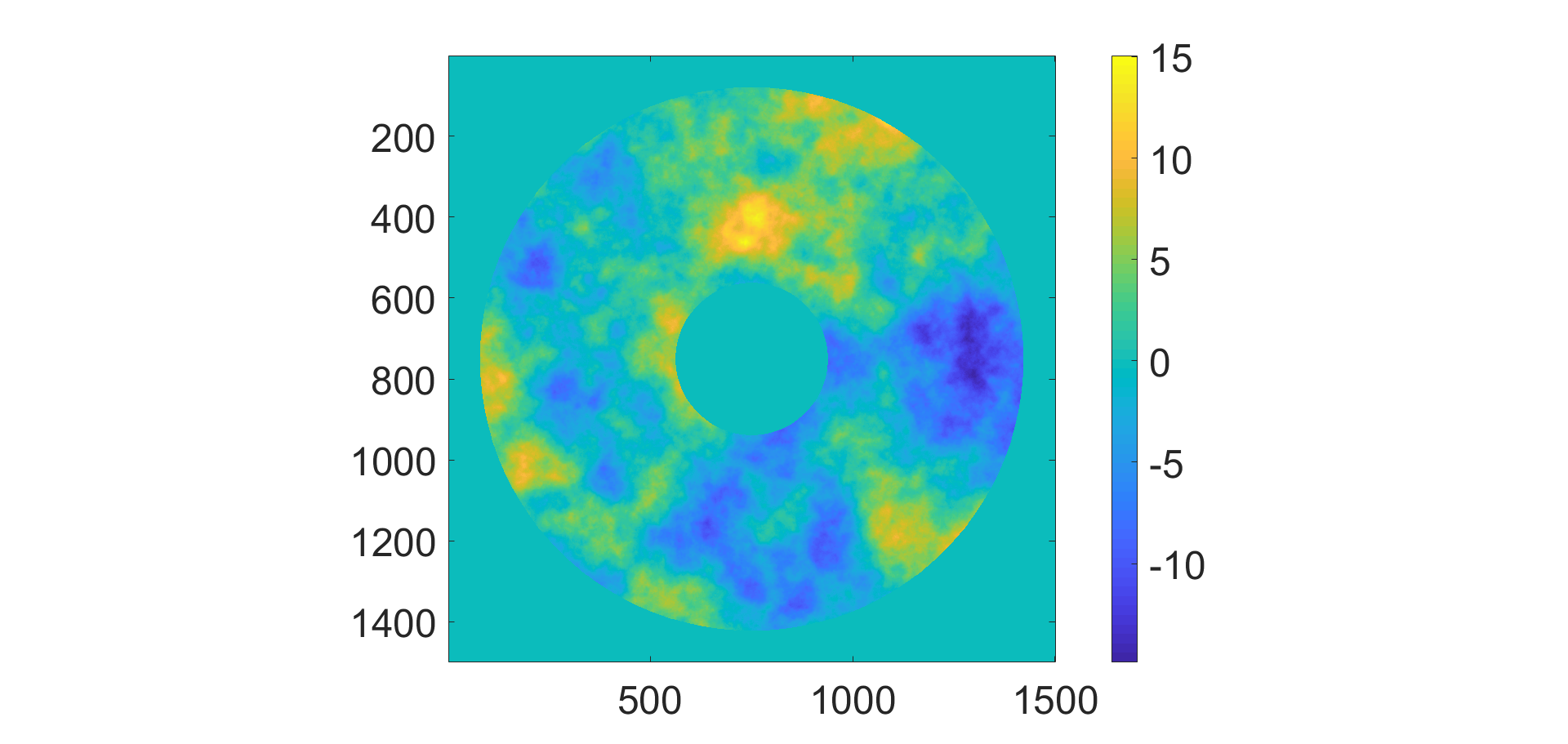}
    \quad
    \includegraphics[width=0.3\textwidth, trim = {12cm 0.5cm 11cm 1cm}, clip=true]{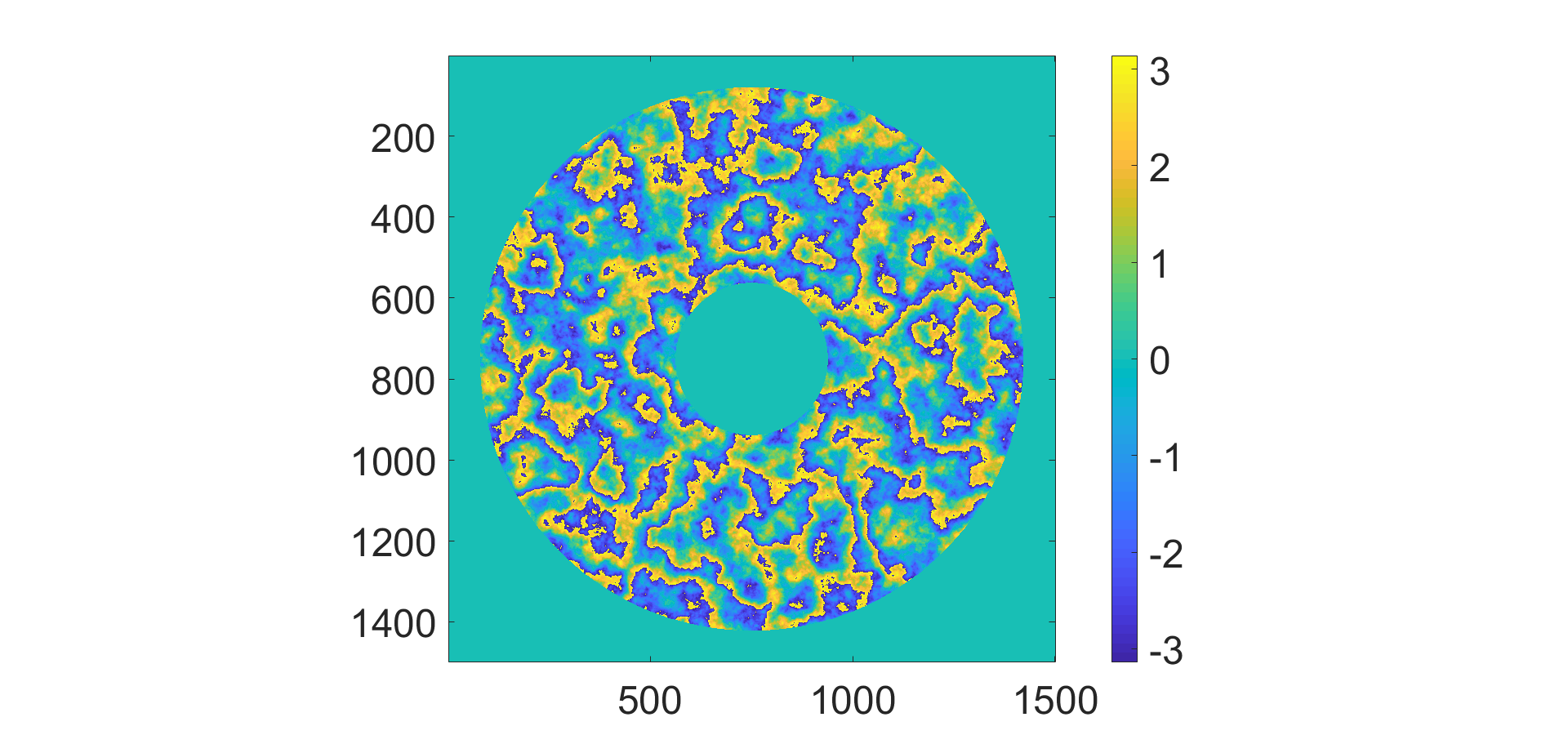}
    \quad
    \includegraphics[width=0.3\textwidth, trim = {12cm 0.5cm 11cm 1cm}, clip=true]{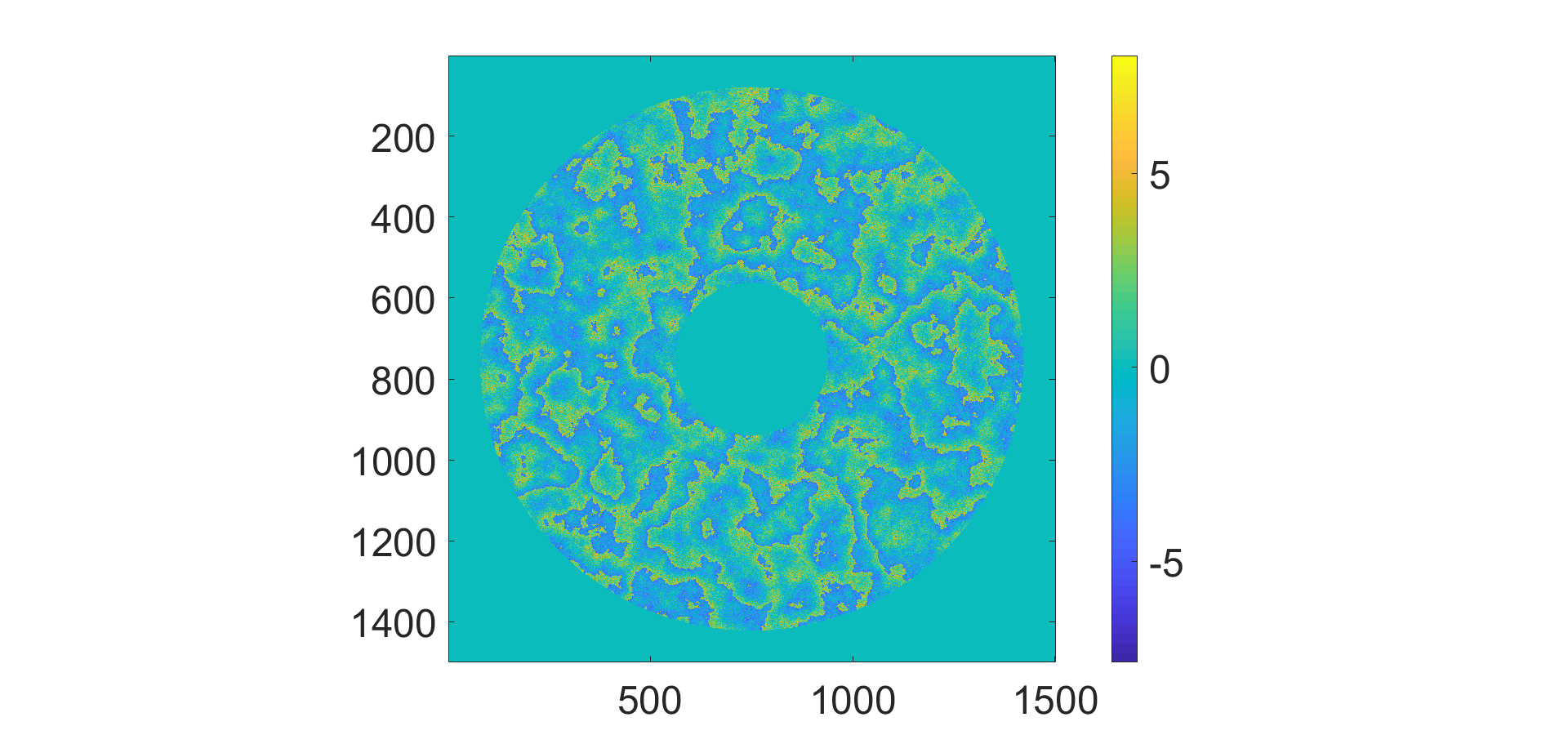}
    \caption{Non-wrapped phase $\phi$ created by the OCTOPUS simulation tool (left), wrapped phase $\phiw$ (middle), and noisy wrapped phase $\phiwd$ including $20\%$ noise (right). }
    \label{fig_OCTOPUS_data}
\end{figure}

For the first set of numerical tests, we consider a random phase $\phi$ created by the OCTOPUS simulation tool (popular in the AO community) of the European Southern Observatory \cite{octopus06}, corresponding to the wavefront aberration caused by some atmospheric turbulence profile. Figure~\ref{fig_OCTOPUS_data} contains both the actual phase $\phi$, the manually wrapped phase $\phiw$ created using the MATLAB built-in function \textit{wrapToPi()}, and its noisy version $\phiwd$ containing $20\%$ relative uniform random noise. The noise was added after the wrapping, which accounts for the larger range of values of $\phiwd$ in comparison with $\phiw$. This noisy wrapped phase $\phiwd$ was then used as input for the unwrapping algorithms.

\begin{figure}[ht!]
\centering
\small
\resizebox{\columnwidth}{!}{%
    \begin{tabular}{ccc}
        \textbf{\footnotesize 30 $\times$ 30} & \textbf{\footnotesize 100 $\times$ 100} & \textbf{\footnotesize 500 $\times$ 500} \\
        \textbf{\footnotesize subapertures} & \textbf{\footnotesize subapertures} & \textbf{\footnotesize subapertures} \\
        \includegraphics[width=0.3\textwidth, trim = {12cm 0.5cm 11cm 1cm}, clip=true]{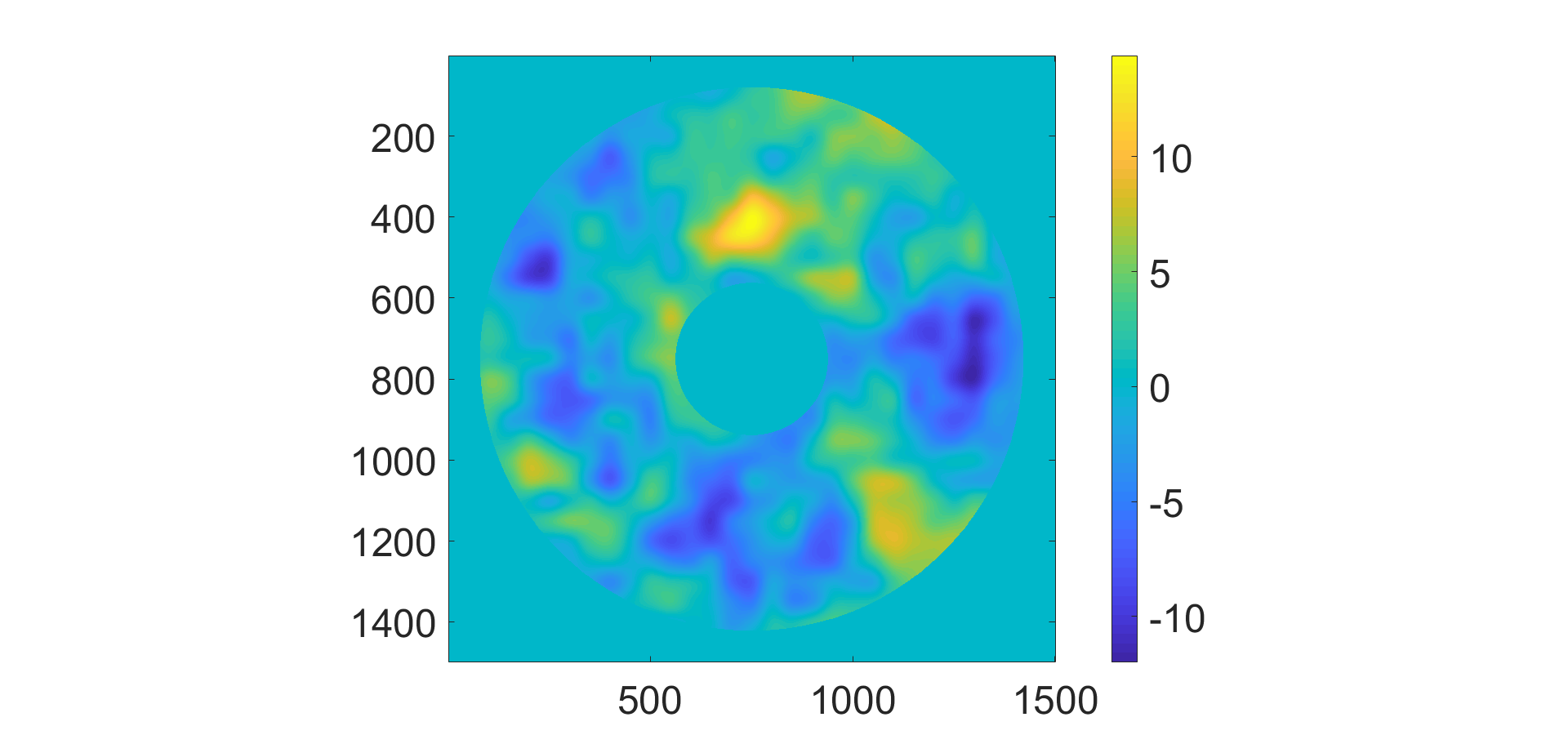} &
        \includegraphics[width=0.3\textwidth, trim = {12cm 0.5cm 11cm 1cm}, clip=true]{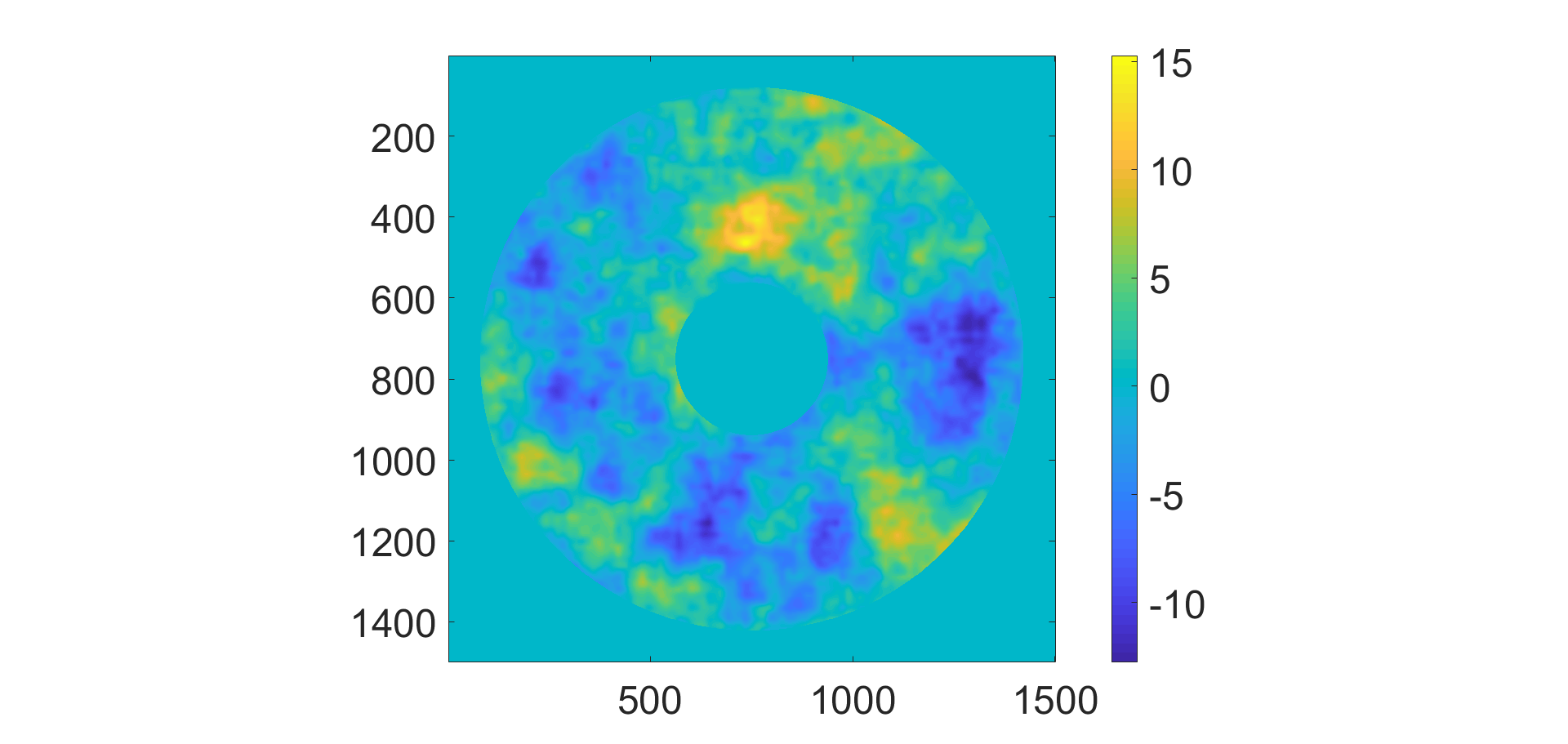} &
        \includegraphics[width=0.3\textwidth, trim = {12cm 0.5cm 11cm 1cm}, clip=true]{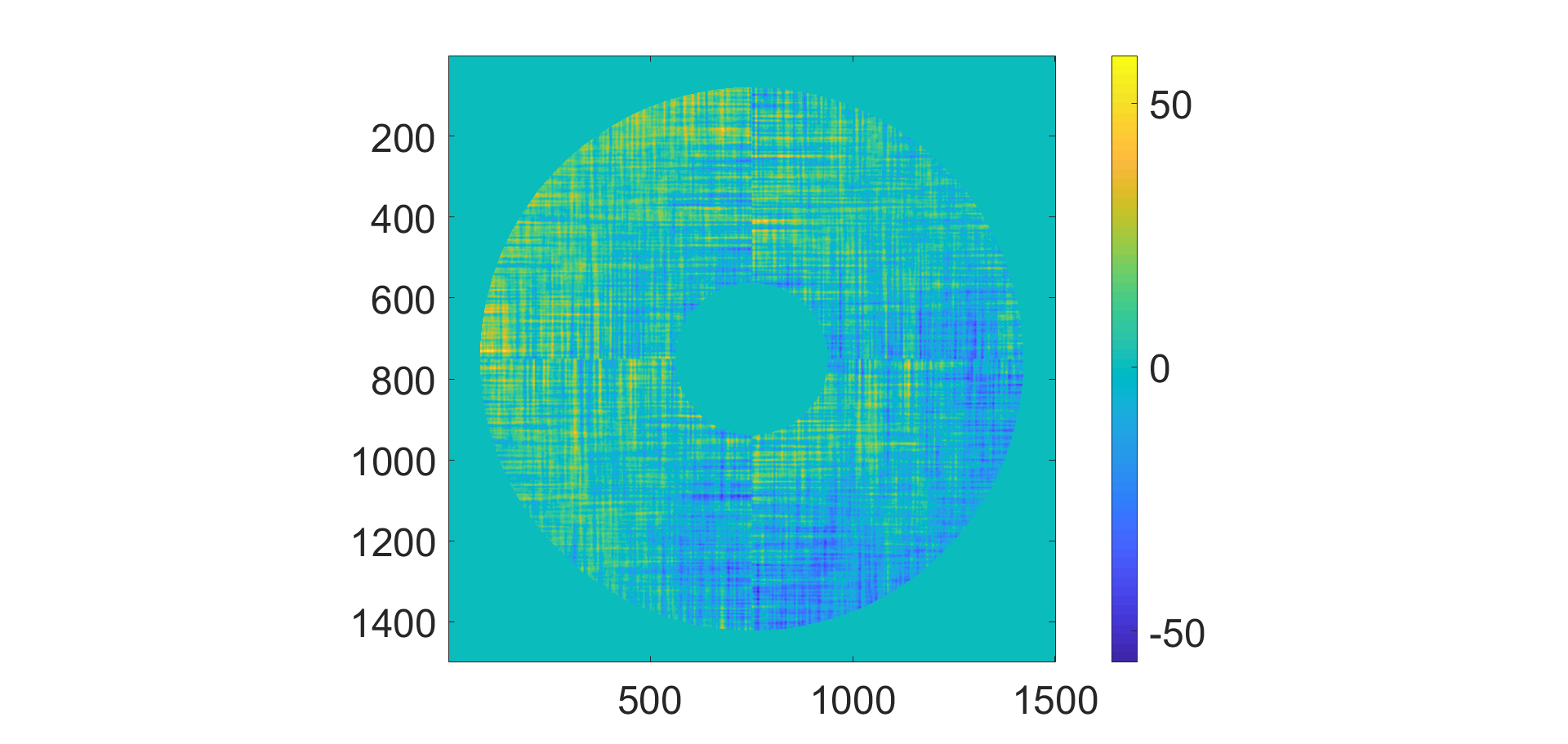} \\
        \includegraphics[width=0.3\textwidth,trim = {12cm 0.5cm 11cm 1cm}, clip=true]{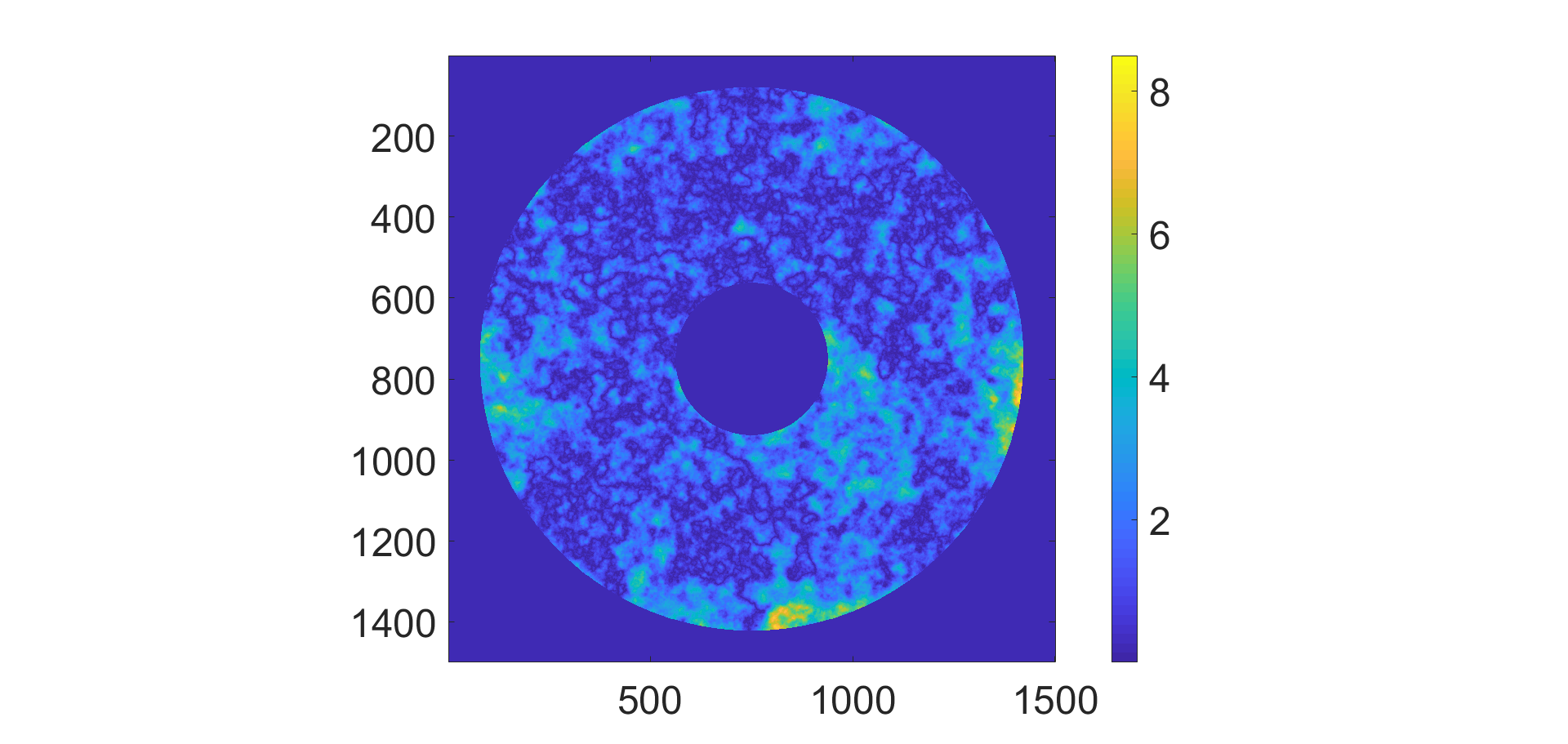} &
        \includegraphics[width=0.3\textwidth,trim = {12cm 0.5cm 11cm 1cm}, clip=true]{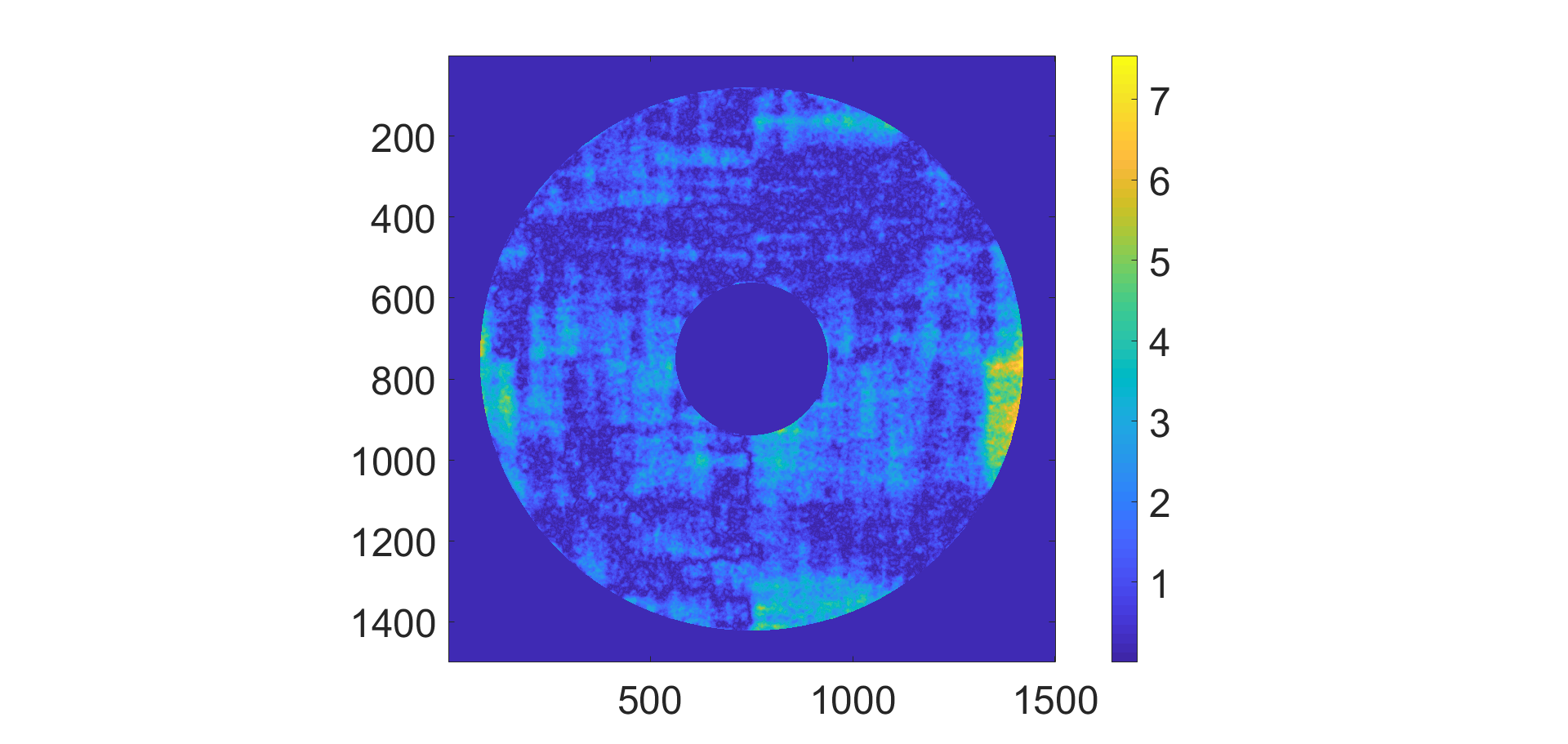} &
        \includegraphics[width=0.3\textwidth,trim = {12cm 0.5cm 11cm 1cm}, clip=true]{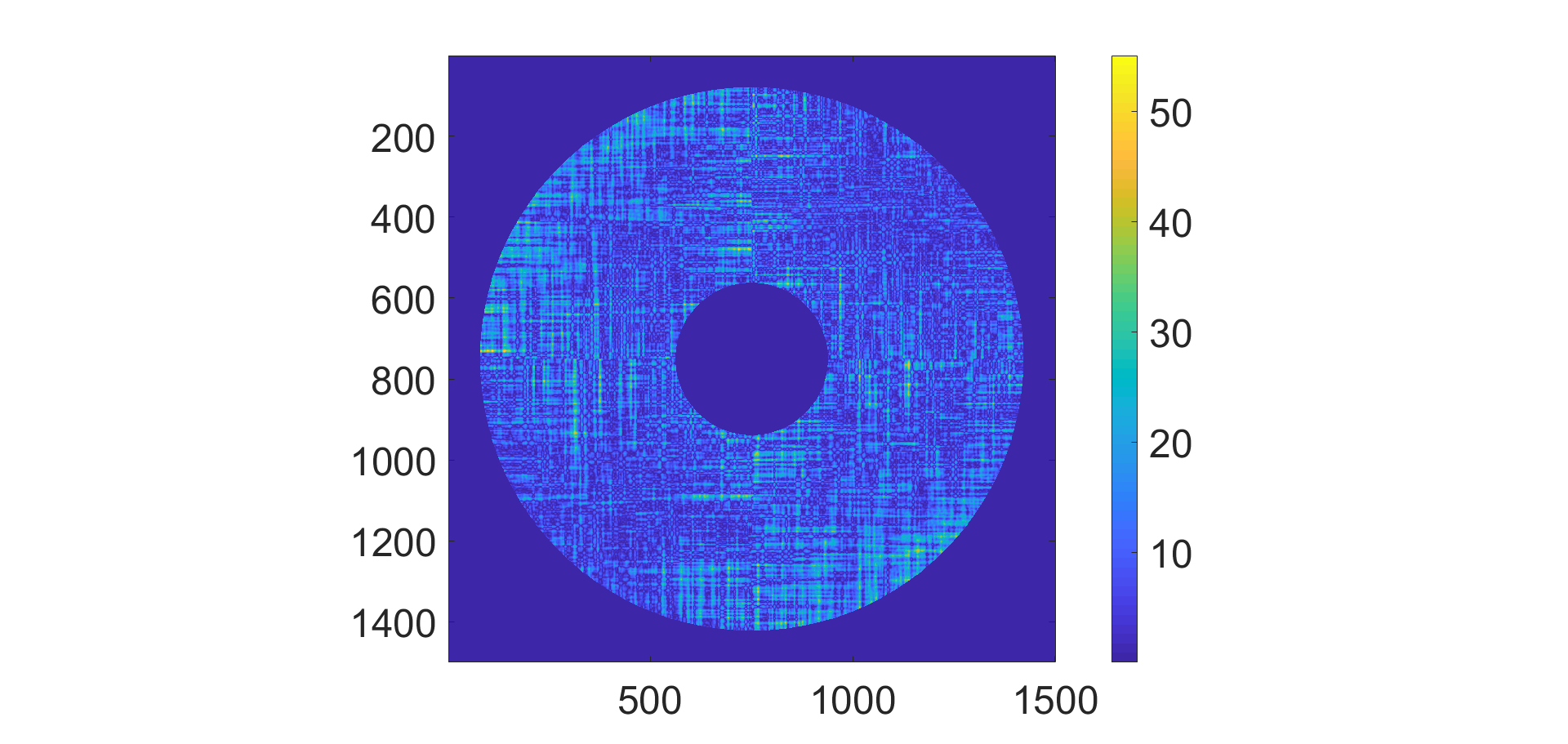} 
    \end{tabular}
    }
    \caption{Unwrapped phases $\phir$ obtained via the digital SH-WFS phase unwrapping approach, i.e., Algorithm~\ref{algo_dSH_WFS} (top) applied to the noisy wrapped phase $\phiwd$ depicted in Figure~\ref{fig_OCTOPUS_data}, and the corresponding absolute difference to the non-wrapped phase $\phi$ (bottom). The number of subapertures corresponds to the number of subdomains $\Ojk$.}
    \label{fig_OCTOPUS_results_SH}
\end{figure}

Figure~\ref{fig_OCTOPUS_results_SH} shows the unwrapped phase $\phir$ obtained via the digital SH-WFS phase unwrapping approach, i.e., Algorithm~\ref{algo_dSH_WFS}, for three different choices of the number of subapertures/subdomains $\Ojk$, as well as the corresponding reconstruction error. Compared to the ground truth phase $\phi$ depicted in Figure~\ref{fig_OCTOPUS_data}, the best results are obtained with $100 \times 100$ subapertures. In the case of $30 \times 30$ subapertures, the structure of the true phase $\phi$ is also still clearly visible, but the resolution is limited due to the comparatively low number of subdomains. However, in the case of $500 \times 500$ subapertures the reconstruction is almost entirely corrupted by noise. This is because the number of subdomains implicitly acts as a regularization parameter in this approach. Thus it has to be chosen appropriately to strike a proper balance between resolution and stability. Additional regularization may also be used, either by adapting Step~4 of Algorithm~\ref{algo_dSH_WFS}, or for example via the averaged digital SH-WFS approach introduced in \cite{Hubmer_Sherina_Ramlau_Pircher_Leitgeb_2023}.

\begin{figure}[ht!]
    \centering
    \small
\resizebox{\columnwidth}{!}{%
    \begin{tabular}{cccc}
        \textbf{\Large PCuReD} & \textbf{\Large NOPE} & \textbf{\Large NOPE} & \textbf{\Large NOPE} \\
        \textbf{\Large Linear} & \textbf{\Large Nonlinear} & \textbf{\Large Nonlinear} & \textbf{\Large Nonlinear} \\
         & \textbf{\Large $s=11/6$} & \textbf{\Large $s=11/6$} & \textbf{\Large $s=1/2$} \\
        \textbf{\Large zero starting} & \textbf{\Large zero starting} & \textbf{\Large linear starting} & \textbf{\Large linear starting} \\
        \includegraphics[width=0.4\textwidth, trim = {12cm 0.5cm 11cm 1cm}, clip=true]{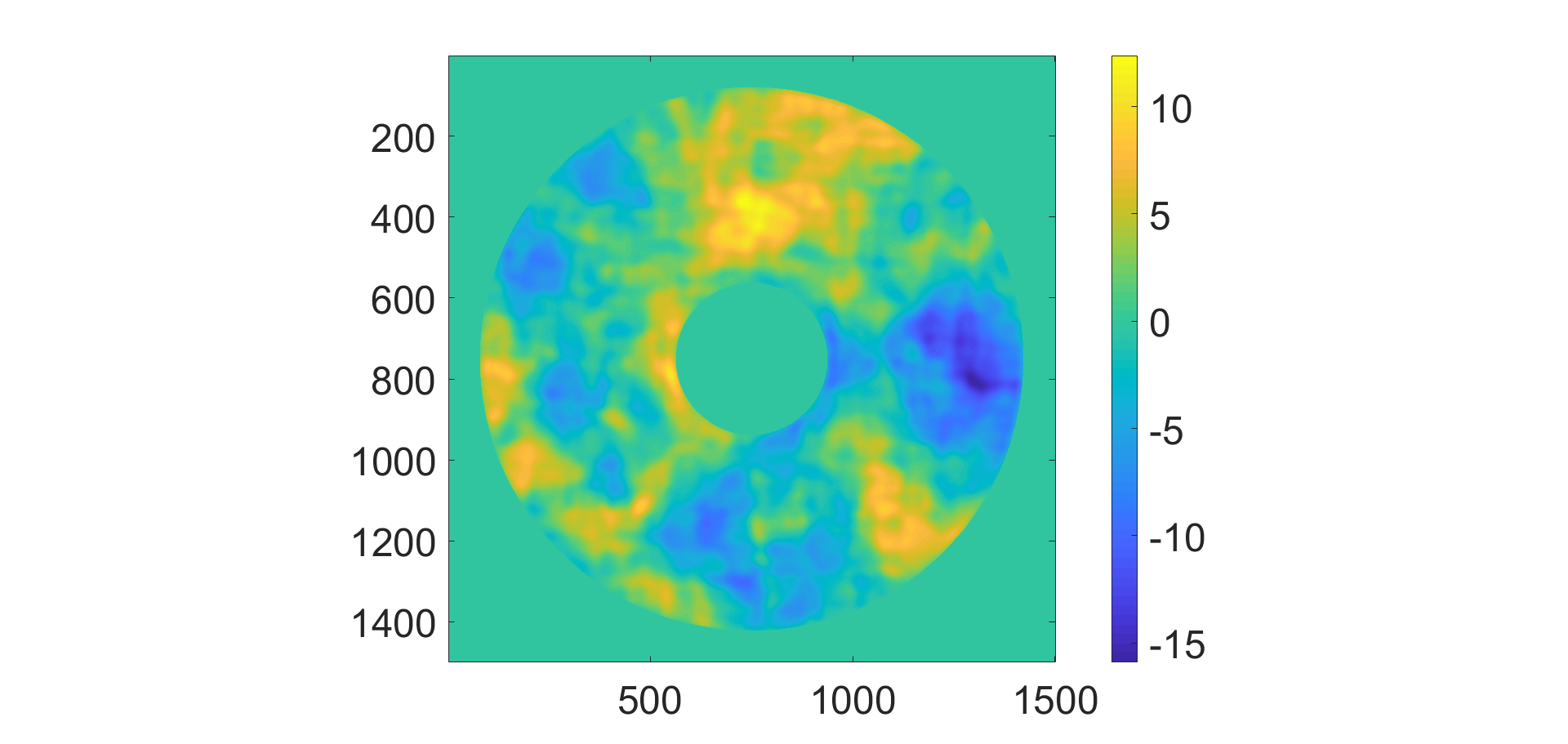} &
        \includegraphics[width=0.4\textwidth, trim = {12cm 0.5cm 11cm 1cm}, clip=true]{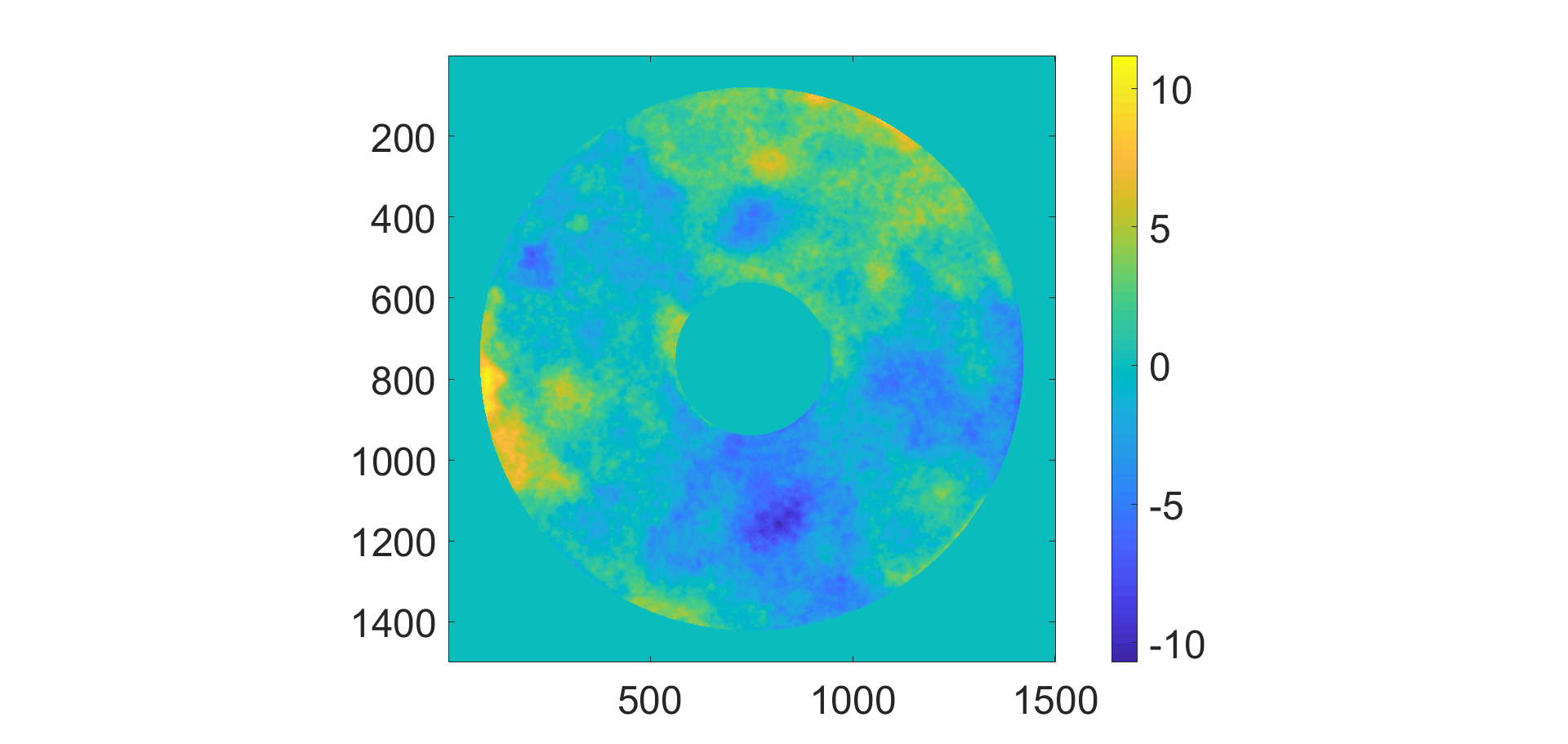} &
        \includegraphics[width=0.4\textwidth, trim = {12cm 0.5cm 11cm 1cm}, clip=true]{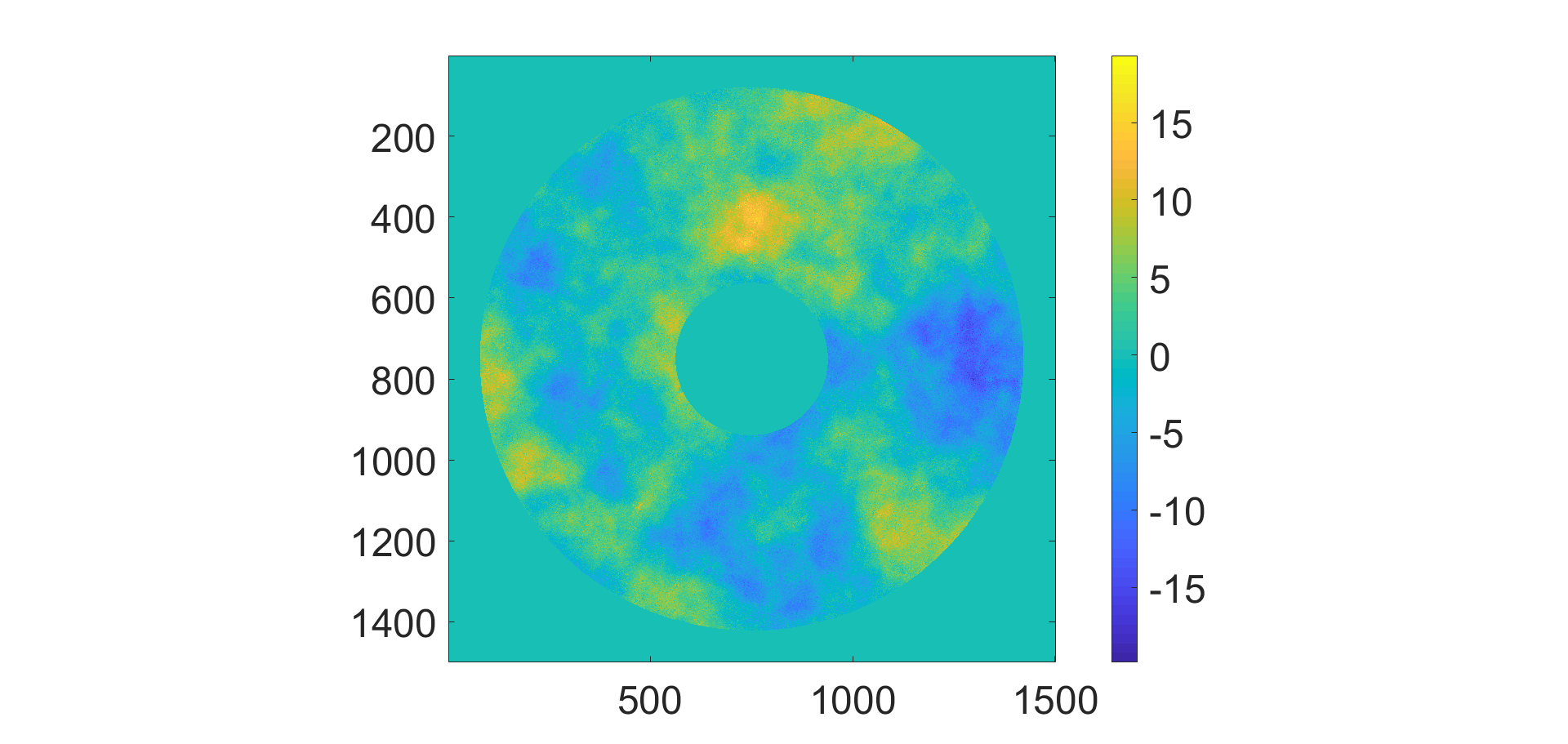} &
        \includegraphics[width=0.4\textwidth, trim = {12cm 0.5cm 11cm 1cm}, clip=true]{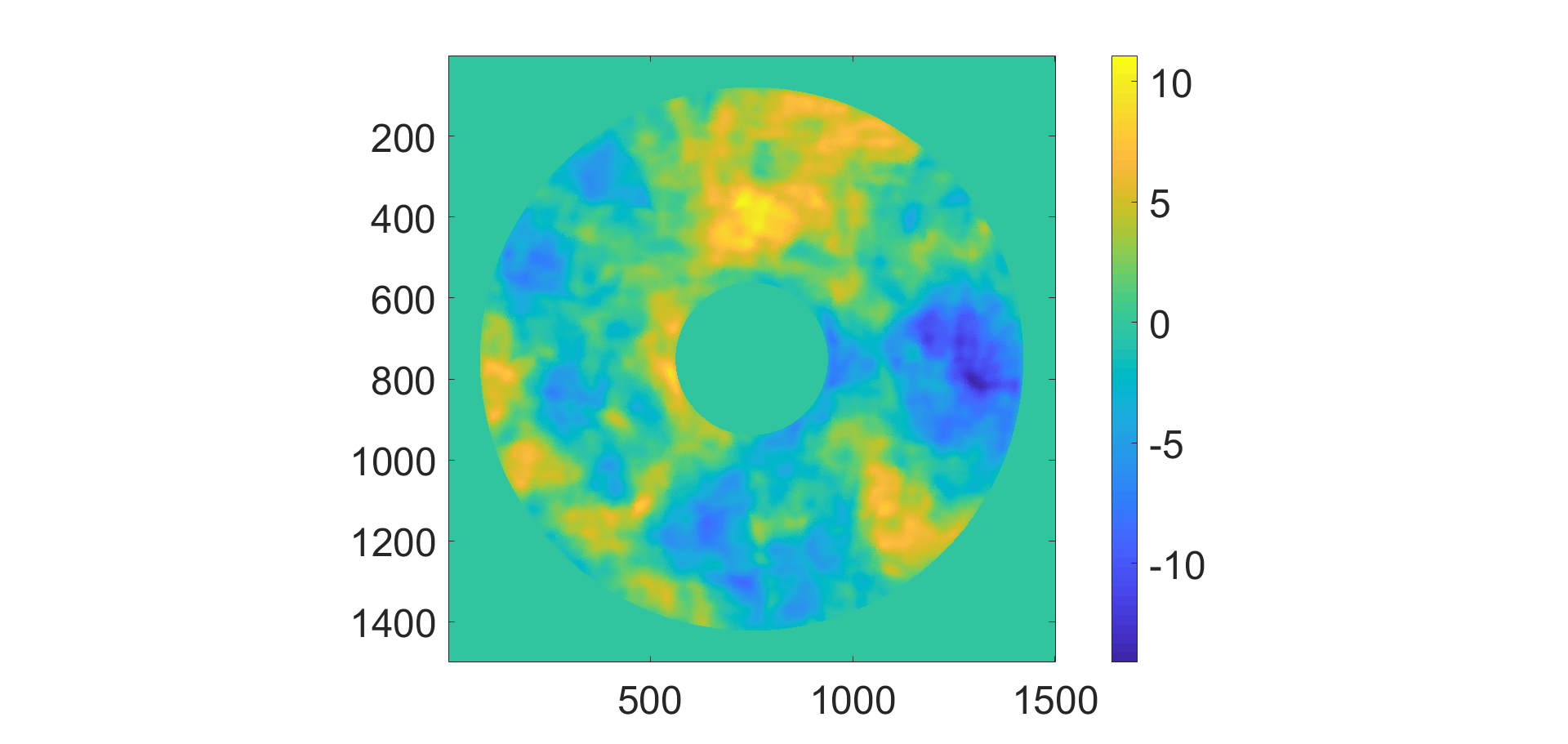} \\
        \includegraphics[width=0.4\textwidth, trim = {12cm 0.5cm 11cm 1cm}, clip=true]{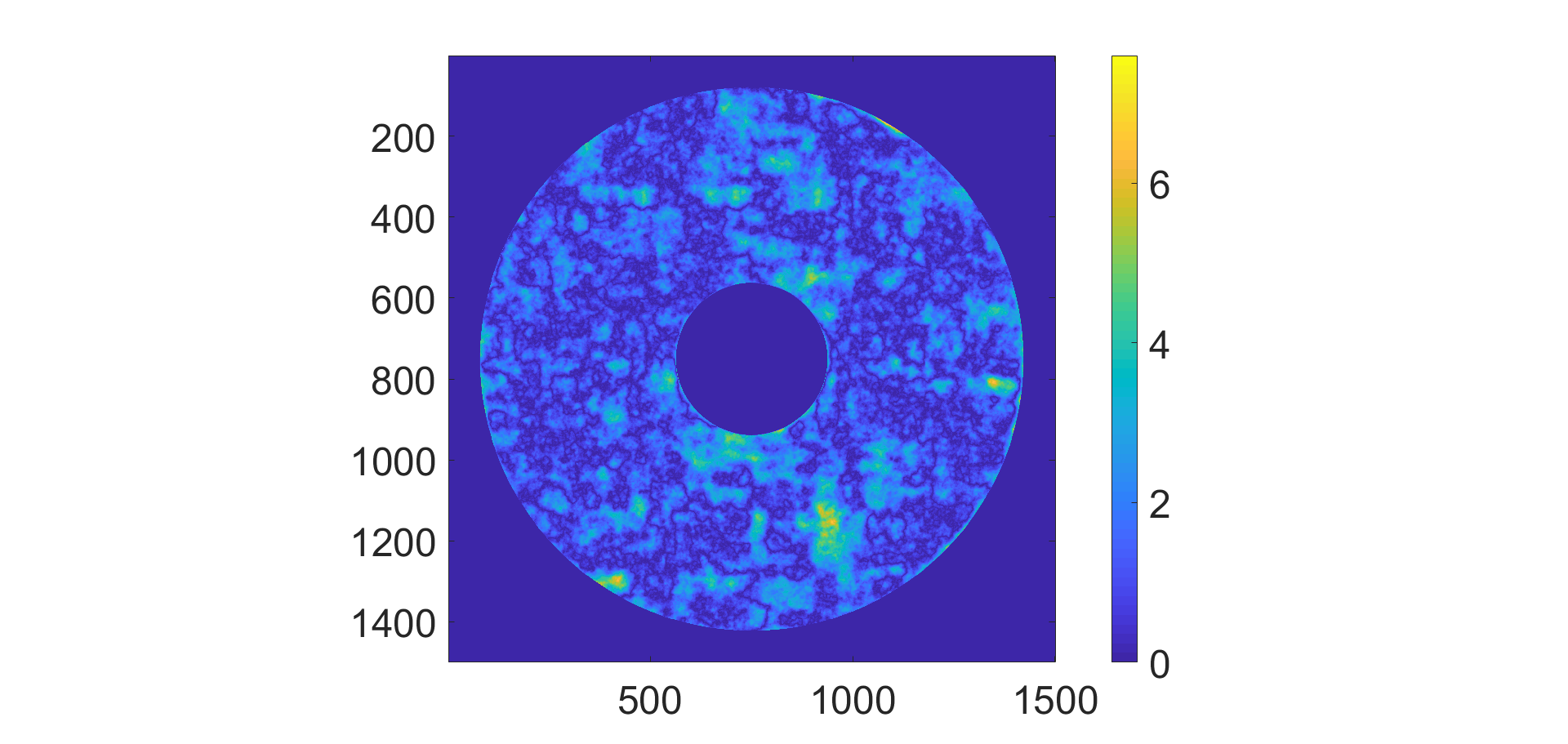} &
        \includegraphics[width=0.4\textwidth, trim = {12cm 0.5cm 11cm 1cm}, clip=true]{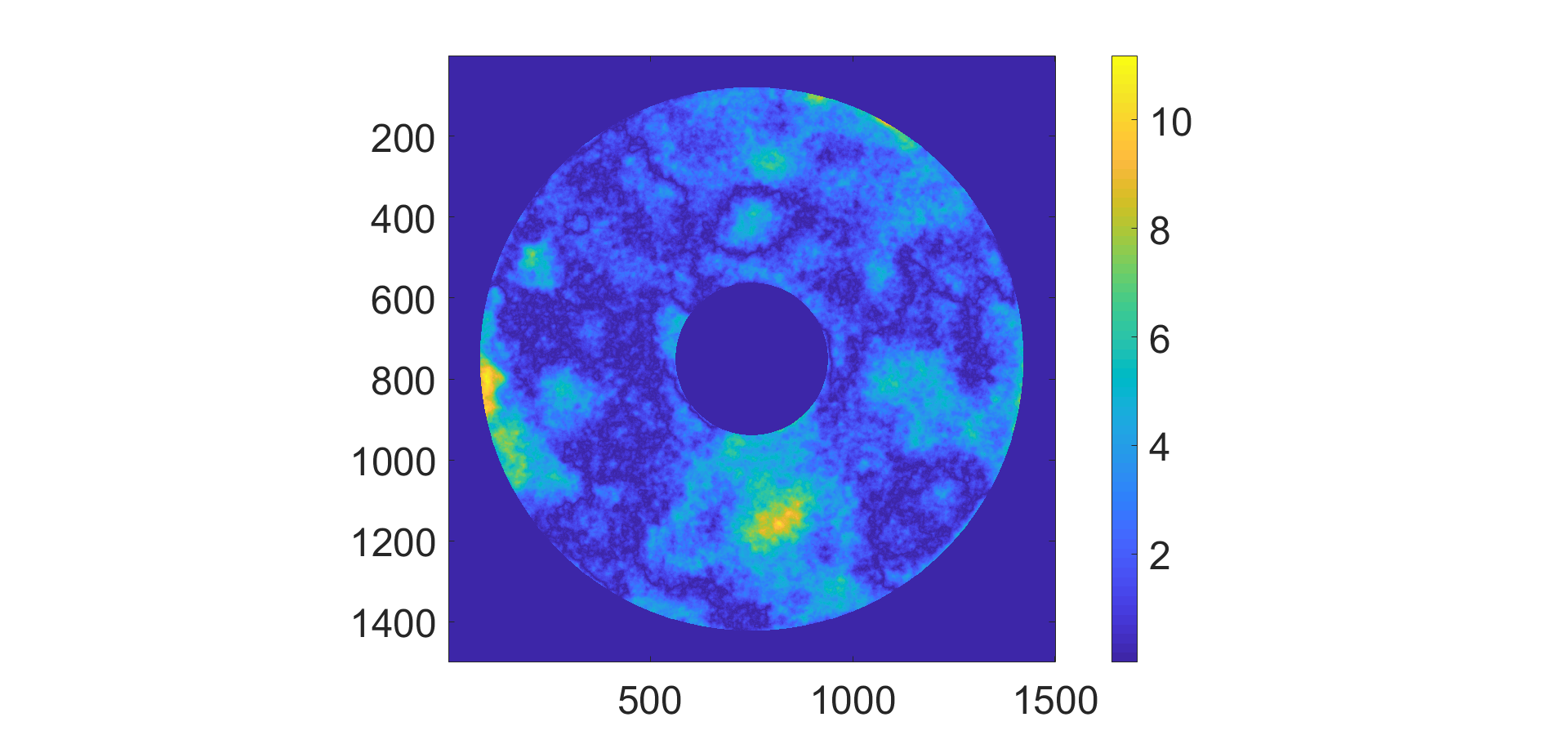} &
        \includegraphics[width=0.4\textwidth, trim = {12cm 0.5cm 11cm 1cm}, clip=true]{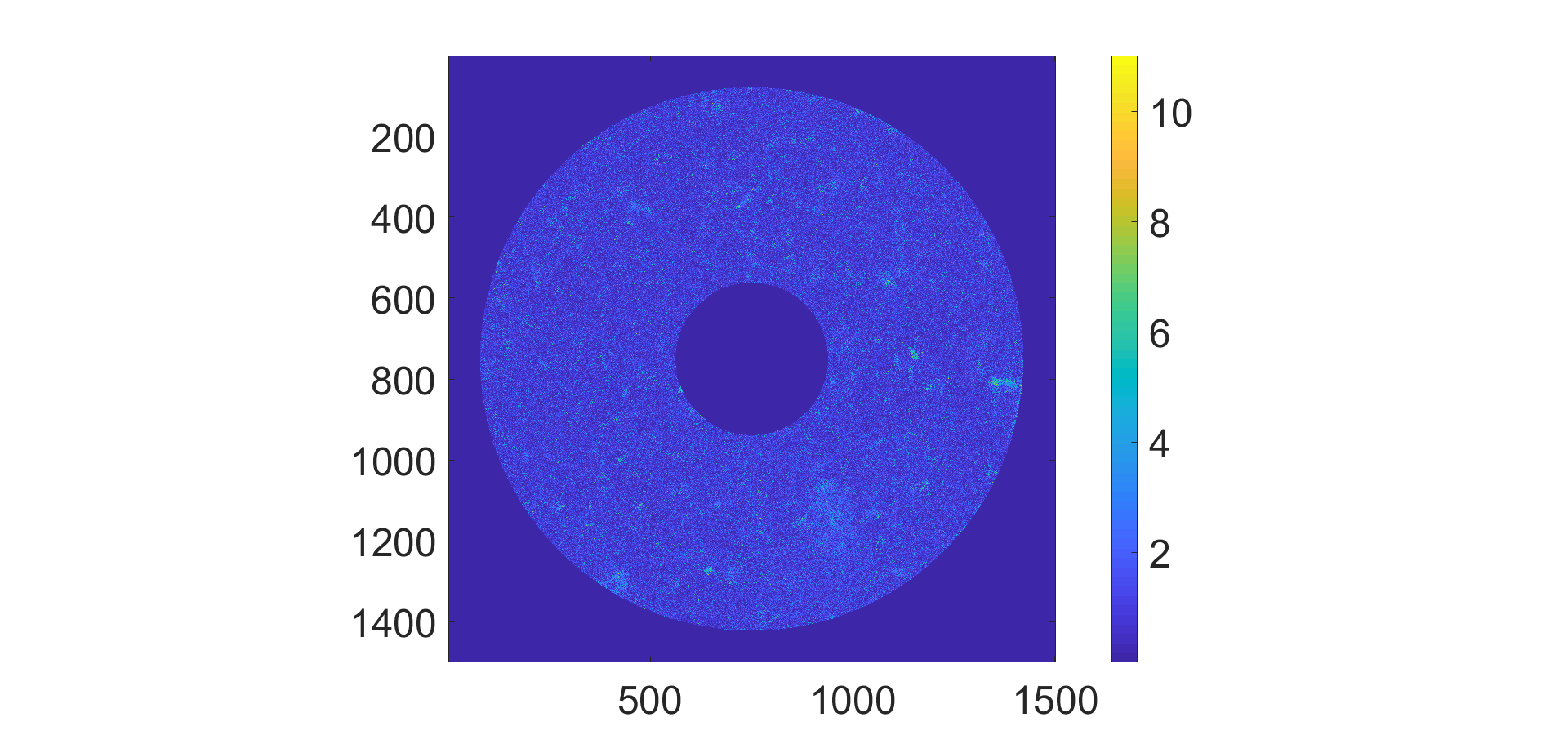} &
        \includegraphics[width=0.4\textwidth, trim = {12cm 0.5cm 11cm 1cm}, clip=true]{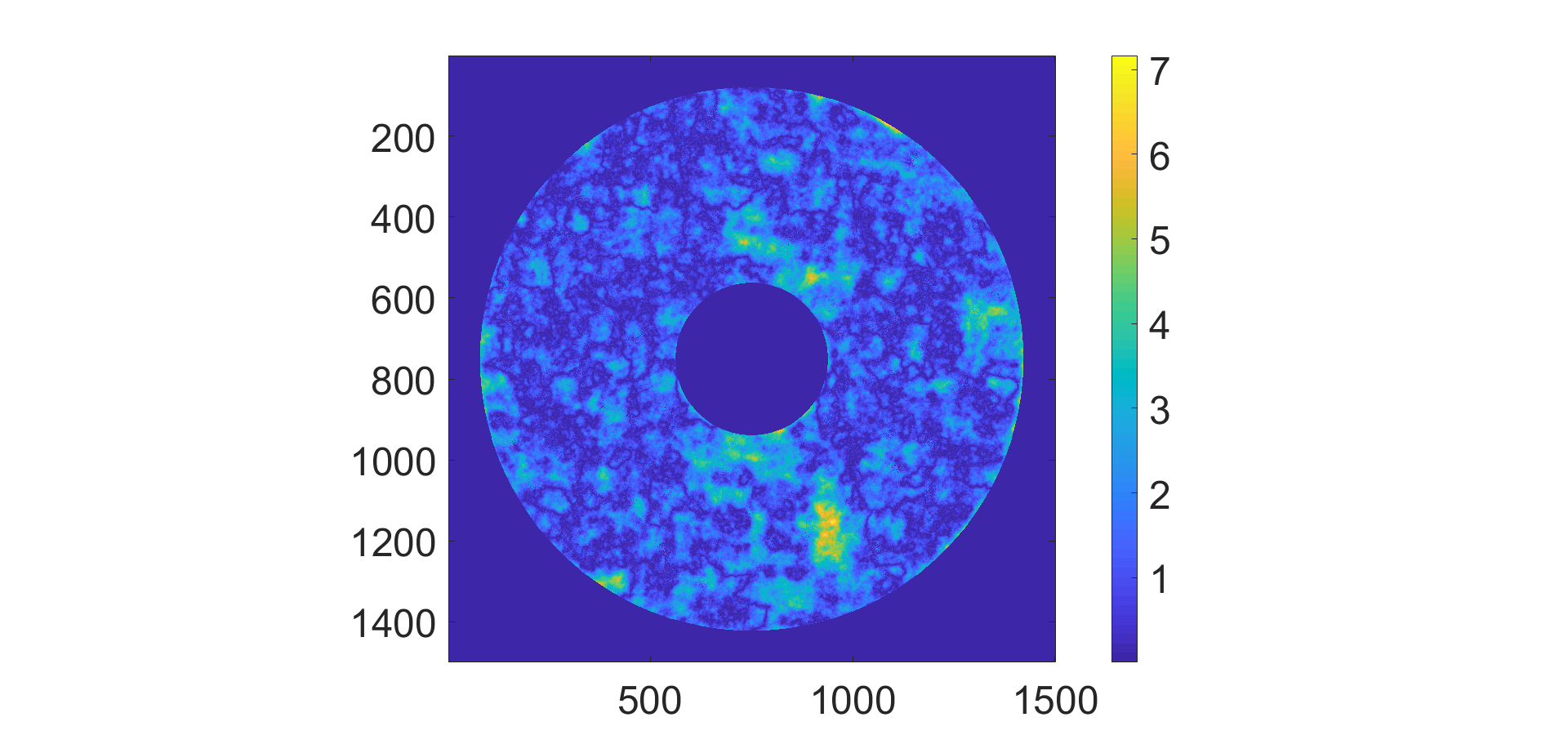} \\
    \end{tabular}
    }
    \caption{Unwrapped phases $\phir$ obtained via the different PWFS unwrapping methods of Section~\ref{subsect_digFWFS}, i.e., Algorithm~\ref{algo_dFb_WFS} \&~\ref{algo_dP_WFS}, applied to the noisy wrapped phase $\phiwd$ depicted in Figure~\ref{fig_OCTOPUS_data} (top), as well as the corresponding absolute difference to the non-wrapped phase $\phi$ (bottom). The results show Algorithm~\ref{algo_dP_WFS} (left) and Algorithm~\ref{algo_dFb_WFS} (three right columns) with zero or linear starting and smoothness index $s=11/6$ or $s=1/2$ respectively.}
    \label{fig_OCTOPUS_results_pyramid}
\end{figure}

Next, we consider the results obtained via the digital Fourier-type WFS unwrapping methods of Section~\ref{subsect_digFWFS}, in particular Algorithm~\ref{algo_dFb_WFS} and Algorithm~\ref{algo_dP_WFS}. We start with the classical 4-sided PWFS, cf.~Table~\ref{table_psi}. Figure~\ref{fig_OCTOPUS_results_pyramid} collects four different reconstructions: The left column was obtained using the linear PCuReD method, i.e. Algorithm~\ref{algo_dP_WFS}, while the other columns were obtained via the NOPE algorithm, i.e., Algorithm~\ref{algo_dFb_WFS}. As explained above, the smoothness index $s$ is an optional parameter of the NOPE, and the terms zero/linear starting correspond to different initial guesses for the iterative procedure underlying the NOPE \cite{HuNeuSha_2023}. In particular, the reconstruction shown in the left column was used as an initial guess for the NOPE reconstructions depicted in the right two columns. As can be seen from the residual plots, the best reconstruction is obtained when the NOPE is combined with linear starting and $s=11/6$, as depicted in the third column. Good results are also obtained with the linear Algorithm~\ref{algo_dP_WFS} and Algorithm~\ref{algo_dFb_WFS} with $s=1/2$, depicted in the first and fourth columns, respectively. Compared with the results shown in the second column, the benefit of linear starting over zero starting becomes apparent. Unfortunately, while the NOPE method (Algorithm~\ref{algo_dFb_WFS}) produces some of the best results overall, it is also computationally expensive, requiring several minutes of runtime compared to a couple of seconds for all other algorithms considered in this paper. As a remedy, the NOPE can be run in an abridged form using only a couple of iterations and linear starting. The obtained results are then very close to those shown in the third column, but with a significantly improved computation time.

\begin{figure}[ht!]
    \centering
    \small
\resizebox{\columnwidth}{!}{%
    \begin{tabular}{cccc}
        \textbf{\Large 3-sided} & \textbf{\Large Roof} & \textbf{\Large Cone} & \textbf{\Large iQuad} \\
        \includegraphics[width=0.4\textwidth, trim = {12cm 0.5cm 11cm 1cm}, clip=true]{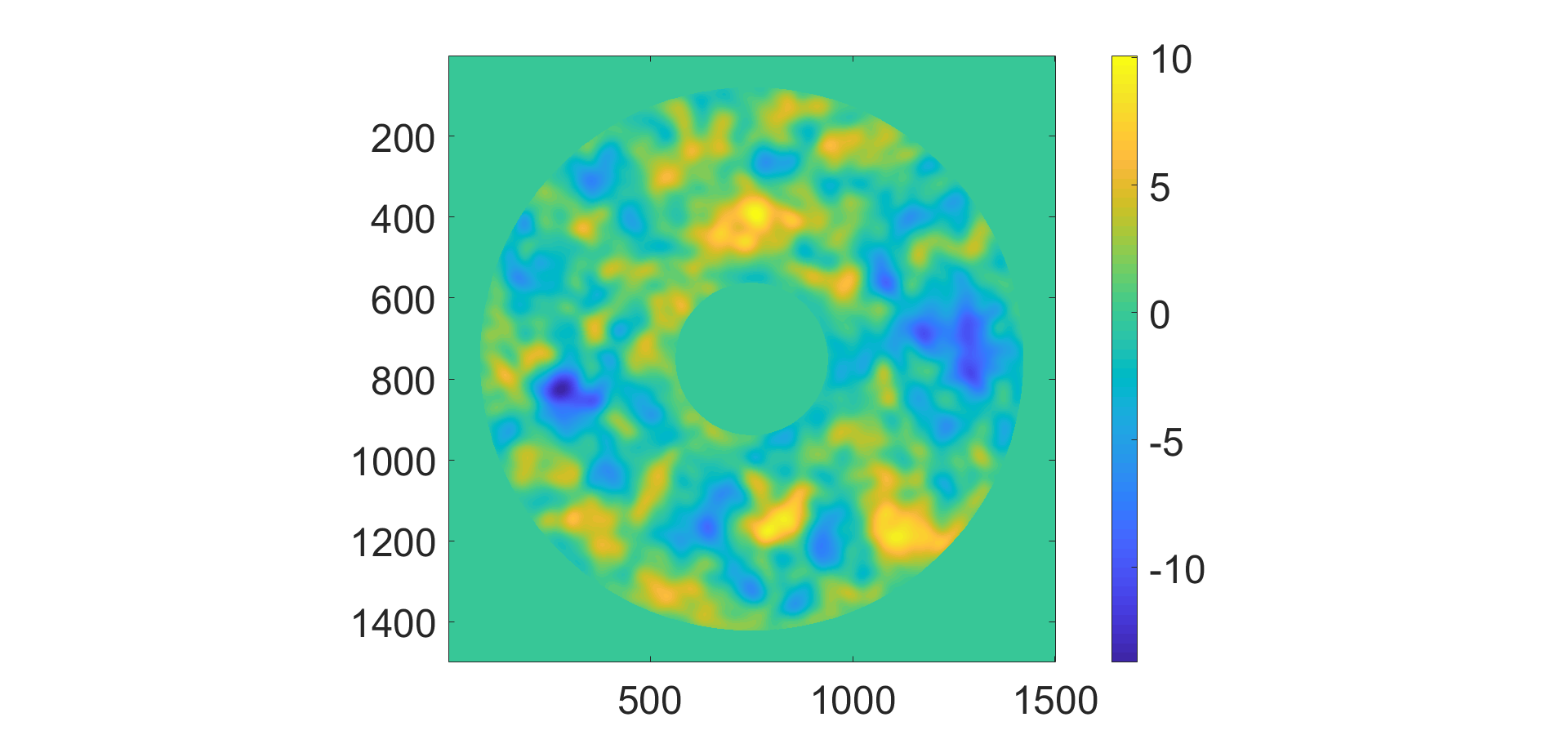} &
        \includegraphics[width=0.4\textwidth, trim = {12cm 0.5cm 11cm 1cm}, clip=true]{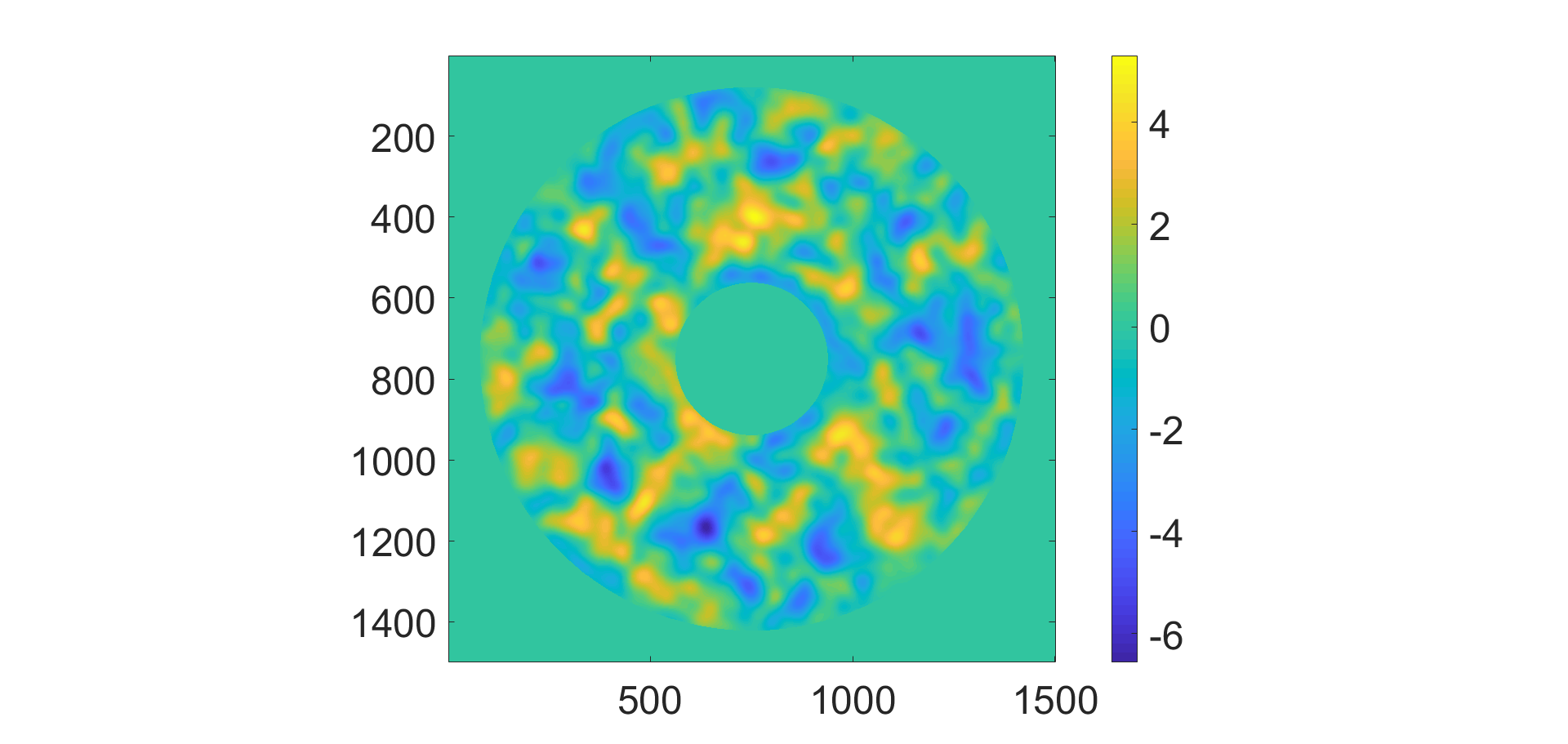} &
        \includegraphics[width=0.4\textwidth, trim = {12cm 0.5cm 11cm 1cm}, clip=true]{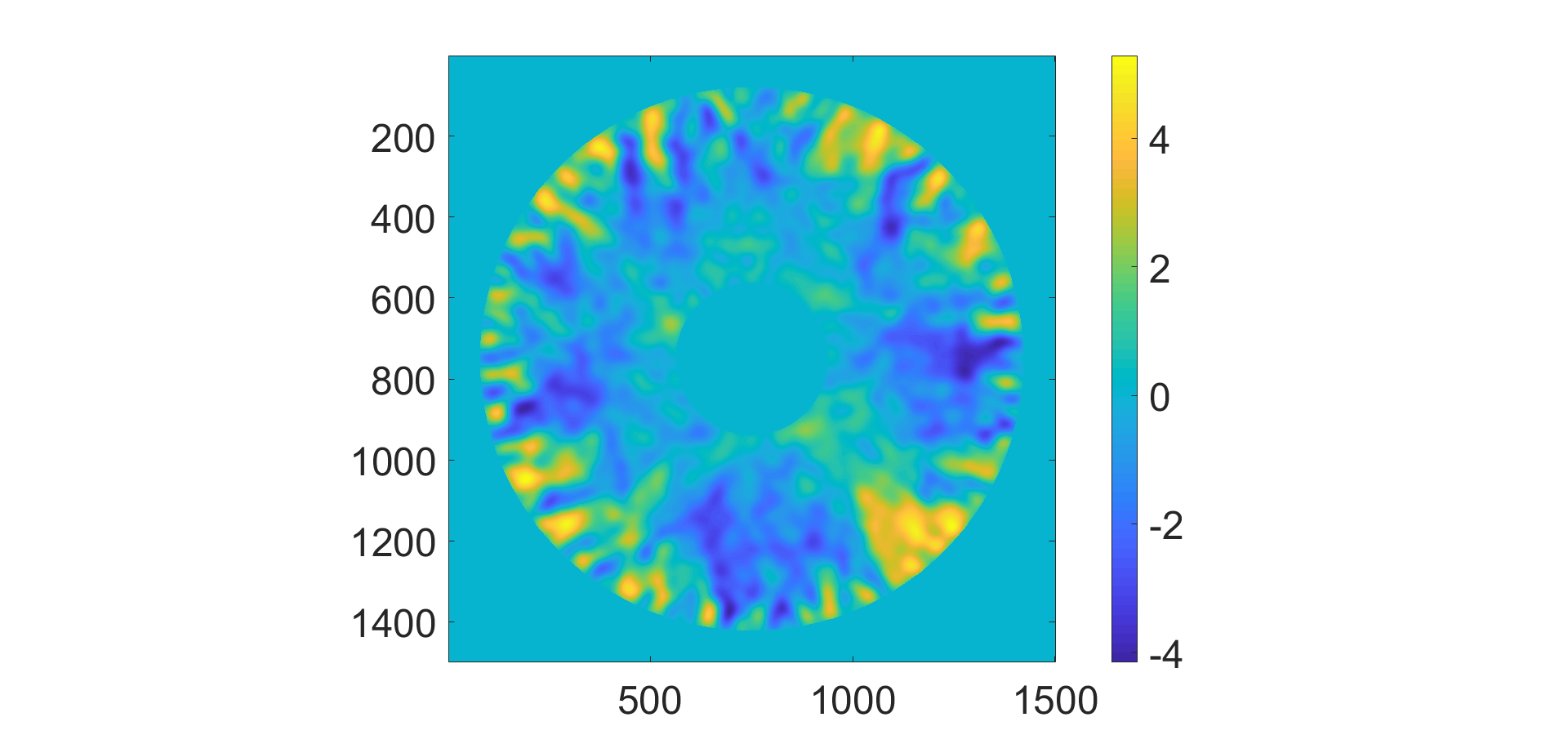} &
        \includegraphics[width=0.4\textwidth, trim = {12cm 0.5cm 11cm 1cm}, clip=true]{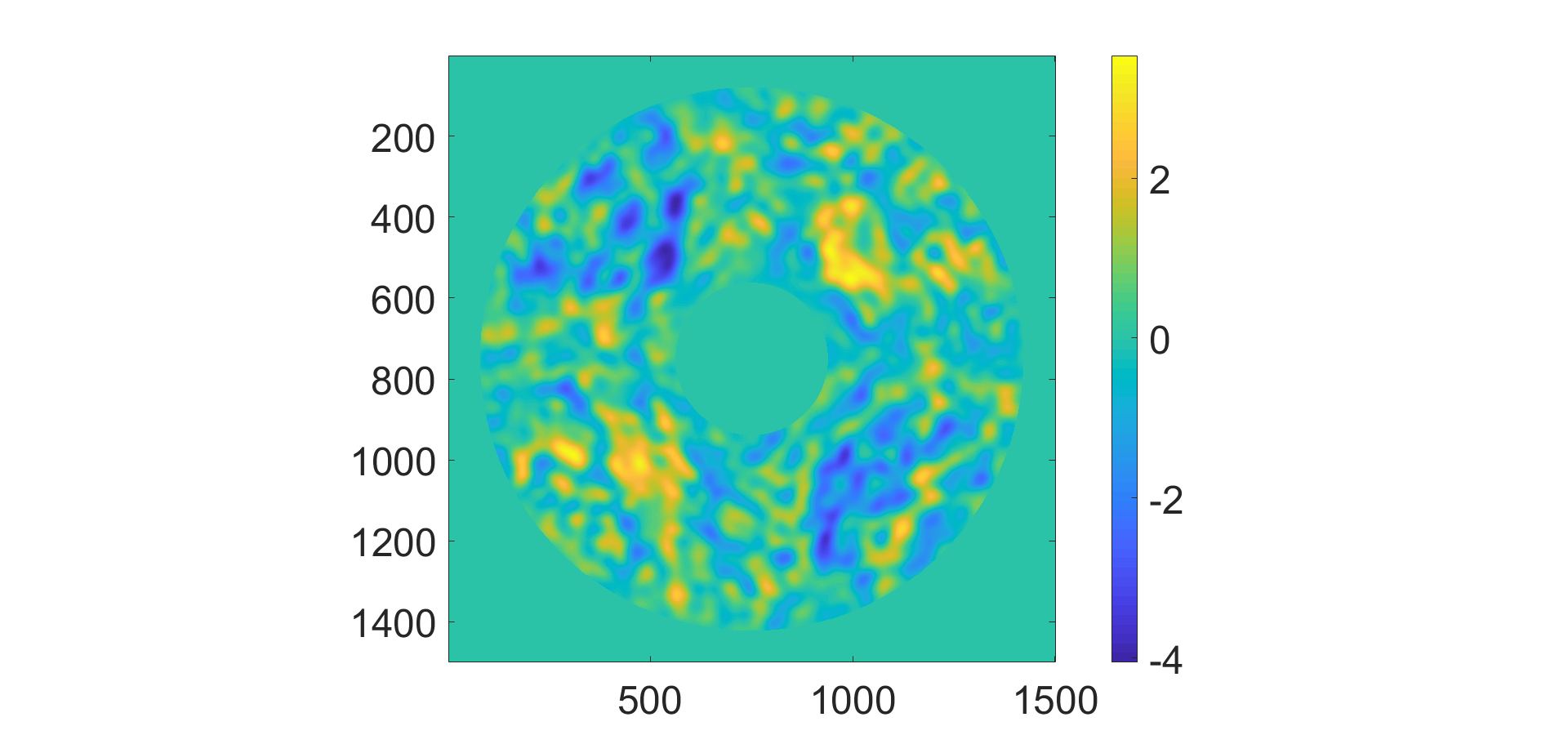} \\
        \includegraphics[width=0.4\textwidth, trim = {12cm 0.5cm 11cm 1cm}, clip=true]{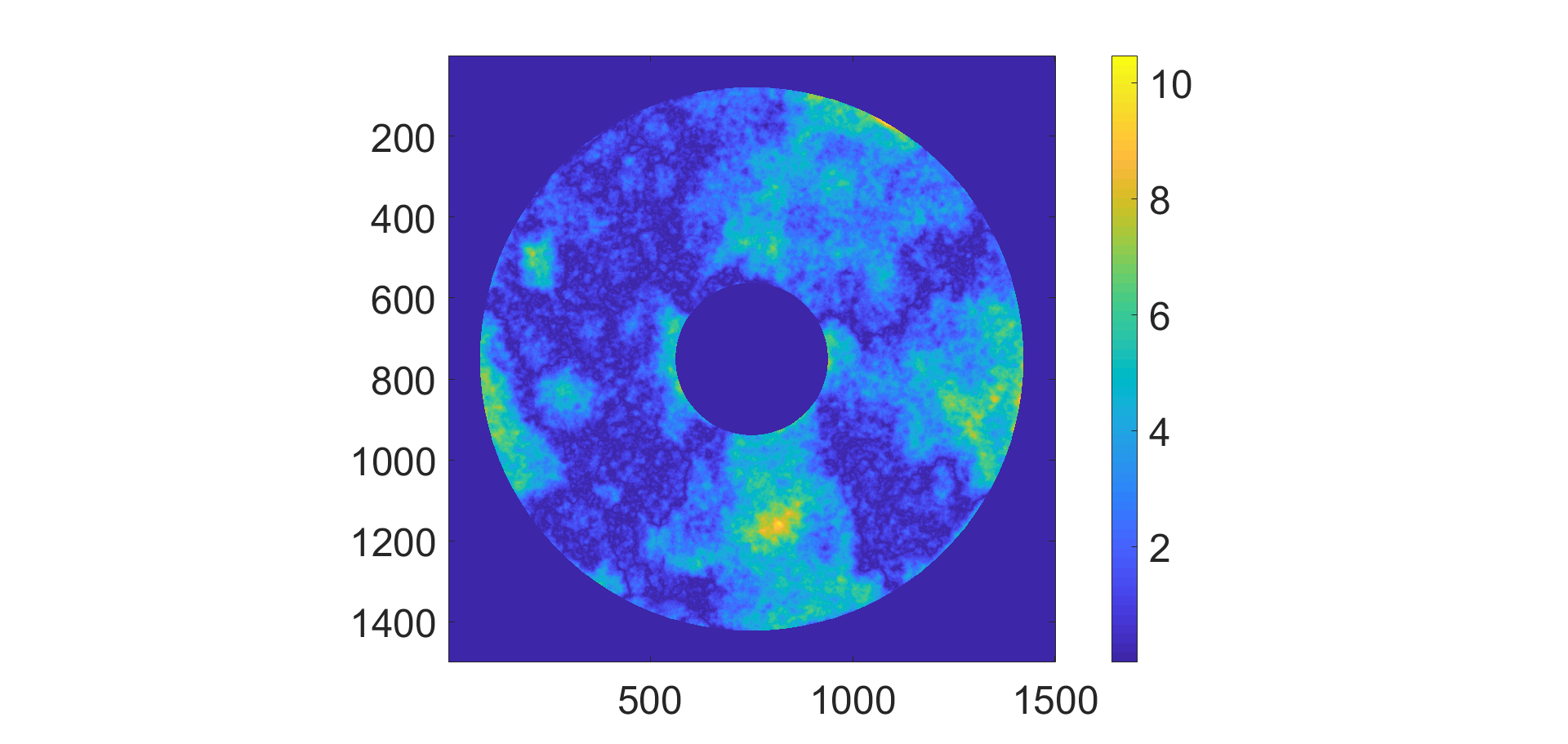} &
        \includegraphics[width=0.4\textwidth, trim = {12cm 0.5cm 11cm 1cm}, clip=true]{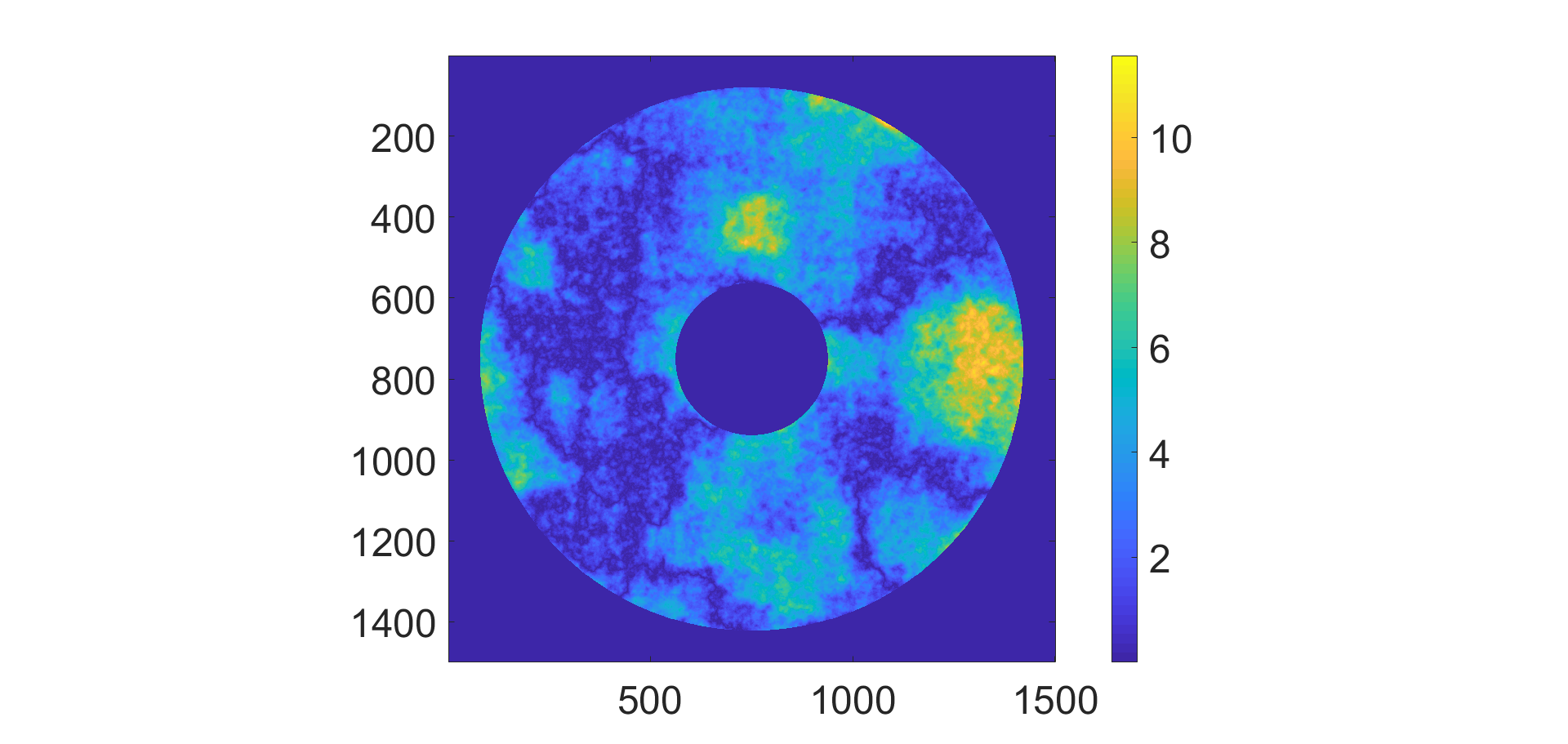} &
        \includegraphics[width=0.4\textwidth, trim = {12cm 0.5cm 11cm 1cm}, clip=true]{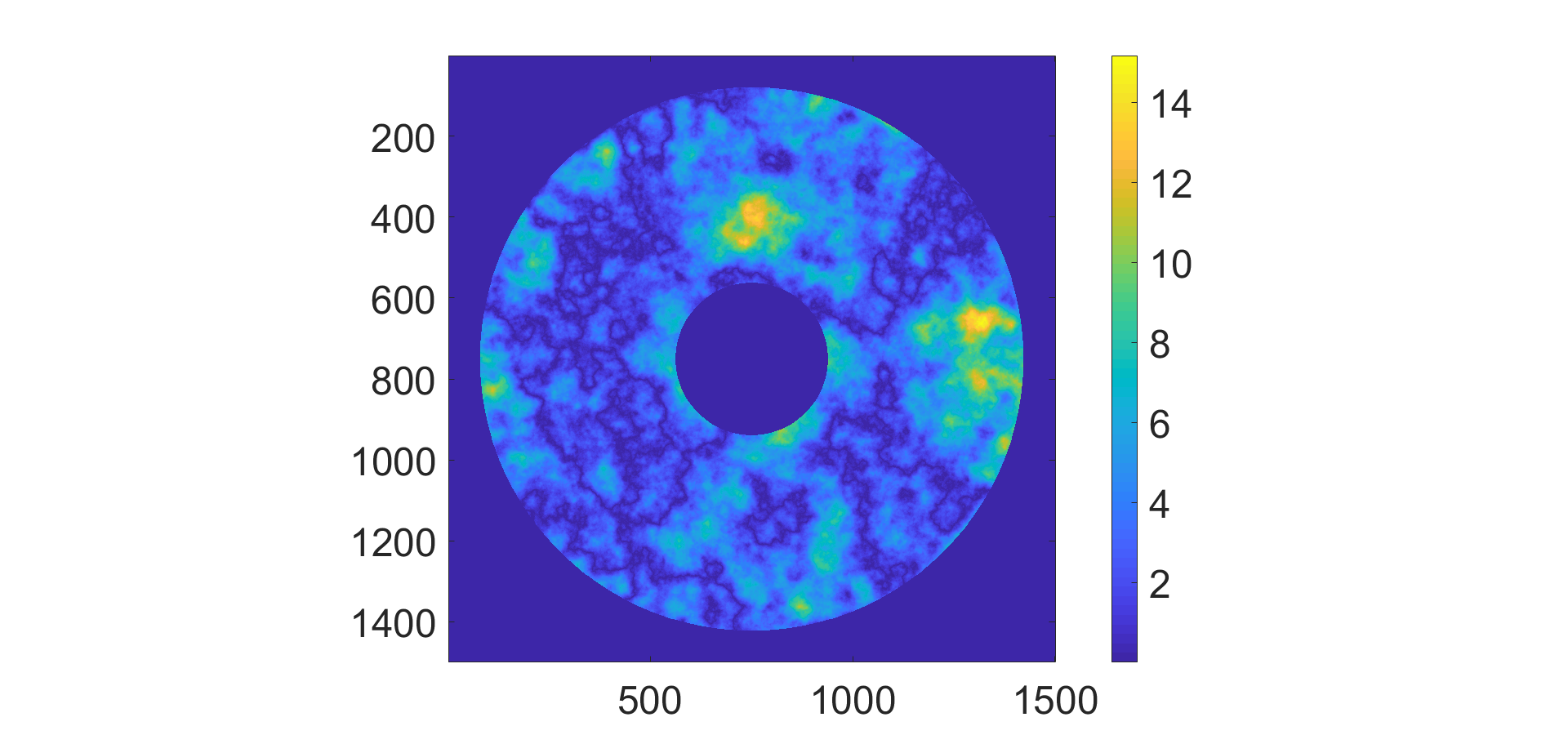} &
        \includegraphics[width=0.4\textwidth, trim = {12cm 0.5cm 11cm 1cm}, clip=true]{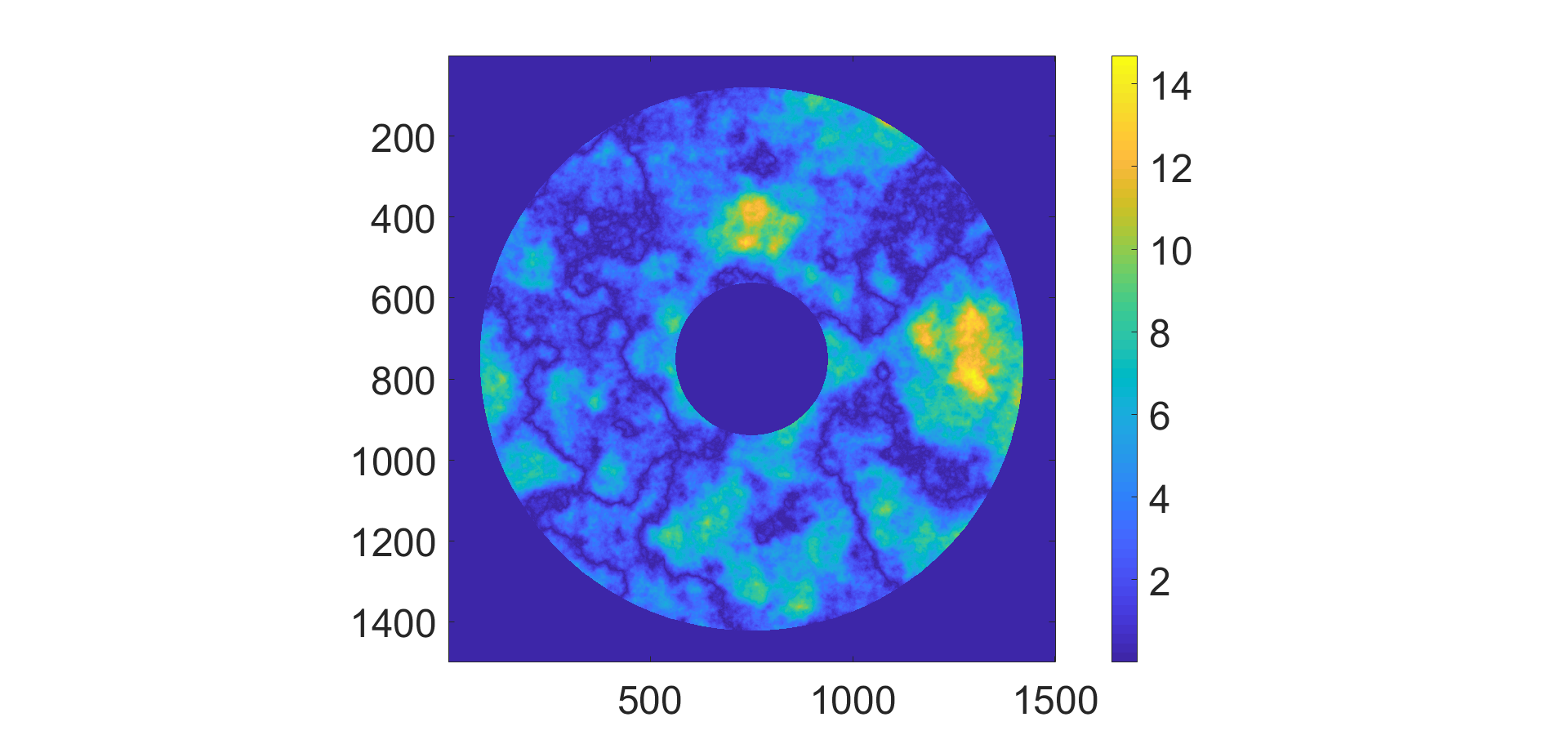} \\
    \end{tabular}
	}
    \caption{Unwrapped phases $\phir$ obtained via the different Fourier-type WFS unwrapping methods of Section~\ref{subsect_digFWFS}, i.e., Algorithm~\ref{algo_dFb_WFS} with the different functions $\psi$ listed in Table~\ref{table_psi}, applied to the noisy wrapped phase $\phiwd$ depicted in Figure~\ref{fig_OCTOPUS_data} (top), as well as the corresponding absolute difference to the non-wrapped phase $\phi$ (bottom).}
    \label{fig_OCTOPUS_results_pyramid_others}
\end{figure}

Next, we consider the effect of different shape functions $\psi$ on the performance of Algorithm~\ref{algo_dFb_WFS}. In particular, we consider $\psi$ corresponding to the 3-sided pyramid, roof, cone, and iQuad wavefront sensors as given in Table~\ref{table_psi}. The obtained unwrapped phases $\phir$ and the corresponding reconstruction errors are depicted in Figure~\ref{fig_OCTOPUS_results_pyramid_others}. Note that here ``Roof'' refers to the averaged sum of the corresponding x-Roof and y-Roof reconstructions. Among the different sensors, only the 3-sided PWFS, and to some extent also the roof WFS, still provide acceptable reconstructions. For the cone sensor, only traces of the structure of the true phase are visible, hidden behind severe artefacts, which for the iQuad sensor dominate the reconstruction entirely. These artefacts may be explained with non-uniqueness issues that have already been observed in \cite{FaHuShaRaLAM19_AO4ELTproc} for the iQuad sensor, and might be removed with further developments of the NOPE algorithm.

\begin{figure}[ht!]
\centering
\small
\resizebox{\columnwidth}{!}{%
    \begin{tabular}{cccc}
        \textbf{\Large MATLAB} & \textbf{\Large MRP} & \textbf{\Large PE} &  \textbf{\Large TIE}\\
        \includegraphics[width=0.4\textwidth,trim = {12cm 0.5cm 11cm 1cm}, clip=true]{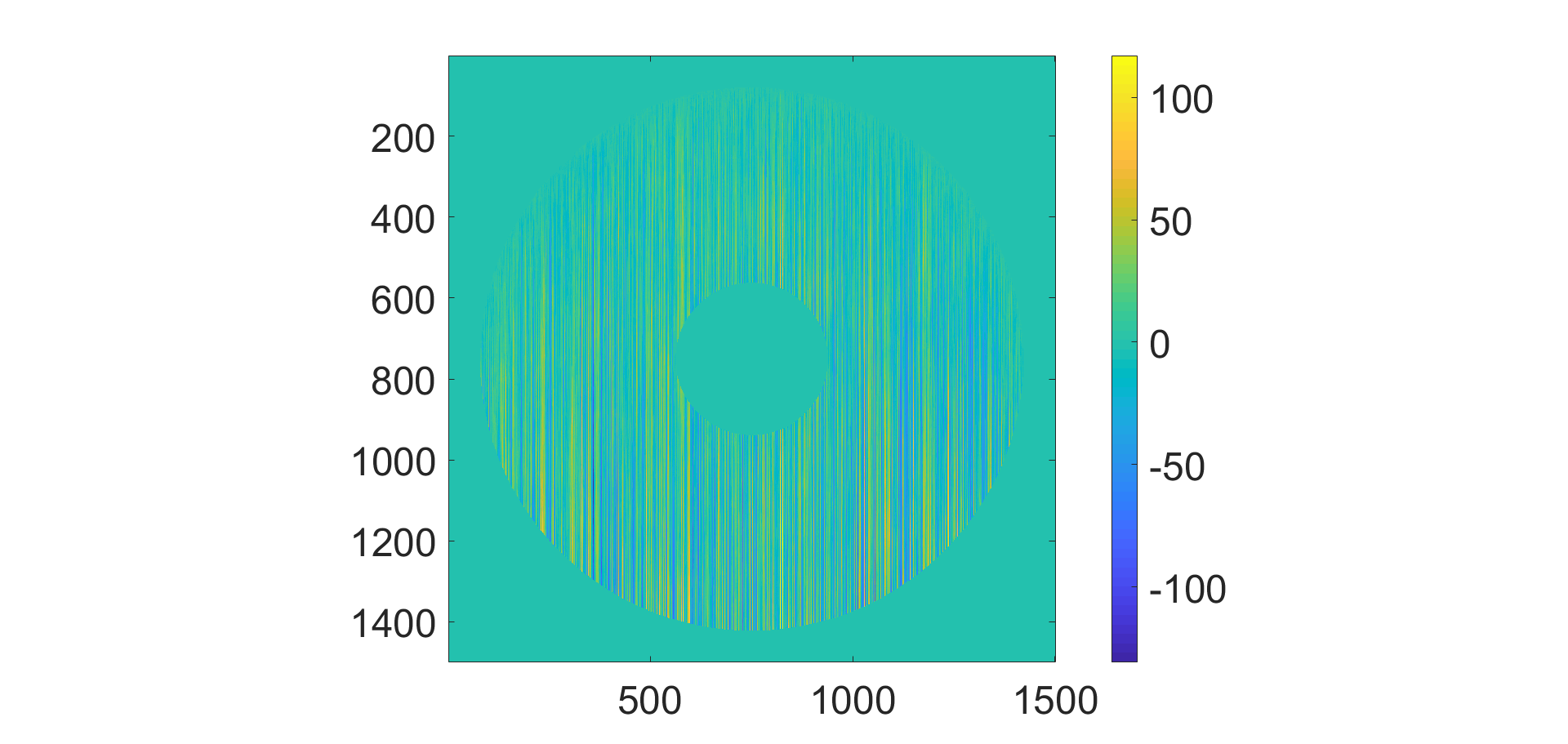} & 
        \includegraphics[width=0.4\textwidth,trim = {12cm 0.5cm 11cm 1cm}, clip=true]{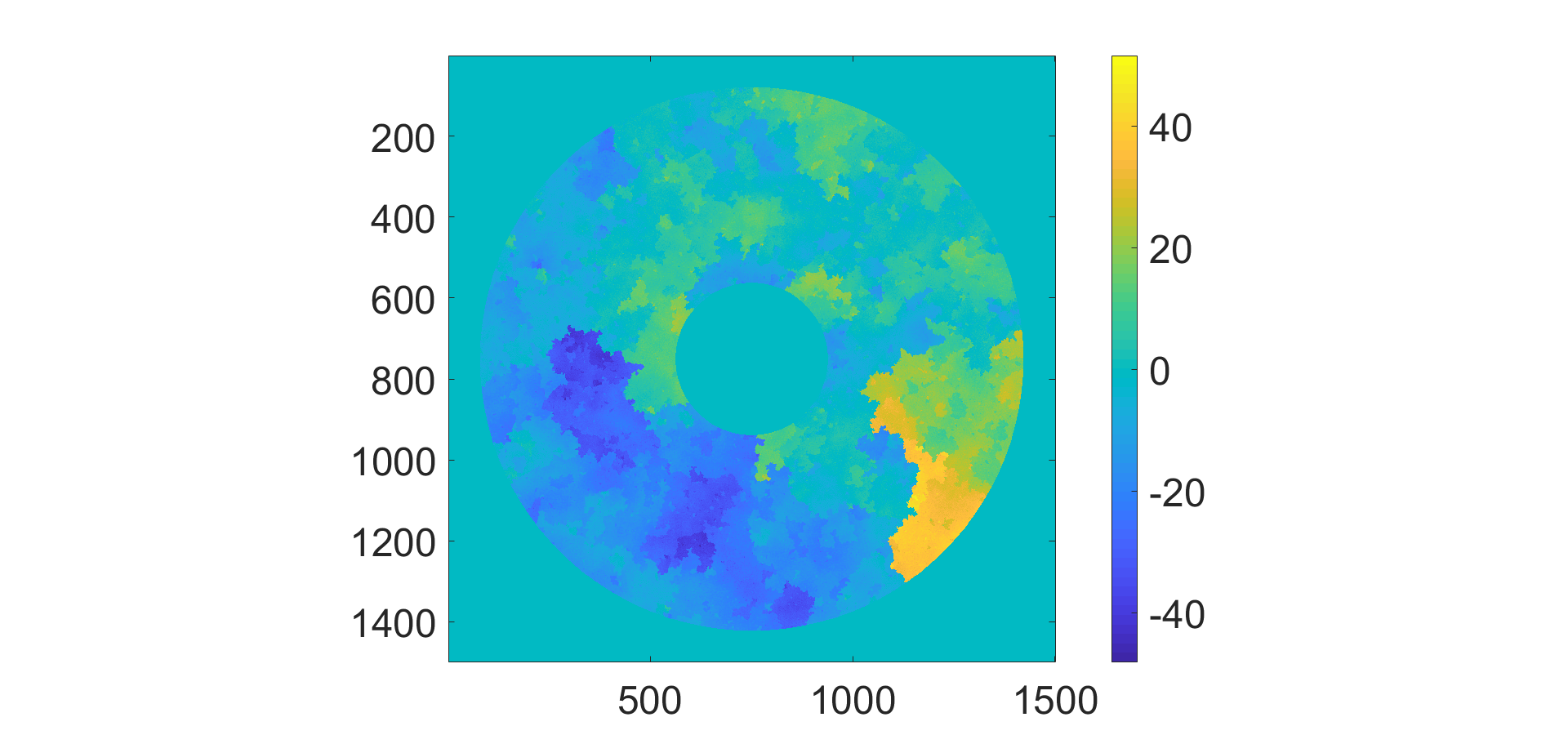} & 
        \includegraphics[width=0.4\textwidth,trim = {12cm 0.5cm 11cm 1cm}, clip=true]{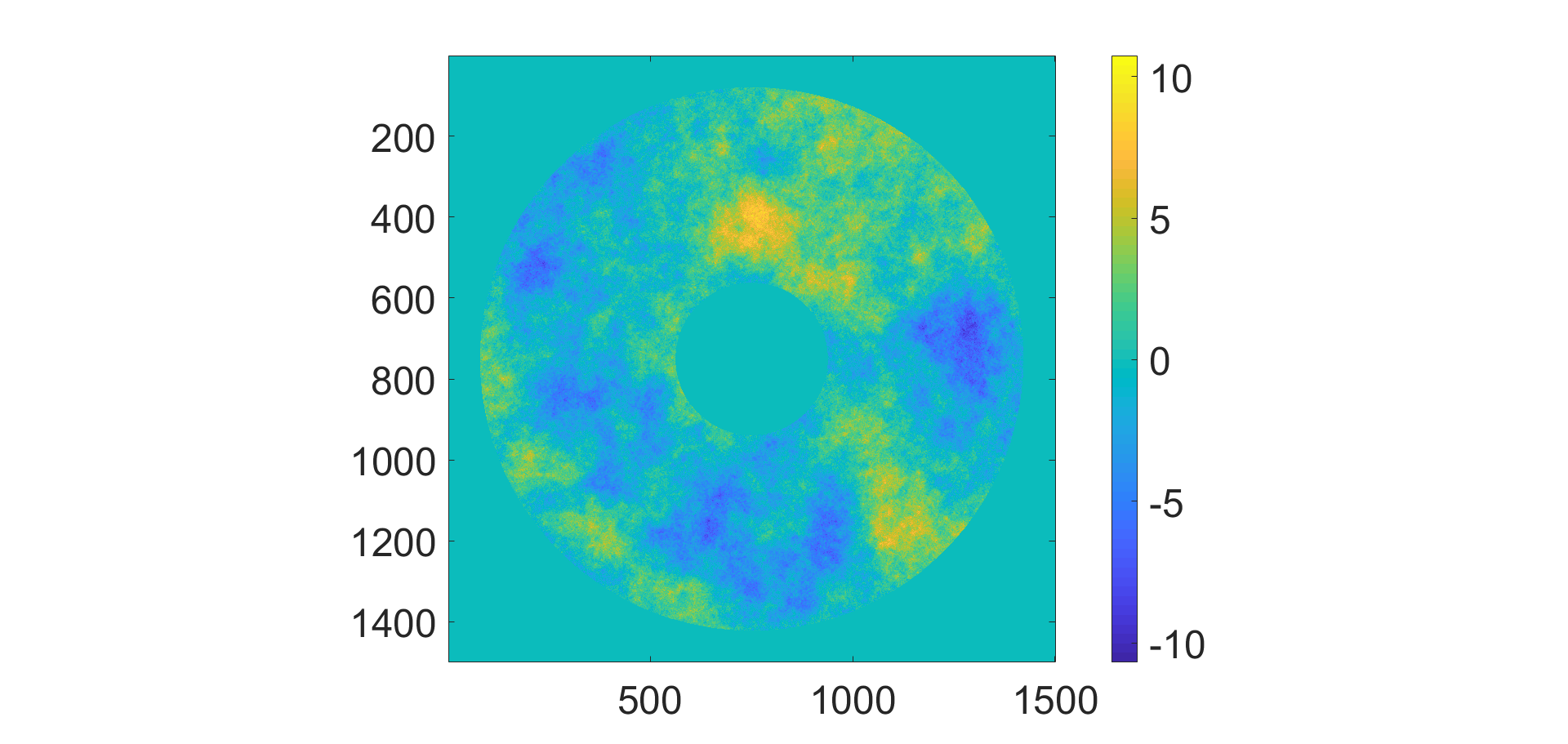} & 
        \includegraphics[width=0.4\textwidth,trim = {12cm 0.5cm 11cm 1cm}, clip=true]{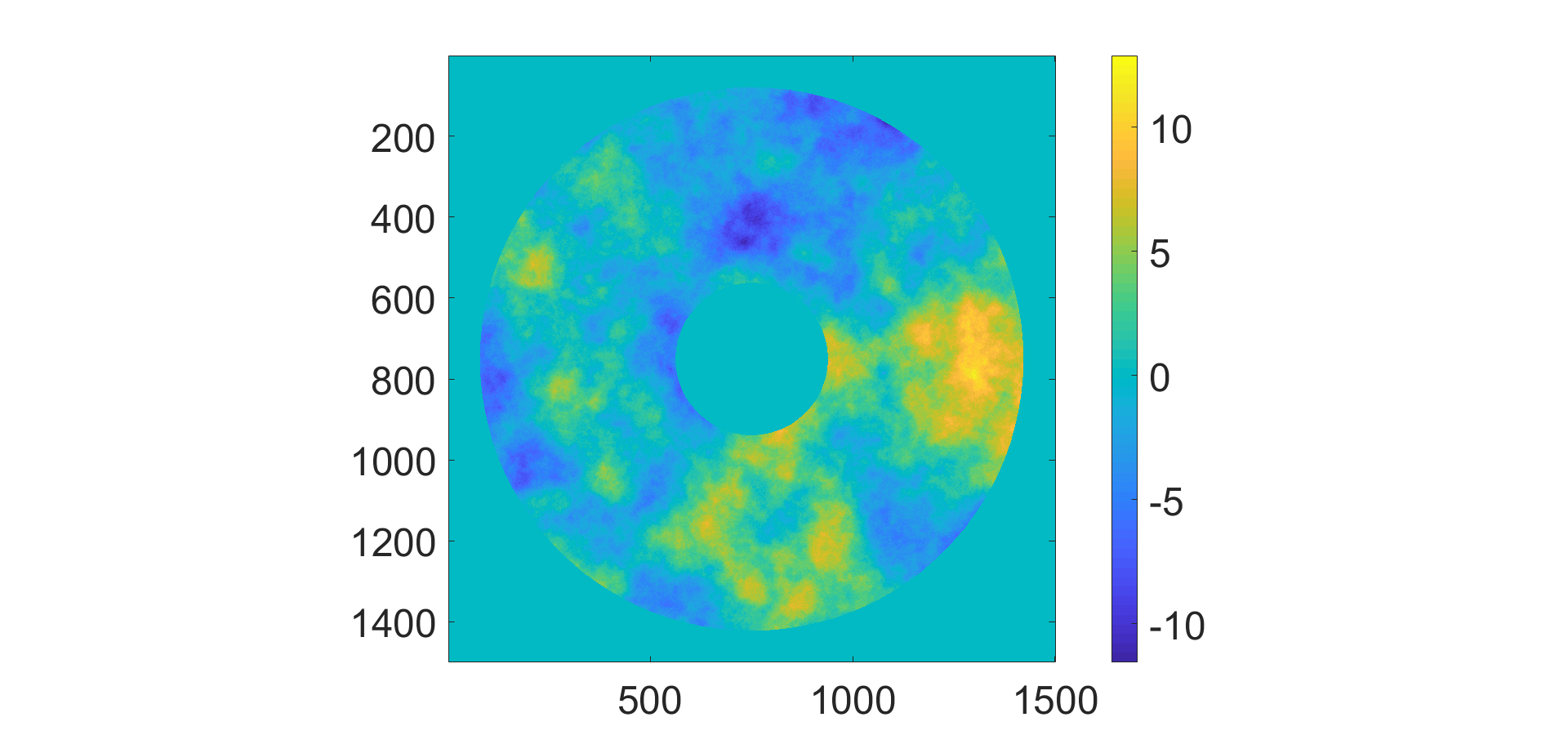}\\
        \includegraphics[width=0.4\textwidth,trim = {12cm 0.5cm 11cm 1cm}, clip=true]{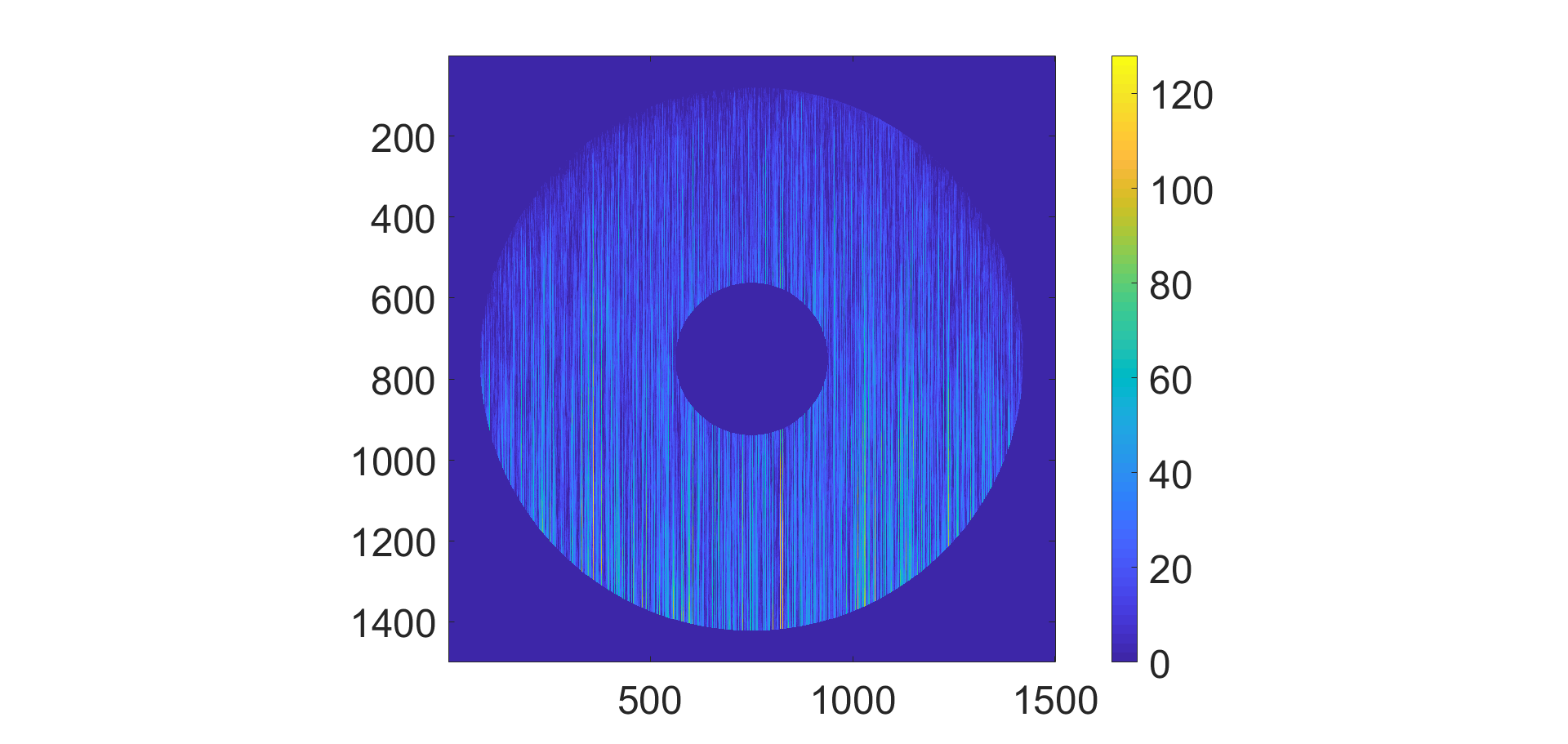} & 
        \includegraphics[width=0.4\textwidth,trim = {12cm 0.5cm 11cm 1cm}, clip=true]{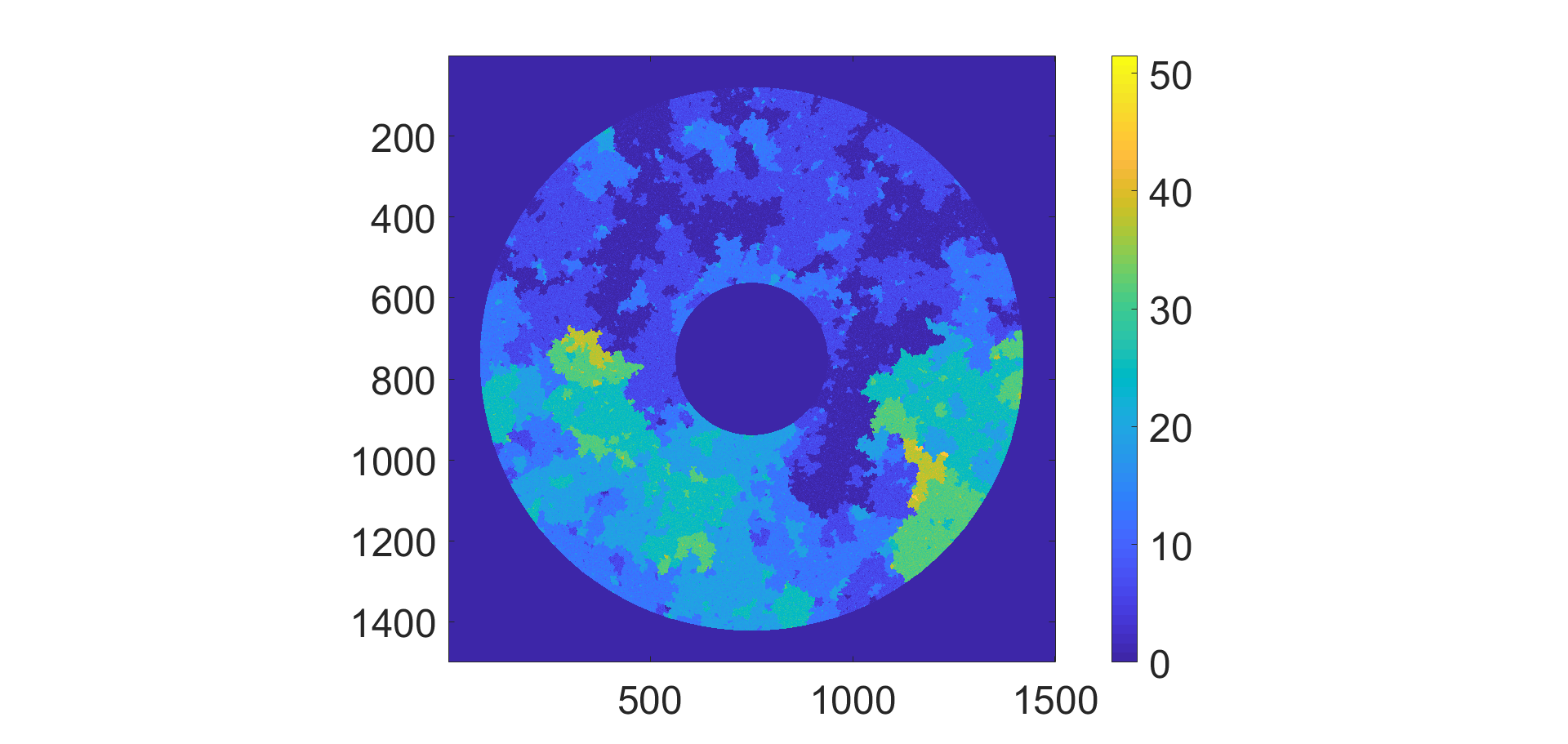} & 
        \includegraphics[width=0.4\textwidth,trim = {12cm 0.5cm 11cm 1cm}, clip=true]{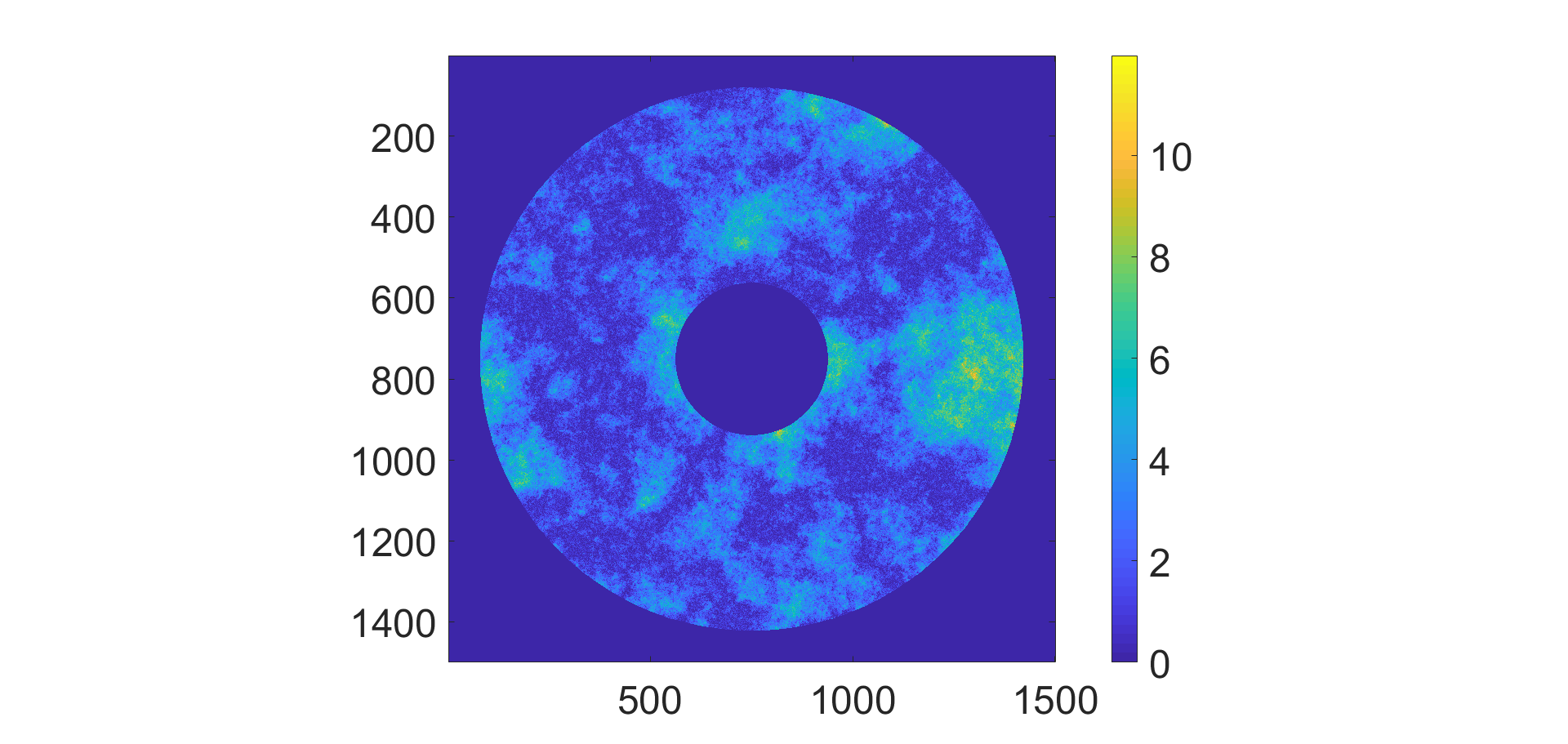} &
        \includegraphics[width=0.4\textwidth,trim = {12cm 0.5cm 11cm 1cm}, clip=true]{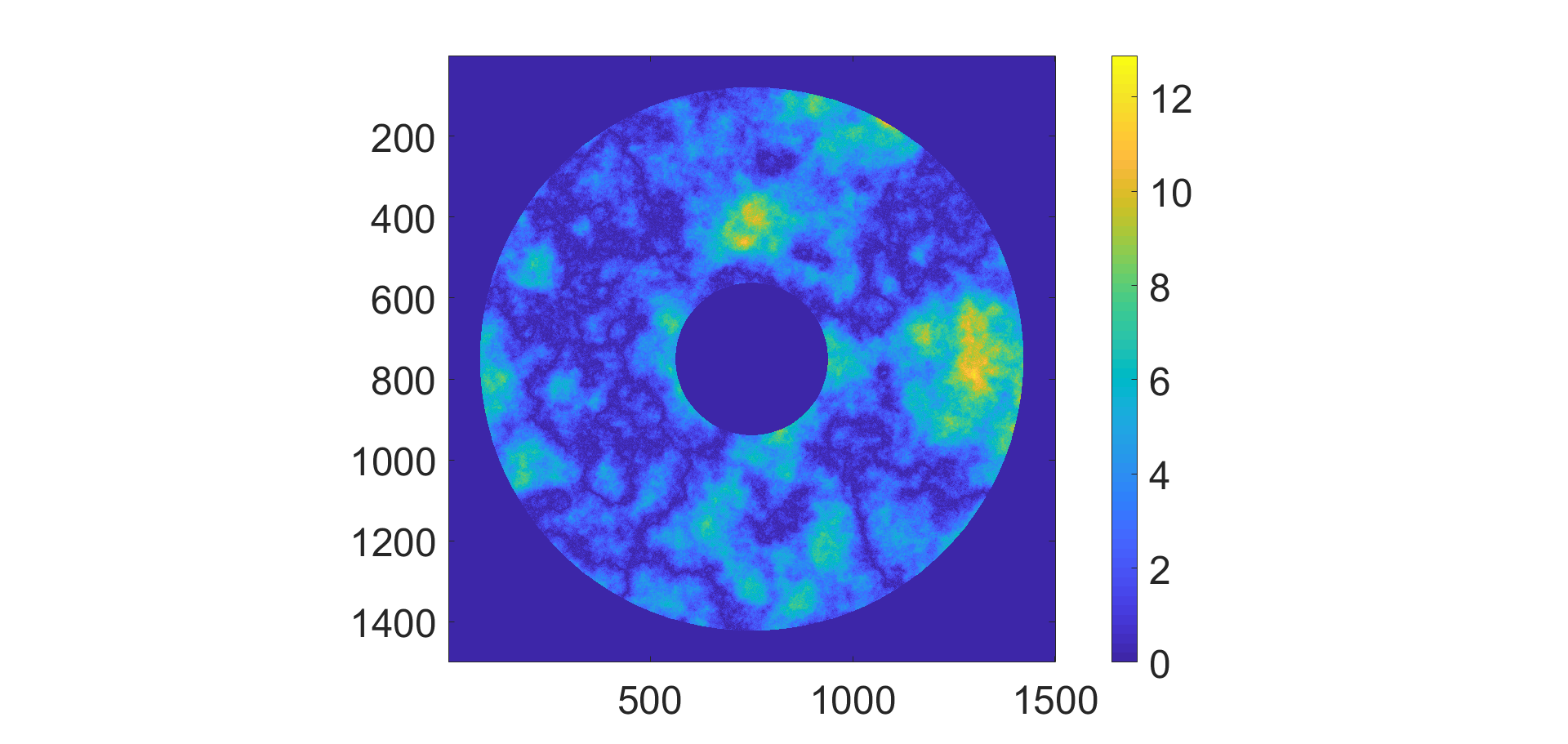}
    \end{tabular}
    }
    \caption{Unwrapped phases $\phir$ obtained via different phase unwrapping approaches (top) applied to the noisy wrapped phase $\phiwd$ depicted in Figure~\ref{fig_OCTOPUS_data}, and the corresponding absolute difference to the non-wrapped phase $\phi$ (bottom). From left to right: Matlab built-in function, unwrapping based on most reliable pixels (MRP) \cite{Herraez_2002,Firman_2023}, the Poisson equation (PE)\cite{Zhao_2020,Zhao_2023_1} and on the transport of intensity equation (TIE) \cite{Zhao_2019,Zhao_2023_2}.}
    \label{fig_OCTOPUS_results_Standard}
\end{figure}

Finally, we compare the results of our proposed digital WFS unwrapping approaches with those obtained by four different state-of-the art methods introduced above. The corresponding results are depicted in Figure~\ref{fig_OCTOPUS_results_Standard}. First, note that the built-in Matlab unwrapping function works only into one dimension. Depending on the given data, it may be sufficient to perform unwrapping in the two dimensions consecutively. However, in our case this entirely failed. The MRP method produces discontinuous, patch-like reconstructions which are also far off from the true phase. The best reconstruction is the one obtained via the PE method, which is similar to, but less detailed than that of the digital SH-WFS unwrapping. The TIE unwrapping captures many of the structural features of the ground truth phase, but some features almost appear to be inverted with respect to the ground truth. In summary, we find that our proposed digital SH-WFS and Fourier-type WFS methods perform at least as good as, and often much better than, these four standard approaches. Table~\ref{table_errortable} quantifies the reconstruction quality in terms of the relative error, structural similarity index (SSIM) \cite{Wang_Bovik_Sheikh_Simoncelli_2004} and multiscale SSIM (MS-SSIM) \cite{Wang_Simoncelli_Bovik_2003} for each of the considered approaches. Overall, we find that the digital SH-WFS with a suitable number of subapertures, and the digital 4-sided PWFS using the NOPE algorithm with linear starting produce the best results among all methods. 

\begin{table}[ht!]
    \centering
    \scalebox{0.8}{
    \begin{tabular}{|c|c|c|c|c|c|c|c|}
    \cline {2-8}
    \multicolumn{1}{c|}{} & \multicolumn{3}{c|}{Digital Shack-Hartmann WFS} &   \multicolumn{4}{c|}{Digital 4-sided Pyramid WFS} \\ \hline
    Criterion & $30 \times 30$ & $100\times 100$ & $500 \times 500$ & PCuReD & NOPE & NOPE+Lin. & NOPE+Lin.($s=0.5$)  \\ \hline \hline
    Rel.Error & 41.51\% & 41.38\% & 193.87\% & 27.92\% & 67.37\% & \textbf{11.19\%} & 30.8\%   \\ \hline 
    SSIM & 0.5208 & \textbf{0.6119} & 0.4103 & 0.5554 & 0.5224 & 0.4527 & 0.5236 \\ \hline
    MS-SSIM & 0.6576 & \textbf{0.8154} & 0.3654 & 0.7109 & 0.7168 & 0.7800 & 0.7120 \\ \hline
    \cline {2-8} \cline {2-8}
    \multicolumn{1}{c|}{} & \multicolumn{3}{c|}{Figure~\ref{fig_OCTOPUS_results_SH}} &  \multicolumn{4}{c|}{Figure~\ref{fig_OCTOPUS_results_pyramid}}  \\ \cline {2-8} 
    \end{tabular}}
    \\ \vspace{15pt} 
    \scalebox{0.8}{
    \begin{tabular}{|c|c|c|c|c|c|c|c|c|}
    \cline {2-9}
    \multicolumn{1}{c|}{} & \multicolumn{4}{c|}{Digital Fourier-type WFS} & \multicolumn{4}{c|}{Other Algorithms}  \\ \hline
    Criterion & 3-sided Pyramid & Roof & Cone & iQuad & MATLAB & MRP & PE & TIE \\ \hline \hline
    Rel.Error & 73.27\% & 86.18\% & 89.96\% & 98.75\% & 724.96\% & 417.43\% & 66.94\% & 81.17\% \\ \hline 
    SSIM & 0.5119 & 0.4629 & 0.4233 & 0.4121 & 0.4080 & 0.4171 & 0.4438 & 0.4431 \\ \hline
    MS-SSIM & 0.7018 & 0.6439 & 0.4583 & 0.4671 & 0.3771 & 0.5101 & 0.6376 & 0.5885 \\ \hline
    \cline {2-9} \cline {2-9}
    \multicolumn{1}{c|}{} & \multicolumn{4}{c|}{Figure~\ref{fig_OCTOPUS_results_pyramid_others}} & \multicolumn{4}{c|}{Figure~\ref{fig_OCTOPUS_results_Standard}}  \\ \cline {2-9} 
    \end{tabular}}
    \caption{Comparison of the relative error, structural similarity index (SSIM) \cite{Wang_Bovik_Sheikh_Simoncelli_2004} and multiscale SSIM (MS-SSIM) \cite{Wang_Simoncelli_Bovik_2003} for each of the considered unwrapping approaches.}
    \label{table_errortable}
\end{table}

\subsubsection*{Numerical experiment 2: Unwrapping without ground truth}

\begin{figure}[ht!]
    \centering
    \includegraphics[width=0.35\textwidth, trim = {13cm 0.5cm 12cm 1cm}, clip=true]{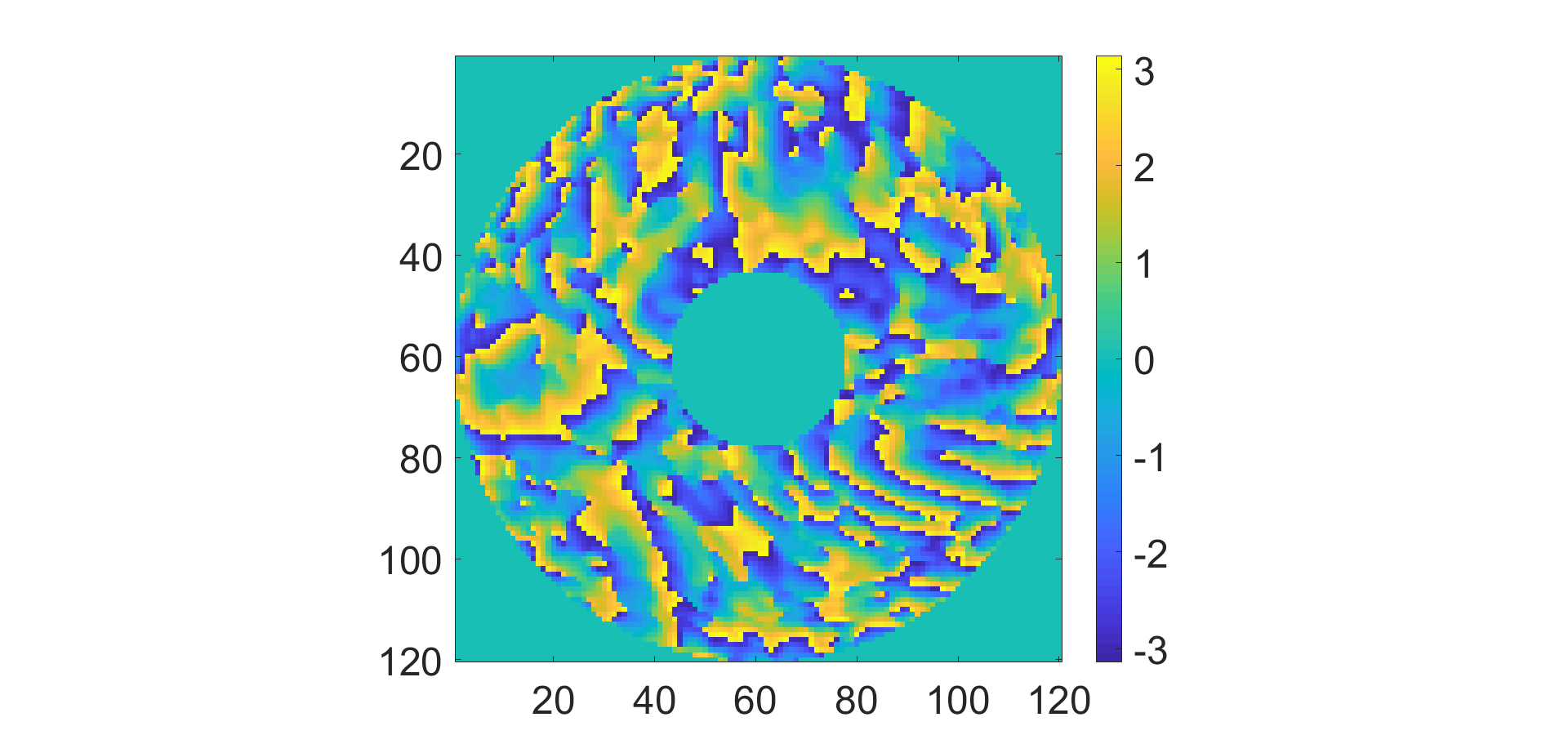}
    \quad
    \includegraphics[width=0.35\textwidth, trim = {13cm 0.5cm 12cm 1cm}, clip=true]{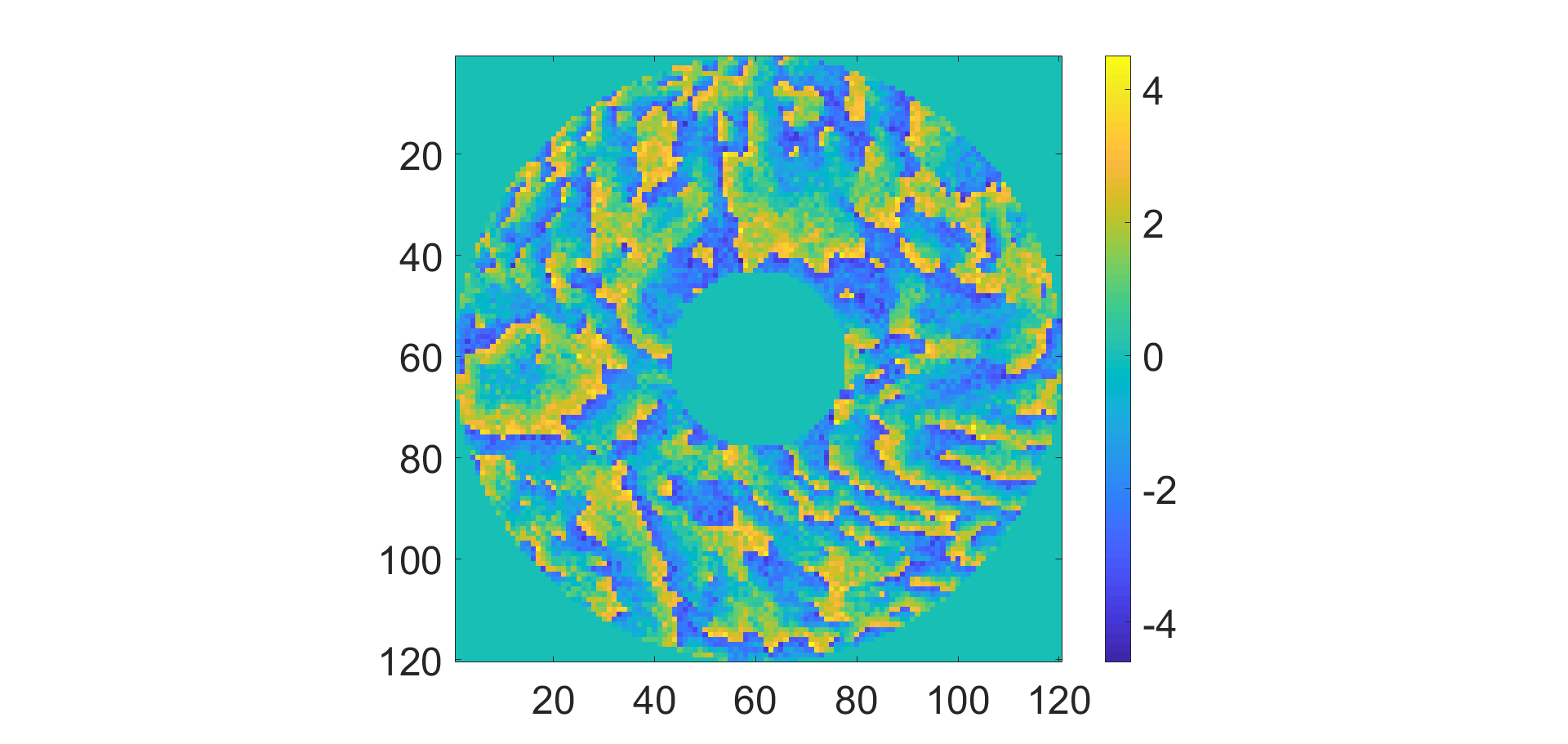}
    \caption{Realistic non wrapped phase $\phiw$ (left) occurring during an ALASCA daytime simulation and noisy wrapped phase $\phiwd$ including $20\%$ noise (right). }
    \label{fig_ALASCA_data}
\end{figure}

For the second example, we consider a phase unwrapping problem of FSOC, in particular from the Advanced Laser Guide Star Adaptive Optics for Satellite Communication Assessments (ALASCA) project \cite{ALASCA} currently under development, for which we now provide some brief background. Within the ALASCA project, an optical communication channel from the optical ground station of the European Space Agency (ESA) in Tenerife to communication satellites is developed. The concept is based on laser guide star AO and aims for 24/7 operations. In the design phase of the project, reliable simulations of the laser guide star AO system are crucial. Hence, the laser propagation in a turbulent atmosphere has to be modelled. This usually involves physical optics propagation methods and thus phase screens which contain $2\pi$ ambiguities. The simulations for ALASCA are carried out via the PyrAmid Simulator Software for Adaptive opTics Arcetri (PASSATA)\cite{Agapito2016}, which is IDL and CUDA based object oriented software for Monte-Carlo end-to-end AO simulations. Figure~\ref{fig_ALASCA_data} shows a typical wrapped phase screen occurring during daytime conditions, i.e., strong turbulence, simulated with PASSATA, for which no ground truth unwrapped phase is available.

\begin{figure}[ht!]
\centering
\small
\resizebox{\columnwidth}{!}{%
    \begin{tabular}{ccc}
        \textbf{SH-WFS} &  \textbf{PWFS + NOPE} &  \textbf{MATLAB}\\
        \includegraphics[width=0.3\textwidth,trim = {13cm 0.5cm 12cm 1cm}, clip=true]{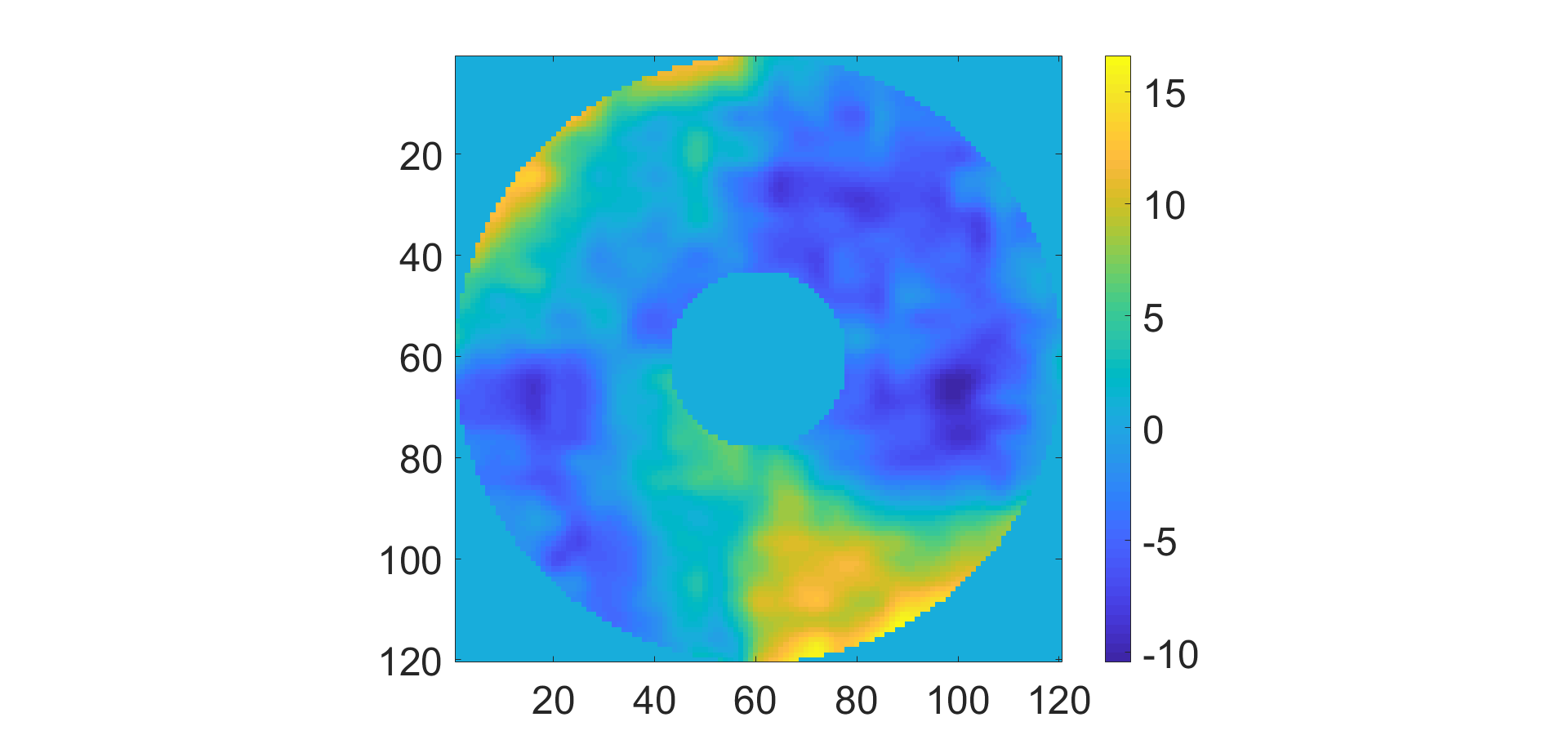} &
        \includegraphics[width=0.3\textwidth,trim = {13cm 0.5cm 12cm 1cm}, clip=true]{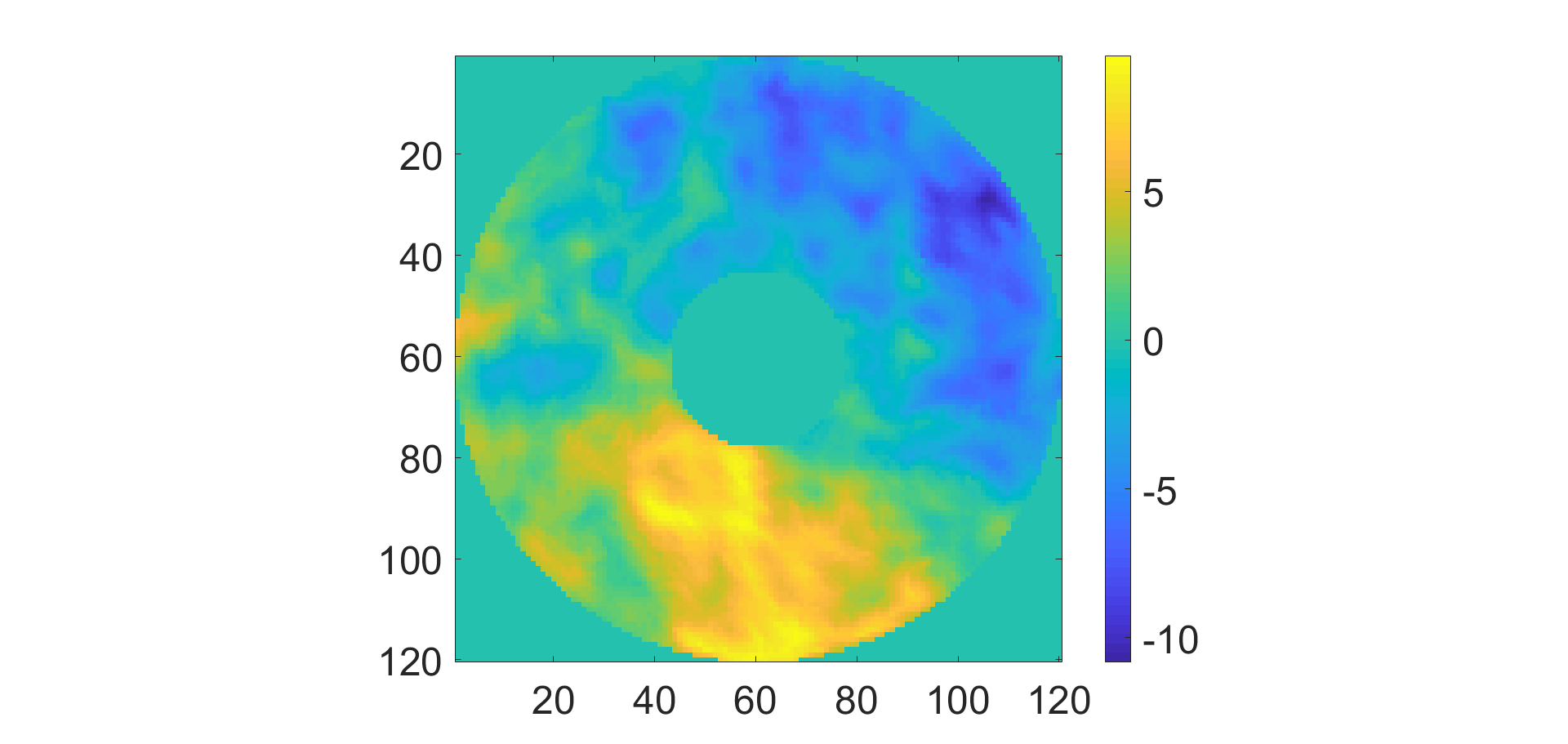} 
        &
        \includegraphics[width=0.3\textwidth,trim = {13cm 0.5cm 12cm 1cm}, clip=true]{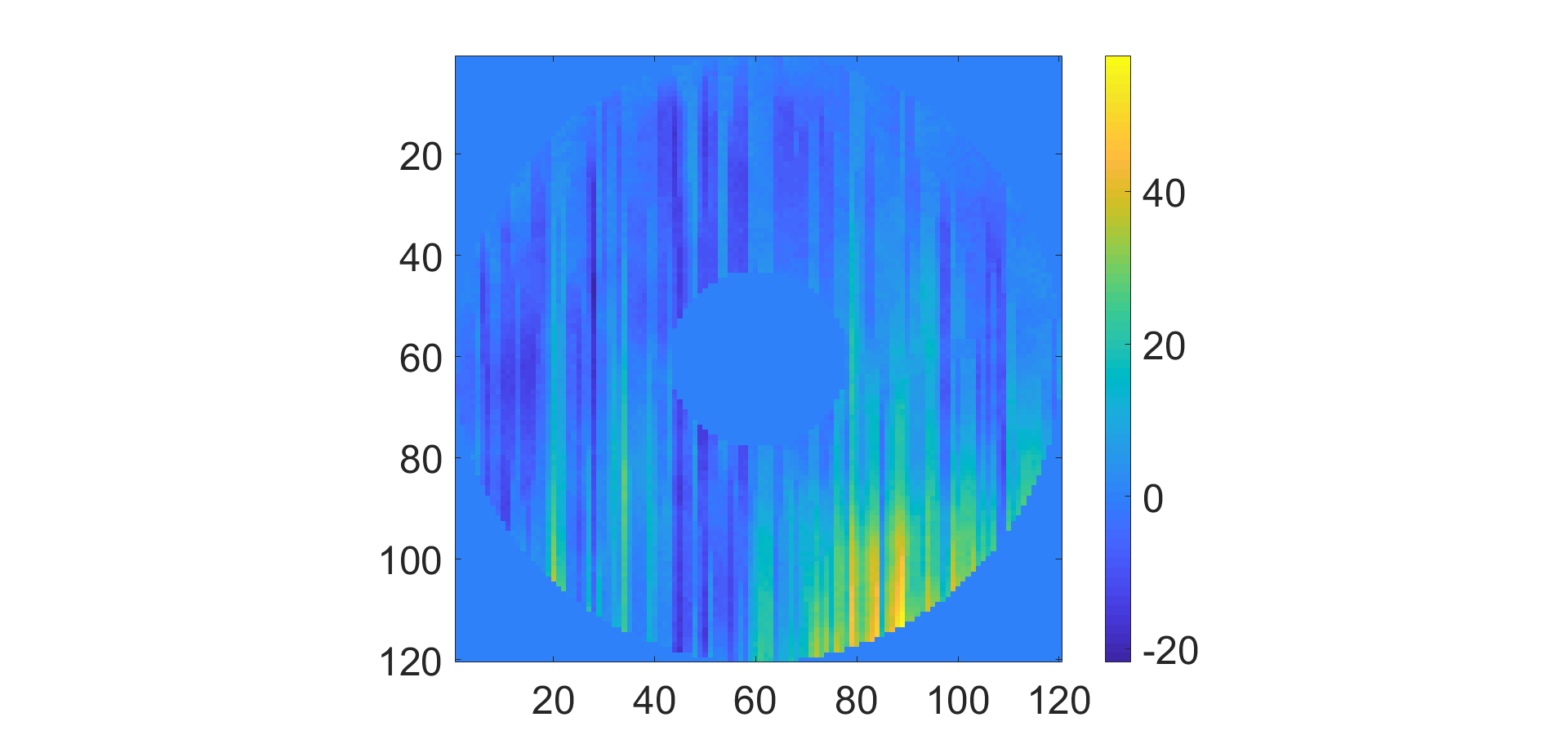} 
        \\
        \textbf{MRP} & \textbf{PE} &  \textbf{TIE}\\
        \includegraphics[width=0.3\textwidth,trim = {13cm 0.5cm 12cm 1cm}, clip=true]{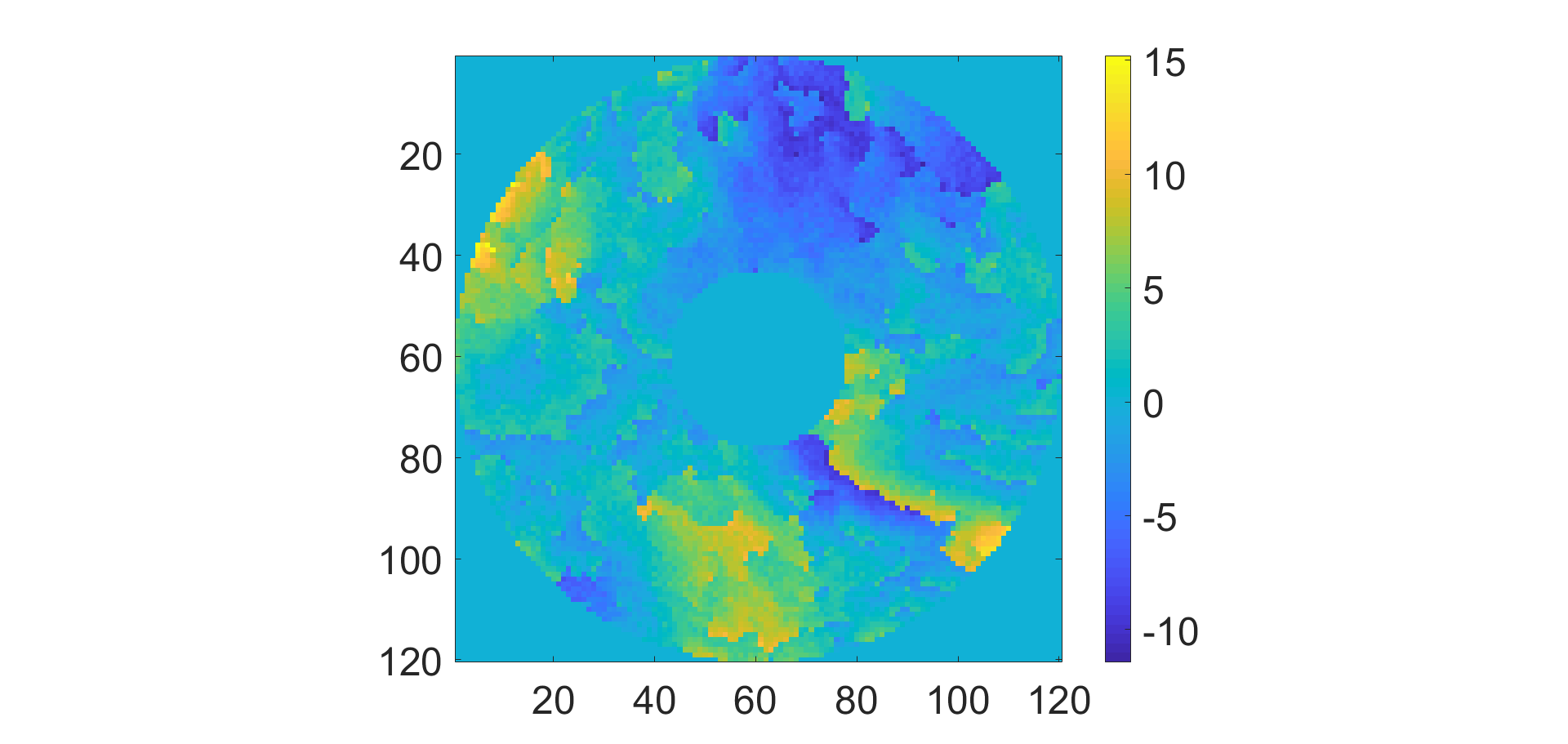} & 
        \includegraphics[width=0.3\textwidth,trim = {13cm 0.5cm 12cm 1cm}, clip=true]{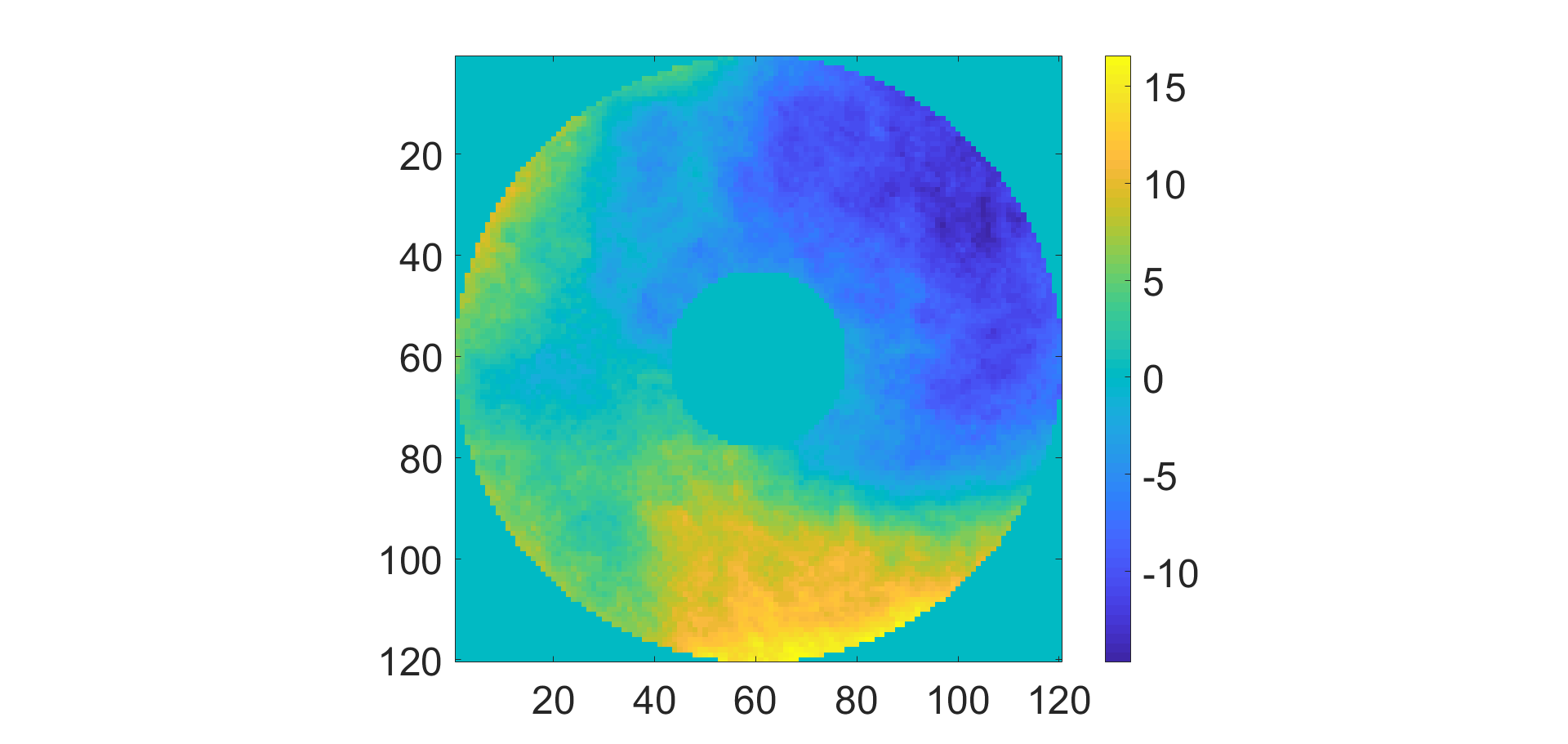} & 
        \includegraphics[width=0.3\textwidth,trim = {13cm 0.5cm 12cm 1cm}, clip=true]{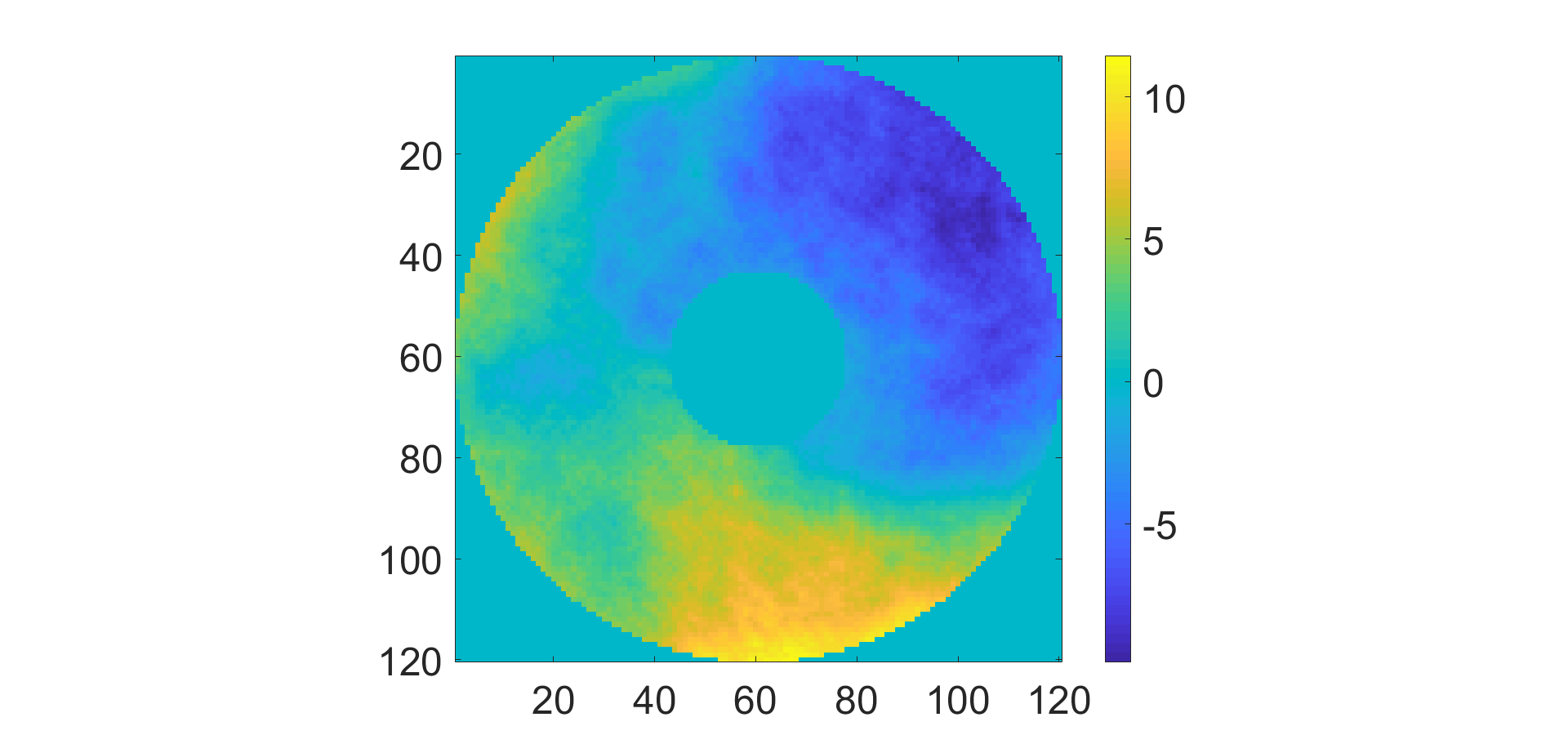}
    \end{tabular}
    }
    \caption{Unwrapped phases $\phir$ obtained via different phase unwrapping approaches applied to the noisy wrapped phase $\phiwd$ depicted in Figure~\ref{fig_ALASCA_data}. From left top to right bottom: Digital SH-WFS unwrapping, i.e., Algorithm~\ref{algo_dSH_WFS}, digital 4-sided PWFS unwrapping, i.e., Algorithm~\ref{algo_dFb_WFS}, MATLAB built-in function, unwrapping based on most reliable pixels (MRP), the Poisson equation (PE) and on the transport of intensity equation (TIE).}
    \label{fig_ALASCA_results}
\end{figure}

In Figure~\ref{fig_ALASCA_results}, we compare the unwrapped phases $\phir$ produced by the four state-of-the-art methods considered above to the two digital WFS approaches which performed best on simulated data: The digital SH-WFS, and the digital 4-sided PWFS using the NOPE and linear starting. Due to the smaller size of the wrapped phase data, the shown results for the digital SH-WFS were obtained using $30 \times 30$ subapertures. As before, the built-in MATLAB function and the MRP method produce unacceptable reconstructions, with either line or patch artefacts corrupting the phase. The other methods perform broadly similar, recovering the same general structure of the phase but with a varying amount of resolution. While due to the absence of a ground truth we are unable to say which of the unwrapping methods performs best, we nevertheless see that our digital WFS-based approaches produce unwrapped phases similar to those of the PE and TIE methods, and at a similar computational speed.

\section{Conclusion}\label{sect_concl}

In this paper, we introduced a new class of phase unwrapping algorithms based on the concept of digital WFSs. In their derivation, the key observation was that when considered as wavefront aberrations in an optical system, wrapped and unwrapped phases produce the same sensor measurements. Since reconstruction algorithms for WFSs are typically optimized to provide smooth wavefront reconstructions, they can be applied to compute an unwrapped phase from digitally computed WFS measurements. This procedure was then elaborated in detail for two classes of digital WFSs, based on both the SH and Fourier-type WFSs. Numerical examples on a real-world adaptive optics unwrapping problem demonstrate that the obtained results are comparable to, and in some cases exceed, the performance of other state-of-the-art unwrapping algorithms.

\section{Support}

This research was funded in part by the Austrian Science Fund (FWF) SFB 10.55776/F68 ``Tomography Across the Scales'', project F6805-N36 (Tomography in Astronomy) and project F6807-N36 (Tomography with Uncertainties), as well as 10.55776/P34981 ``New Inverse Problems of Super-Resolved Microscopy (NIPSUM)''. For open access purposes, the authors have applied a CC BY public copyright license to any author-accepted manuscript version arising from this submission. Furthermore, the authors were supported by the Austrian Research Promotion Agency (FFG) project number FO999888133, as well as the NVIDIA Corporation Academic Hardware Grant Program. The financial support by the Austrian Federal Ministry for Digital and Economic Affairs, the National Foundation for Research, Technology and Development and the Christian Doppler Research Association is gratefully acknowledged.

\bibliographystyle{plain}
{\footnotesize
\bibliography{mybib,bib_fourier_clear,bib_bernadett}

\begin{thebibliography}{10}

\bibitem{ALASCA}
{ALASCA Advanced Laser Guide Star Adaptive Optics for Satellite Communication
  Assessments}.
\newblock \url{https://connectivity.esa.int/projects/alasca}.
\newblock Accessed: 2023-08-17.

\bibitem{Agapito2016}
G.~Agapito, A.~Puglisi, and S.~Esposito.
\newblock {PASSATA: object oriented numerical simulation software for adaptive
  optics}.
\newblock In Enrico Marchetti, Laird~M. Close, and Jean-Pierre V{\'e}ran,
  editors, {\em Adaptive Optics Systems V}, volume 9909, page 99097E.
  International Society for Optics and Photonics, SPIE, 2016.

\bibitem{Basden_2018}
A.~G. Basden, N.~A. Bharmal, D.~Jenkins, T.~J. Morris, J.~Osborn, J.~Peng, and
  L.~Staykov.
\newblock {The Durham Adaptive Optics Simulation Platform (DASP): Current
  status}.
\newblock {\em SoftwareX}, 7:63--69, 2018.

\bibitem{Calvo_etal_2019}
R.~M. Calvo, J.~Poliak, J.~Surof, A.~Reeves, M.~Richerzhagen, H.~F. Kelemu,
  R.~Barrios, C.~Carrizo, R.~Wolf, F.~Rein, A.~Dochhan, K.~Saucke, and
  W.~Luetke.
\newblock {Optical technologies for very high throughput satellite
  communications}.
\newblock In Hamid Hemmati and Don~M. Boroson, editors, {\em Free-Space Laser
  Communications XXXI}, volume 10910, page 109100W. SPIE, 2019.

\bibitem{Chan_2006}
V.~W.~S. Chan.
\newblock {Free-Space Optical Communications}.
\newblock {\em Journal of Lightwave Technology}, 24(12):4750--4762, 2006.

\bibitem{Clare_ao4elt5}
R.~M. Clare, B.~E. Engler, S.~Weddell, Iu. Shatokhina, A.~Obereder, and
  M.~Le~Louarn.
\newblock {Numerical evaluation of pyramid type sensors for extreme {A}daptive
  {O}ptics for the European Extremely Large Telescope}.
\newblock In {\em Proceedings of the Fifth AO4ELT Conference}, 2017.

\bibitem{ClareCone2020}
R.~M. Clare, B.~E. Engler, and S.~J. Weddell.
\newblock Wavefront reconstruction with the cone sensor.
\newblock In {\em 35th International Conference on Image and Vision Computing
  New Zealand (IVCNZ)}, pages 1--6, 2020.

\bibitem{Ellerbroek_Vogel_2009}
B.~L. Ellerbroeck and C.~R. Vogel.
\newblock Inverse problems in astronomical optics.
\newblock {\em Inverse Problems}, 25(6):063001 (37pp), 2009.

\bibitem{engler2017}
B.~E. Engler, S.~J. Weddell, and R.~M. Clare.
\newblock Wavefront sensing with prisms for astronomical imaging with
  {A}daptive {O}ptics.
\newblock In {\em International Conference on Image and Vision Computing New
  Zealand (IVCNZ)}, pages 1--7, 2017.

\bibitem{Farley_2022}
O.~J.~D. Farley, M.~J. Townson, and J.~Osborn.
\newblock Fast simulation of ground-space optical links with adaptive optics.
\newblock In {\em Imaging and Applied Optics Congress 2022 (3D, AOA, COSI, ISA,
  pcAOP)}, page OF2B.1. Optica Publishing Group, 2022.

\bibitem{FaHuShaRaLAM19_AO4ELTproc}
O.~Fauvarque, V.~Hutterer, P.~Janin-Potiron, J.~Duboisset, C.~Correia,
  B.~Neichel, J.-F. Sauvage, T.~Fusco, Iu. Shatokhina, R.~Ramlau,
  V.~Chambouleyron, and Y.~Br{\^u}l{\'e}.
\newblock {The {\i}{Q}uad sensor: a new {F}ourier-based wave front sensor
  derived from the 4 quadrants coronagraph}.
\newblock In {\em {6th International Conference on Adaptive Optics for
  Extremely Large Telescopes, AO4ELT 2019}}, Quebec City, Canada, 2019.

\bibitem{Fauv16}
O.~Fauvarque, B.~Neichel, T.~Fusco, J.-F. Sauvage, and O.~Girault.
\newblock General formalism for {F}ourier-based wave front sensing.
\newblock {\em Optica}, 3(12):1440--1452, 2016.

\bibitem{Firman_2023}
Firman.
\newblock Fast 2d phase unwrapping.
\newblock \url{https://github.com/mfkasim1/unwrap_phase}.

\bibitem{Gens_2003}
R.~Gens.
\newblock Two-dimensional phase unwrapping for radar interferometry:
  Developments and new challenges.
\newblock {\em International Journal of Remote Sensing}, 24(4):703--710, 2003.

\bibitem{Ghiglia_1998}
D.~C. Ghiglia and M.~D. Pritt.
\newblock {\em Two-Dimensional Phase Unwrapping: Theory, Algorithms, and
  Software}.
\newblock 1998.

\bibitem{Ghiglia_1989}
D.~C. Ghiglia and L.~A. Romero.
\newblock Direct phase estimation from phase differences using fast elliptic
  partial differential equation solvers.
\newblock {\em Opt. Lett.}, 14(20):1107--1109, 1989.

\bibitem{Ghiglia_1994}
D.~C. Ghiglia and L.~A. Romero.
\newblock Robust two-dimensional weighted and unweighted phase unwrapping that
  uses fast transforms and iterative methods.
\newblock {\em J. Opt. Soc. Am. A}, 11(1):107--117, 1994.

\bibitem{Gilles_Ellerbroek_2006}
L.~Gilles and B.~Ellerbroek.
\newblock {Shack-Hartmann wavefront sensing with elongated sodium laser
  beacons: centroiding versus matched filtering}.
\newblock {\em Applied Optics}, 45(25):6568--6576, 2006.

\bibitem{Goodman_2005}
J.~W. Goodman.
\newblock {\em Introduction to Fourier Optics}.
\newblock McGraw-Hill physical and quantum electronics series. W. H. Freeman,
  2005.

\bibitem{Herraez_2002}
M.~A. Herr\'{a}ez, D.~R. Burton, M.~J. Lalor, and M.~A. Gdeisat.
\newblock Fast two-dimensional phase-unwrapping algorithm based on sorting by
  reliability following a noncontinuous path.
\newblock {\em Appl. Opt.}, 41(35):7437--7444, 2002.

\bibitem{Huang2022}
W.~Huang, X.~Mei, Y.~Wang, Z.~Fan, C.~Chen, and G.~Jiang.
\newblock {Two-dimensional phase unwrapping by a high-resolution deep learning
  network}.
\newblock {\em Measurement}, 200:111566, 2022.

\bibitem{Hubmer_Sherina_Ramlau_Pircher_Leitgeb_2023}
S.~Hubmer, E.~Sherina, R.~Ramlau, M.~Pircher, and R.~Leitgeb.
\newblock {Subaperture-Based Digital Aberration Correction for Optical
  Coherence Tomography: A Novel Mathematical Approach}.
\newblock {\em SIAM Journal on Imaging Sciences}, 16(4):1857--1885, 2023.
\newblock Green OA.

\bibitem{HuNeuSha_2023}
V.~Hutterer, A.~Neubauer, and J.~Shatokhina.
\newblock A mathematical framework for nonlinear wavefront reconstruction in
  adaptive optics systems with {F}ourier-type wavefront sensing.
\newblock {\em Inverse Problems}, 39(3):035007, 2023.

\bibitem{Potiron2019}
P.~Janin-Potiron, V.~Chambouleyron, L.~Schatz, O.~Fauvarque, C.~Z. Bond,
  Y.~Abautret, E.~R. Muslimov, K.~El Hadi, J.-F. Sauvage, K.~Dohlen,
  B.~Neichel, C.~M. Correia, and T.~Fusco.
\newblock {{A}daptive {O}ptics with programmable {F}ourier-based wavefront
  sensors: a spatial light modulator approach to the {LAM/ONERA} on-sky pyramid
  sensor testbed}.
\newblock {\em Journal of Astronomical Telescopes, Instruments, and Systems},
  5(3):1 -- 10, 2019.

\bibitem{Judge_1994}
T.~R. Judge and P.~J. Bryanston-Cross.
\newblock A review of phase unwrapping techniques in fringe analysis.
\newblock {\em Optics and Lasers in Engineering}, 21(4):199--239, 1994.

\bibitem{Kaplan_2007}
S.~T. Kaplan and T.~J. Ulrych.
\newblock {Phase Unwrapping: A Review of Methods and a Novel Technique}.
\newblock In {\em Let it Flow - 2007 CSPG CSEG Convention}. International
  Society for Optics and Photonics, 2007.

\bibitem{Kolmogorov_1991}
A.~N. Kolmogorov.
\newblock {The Local Structure of Turbulence in Incompressible Viscous Fluid
  for Very Large Reynolds Numbers}.
\newblock {\em Proceedings: Mathematical and Physical Sciences},
  434(1890):9--13, 1991.

\bibitem{Kumar_Drexler_Leitgeb_2013}
A.~Kumar, W.~Drexler, and R.~A. Leitgeb.
\newblock {Subaperture correlation based digital adaptive optics for full field
  optical coherence tomography}.
\newblock {\em Opt. Express}, 21(9):10850--10866, 2013.

\bibitem{Kumar_Wurster_Salas_Ginner_Drexler_Leitgeb_2017}
A.~Kumar, L.~M. Wurster, M.~Salas, L.~Ginner, W.~Drexler, and R.~A. Leitgeb.
\newblock {In-vivo digital wavefront sensing using swept source OCT}.
\newblock {\em Biomed. Opt. Express}, 8(7):3369--3382, 2017.

\bibitem{octopus06}
M.~{Le Louarn}, C.~V\'{e}rinaud, V.~Korkiakoski, N.~Hubin, and E.~Marchetti.
\newblock {A}daptive optics simulations for the {E}uropean {E}xtremely {L}arge
  {T}elescope.
\newblock In {\em SPIE Astronomical Telescopes+Instrumentation}, pages
  627234--627234. International Society for Optics and Photonics, 2006.

\bibitem{Liang97}
J.~Liang, D.~R. Williams, and D.~T. Miller.
\newblock Supernormal vision and high-resolution retinal imaging through
  adaptive optics.
\newblock {\em J. Opt. Soc. Am. A}, 14(11):2884--2892, 1997.

\bibitem{Martinez_2017}
J.~Martinez-Carranza, K.~Falaggis, and T.~Kozacki.
\newblock Fast and accurate phase-unwrapping algorithm based on the transport
  of intensity equation.
\newblock {\em Appl. Opt.}, 56(25):7079--7088, 2017.

\bibitem{Osborn_2021}
J.~Osborn, M.~J. Townson, O.~J.~D. Farley, A.~Reeves, and R.~M. Calvo.
\newblock {Adaptive Optics pre-compensated laser uplink to LEO and GEO}.
\newblock {\em Opt. Express}, 29(4):6113--6132, 2021.

\bibitem{Pandey_2016}
N.~Pandey, A.~Ghosh, and K.~Khare.
\newblock Two-dimensional phase unwrapping using the transport of intensity
  equation.
\newblock {\em Appl. Opt.}, 55(9):2418--2425, 2016.

\bibitem{Pircher_Zawadzki_2017}
M.~Pircher and R.~J. Zawadzki.
\newblock {Review of adaptive optics OCT (AO-OCT): Principles and applications
  for retinal imaging}.
\newblock {\em Biomed Opt Express}, 8(5):2536--62, 2017.

\bibitem{Platt_Shack_2001}
B.~C. Platt and R.~Shack.
\newblock {History and principles of Shack-Hartmann Wavefront Sensing}.
\newblock {\em Journal of Refractive Surgery}, 17(5):S573--S577, 2001.

\bibitem{Primot_2003}
J.~Primot.
\newblock {Theoretical description of Shack–Hartmann Wave-Front Sensor}.
\newblock {\em Optics Communications}, 221(1):81--92, 2003.

\bibitem{Raga96}
R.~Ragazzoni.
\newblock Pupil plane wavefront sensing with an oscillating prism.
\newblock {\em J. of Modern Optics}, 43(2):289--293, 1996.

\bibitem{RaDi02}
R.~Ragazzoni, E.~Diolaiti, and E.~Vernet.
\newblock A pyramid wavefront sensor with no dynamic modulation.
\newblock {\em Optics Communications}, 208:51--60, 2002.

\bibitem{RaFa99}
R.~Ragazzoni and J.~Farinato.
\newblock Sensitivity of a pyramidic {W}ave {F}ront {S}ensor in closed loop
  {A}daptive {O}ptics.
\newblock {\em Astronomy and Astrophysics}, 350:L23--L26, 1999.

\bibitem{Martinez_2019}
N.~M. Rey.
\newblock {\em Pioneering the Use of a Plenoptic Adaptive Optics System for
  Free Space Optical Communications}.
\newblock PhD thesis, Universidad de La Laguna, Instituto de Astrofisica de
  Canarias, 2019.

\bibitem{Roddier_1999}
F.~Roddier.
\newblock {\em Adaptive Optics in Astronomy}.
\newblock Cambridge University Press, 1999.

\bibitem{RoWe96}
M.~C. Roggemann and B.~Welsh.
\newblock {\em Imaging through turbulence}.
\newblock {CRC Press laser and optical science and technology series}. CRC
  Press, 1996.

\bibitem{Rosensteiner_2011_02}
M.~Rosensteiner.
\newblock {Cumulative Reconstructor: fast wavefront reconstruction algorithm
  for Extremely Large Telescopes}.
\newblock {\em J. Opt. Soc. Am. A}, 28(10):2132--2138, 2011.

\bibitem{Rosensteiner_2011_01}
M.~Rosensteiner.
\newblock {Wavefront reconstruction for Extremely Large Telescopes via CuRe
  with Domain Decomposition}.
\newblock Technical Report E-TRE-AAO-528-0018, Austrian In-Kind Contribution -
  AO, 2011.

\bibitem{ShaHuRa20}
Iu. Shatokhina, V.~Hutterer, and R.~Ramlau.
\newblock {Review on methods for wavefront reconstruction from pyramid
  wavefront sensor data}.
\newblock {\em Journal of Astronomical Telescopes, Instruments, and Systems},
  6(1):1 -- 39, 2020.

\bibitem{Shat13}
Iu. Shatokhina, A.~Obereder, M.~Rosensteiner, and R.~Ramlau.
\newblock {Preprocessed cumulative reconstructor with domain decomposition: a
  fast wavefront reconstruction method for pyramid wavefront sensor}.
\newblock {\em {Applied Optics}}, 52(12):2640--2652, 2013.

\bibitem{Stadler2022}
Bernadett Stadler and Ronny Ramlau.
\newblock {Performance of an iterative wavelet reconstructor for the
  Multi-conjugate Adaptive Optics RelaY of the Extremely Large Telescope}.
\newblock {\em Journal of Astronomical Telescopes, Instruments, and Systems},
  8(2):021503, 2022.

\bibitem{Stotts_2021}
L.~B. Stotts and L.~C. Andrews.
\newblock Adaptive optics model characterizing turbulence mitigation for free
  space optical communications link budgets.
\newblock {\em Opt. Express}, 29(13):20307--20321, 2021.

\bibitem{Takeda_1996}
M.~Takeda.
\newblock {Recent progress in phase-unwrapping techniques}.
\newblock In Christophe Gorecki, editor, {\em Optical Inspection and
  Micromeasurements}, volume 2782, pages 334 -- 343. International Society for
  Optics and Photonics, SPIE, 1996.

\bibitem{Tallon_Bechet_TallonBosc_LeLouran_Thiebaut_Marchetti_2011}
M.~Tallon, C.~B{\'e}chet, I.~Tallon-Bosc, M.~Le Louarn, E.~Thi{\'e}baut,
  R.~Clare, and E.~Marchetti.
\newblock {Performance of MCAO on the E-ELT using the Fractal Iterative Method
  for fast atmospheric tomography}.
\newblock In {\em Second International Conference on Adaptive Optics for
  Extremely Large Telescopes}, page~63, 2011.

\bibitem{Torres_2022}
J.~S. Torres, A.~P. Reeves, C.~Kulcs\'{a}r, H.-F. Raynaud, R.~Mata Calvo, and
  H.~F. Kelemu.
\newblock Turbulence characterization of a free space optical communication
  link for high performance adaptive optics control.
\newblock In {\em Imaging and Applied Optics Congress 2022 (3D, AOA, COSI, ISA,
  pcAOP)}, page OF2B.2. Optica Publishing Group, 2022.

\bibitem{Vera_AO4ELT7proc}
E.~Vera, F.~Guzman, C.~Weinberger, N.~Hernandez, and J.~Tapia.
\newblock {Deep Optics for Wavefront Sensing: Beyond the Pyramid Wavefront
  Sensor}.
\newblock In {\em {7th International Conference on Adaptive Optics for
  Extremely Large Telescopes, AO4ELT 2023}}, Avignon, France, 2023.

\bibitem{Veri04}
C.~V\'{e}rinaud.
\newblock On the nature of the measurements provided by a pyramid wave-front
  sensor.
\newblock {\em Optics Communications}, 233:27--38, 2004.

\bibitem{Wang2022}
K.~Wang, Q.~Kemao, J.~Di, and J.~Zhao.
\newblock {Deep learning spatial phase unwrapping: a comparative review}.
\newblock {\em Advanced Photonics Nexus}, 1(1):014001, 2022.

\bibitem{Wang_Bovik_Sheikh_Simoncelli_2004}
Z.~Wang, A.~C. Bovik, H.~R. Sheikh, and E.~P. Simoncelli.
\newblock Image quality assessment: from error visibility to structural
  similarity.
\newblock {\em IEEE Transactions on Image Processing}, 13(4):600--612, 2004.

\bibitem{Wang_Simoncelli_Bovik_2003}
Z.~Wang, E.~P. Simoncelli, and A.~C. Bovik.
\newblock Multiscale structural similarity for image quality assessment.
\newblock In {\em The Thrity-Seventh Asilomar Conference on Signals, Systems \&
  Computers, 2003}, volume~2, pages 1398--1402, 2003.

\bibitem{Zernike1934}
F.~Zernike and F.~J.~M. Stratton.
\newblock Diffraction theory of the {K}nife-{E}dge test and its improved form,
  the phase-contrast method.
\newblock {\em Monthly Notices of the Royal Astronomical Society},
  94(5):377--384, 1934.

\bibitem{AO_micro_2023}
Q.~Zhang, Q.~Hu, C.~Berlage, P.~Kner, B.~Judkewitz, M.~Booth, and N.~Ji.
\newblock Adaptive optics for optical microscopy.
\newblock {\em Biomed Opt Express}, 14(4):1732--1756, 2023.

\bibitem{Zhang2019}
T.~Zhang, S.~Jiang, Z.~Zhao, K.~Dixit, X.~Zhou, J.~Hou, Y.~Zhang, and C.~Yan.
\newblock {Rapid and robust two-dimensional phase unwrapping via deep
  learning}.
\newblock {\em Opt. Express}, 27(16):23173--23185, 2019.

\bibitem{Zhao_2023_1}
Z.~Zhao.
\newblock Phase unwrapping algorithms by solving the poisson equation.
\newblock
  \url{https://www.mathworks.com/matlabcentral/fileexchange/71810-phase-unwrapping-algorithms-by-solving-the-poisson-equation}.

\bibitem{Zhao_2023_2}
Z.~Zhao.
\newblock {Robust 2D phase unwrapping algorithm}.
\newblock
  \url{https://www.mathworks.com/matlabcentral/fileexchange/68493-robust-2d-phase-unwrapping-algorithm}.

\bibitem{Zhao_2020}
Z.~Zhao, H.~Zhang, C.~Ma, C.~Fan, and H.~Zhao.
\newblock Comparative study of phase unwrapping algorithms based on solving the
  poisson equation.
\newblock {\em Measurement Science and Technology}, 31(6):065004, 2020.

\bibitem{Zhao_2019}
Z.~Zhao, H.~Zhang, Z.~Xiao, H.~Du, Y.~Zhuang, C.~Fan, and H.~Zhao.
\newblock {Robust 2D phase unwrapping algorithm based on the transport of
  intensity equation}.
\newblock {\em Measurement Science and Technology}, 30(1):015201, 2018.

\bibitem{Zhariy_Neubauer_Rosensteiner_Ramlau_2011}
M.~Zhariy, A.~Neubauer, M.~Rosensteiner, and R.~Ramlau.
\newblock Cumulative wavefront reconstructor for the {S}hack-{H}artmann sensor.
\newblock {\em Inverse Problems and Imaging}, 5:893--913, 2011.

\end{thebibliography}
}

\end{document}